\newtheorem{thm}{Theorem}[section]
\newtheorem{lem}[thm]{Lemma}
\newtheorem{cor}[thm]{Corollary}
\theoremstyle{definition}
\let\sp\relax
\DeclareMathOperator{\sp}{sp}
\DeclareMathOperator{\so}{so}
\let\o\relax
\DeclareMathOperator{\o}{o}
\DeclareMathOperator{\SSYT}{SSYT}
\DeclareMathOperator{\SPT}{SPT}
\DeclareMathOperator{\SOT}{SOT}
\DeclareMathOperator{\OT}{OT}
\DeclareMathOperator{\sgn}{sgn}
\newcommand{\abs}[1]{\lvert#1\rvert}
\newcommand{\la}{\lambda}
\newcommand{\La}{\Lambda}
\newcommand{\x}{\mathbf{x}}
\newcommand{\X}{\mathbf{X}}
\newcommand{\ov}[1]{\overline{#1}}
\newcommand{\wh}[1]{\widehat{#1}}
\newcommand{\defn}[1]{\emph{\color{cyan!70!black} #1}}
\renewcommand{\leq}{\leqslant}
\renewcommand{\geq}{\geqslant}
\newcommand\Square[1]{+(-#1,-#1) rectangle +(#1,#1)}
\DeclareFontFamily{U}{mathx}{}
\DeclareFontShape{U}{mathx}{m}{n}{<-> mathx10}{}
\DeclareSymbolFont{mathx}{U}{mathx}{m}{n}
\DeclareMathAccent{\wc}{0}{mathx}{"71}
\begin{document}
\title[Skew characters through lattice paths]{Skew symplectic and orthogonal characters through lattice paths}
\author[S.~P.~Albion]{Seamus P.~Albion}
\author[I.~Fischer]{Ilse Fischer}
\author[H.~H\"ongesberg]{Hans H\"ongesberg}
\author[F.~Schreier-Aigner]{Florian Schreier-Aigner}
\date{}
\thanks{I.F., H.H., and F.S.-A.\ acknowledge the financial support from the Austrian Science Foundation FWF, grant P34931.}
\begin{abstract}
The skew Schur functions admit many determinantal expressions. Chief among
them are the (dual) Jacobi--Trudi formula and the Lascoux--Pragacz formula, the latter being 
a skew analogue of the Giambelli identity.
Comparatively, the skew characters of the symplectic and orthogonal groups, also
known as the skew symplectic and orthogonal Schur functions, have
received less attention in this direction.
We establish analogues of the dual Jacobi--Trudi and Lascoux--Pragacz formulae 
for these characters. Our approach is entirely combinatorial, being based on 
lattice path descriptions of the tableaux models of Koike and Terada. 
Ordinary Jacobi--Trudi formulae 
are then derived in an algebraic manner from their duals.
\end{abstract}

\maketitle
\section{Introduction}
\label{sec:intro}
The classical groups, a term coined by Weyl, are the general linear groups
over the real numbers, complex numbers and quaternions and certain subgroups thereof.
We are concerned with the complex general linear, symplectic and 
orthogonal groups, which we write as
$\mathrm{GL}(n,\mathbb{C})$, $\mathrm{Sp}(2n,\mathbb{C})$ and
$\mathrm{O}(n,\mathbb{C})$ respectively.
Each of these groups carry families of irreducible representations indexed
by partitions.
In the case of $\mathrm{GL}(n,\mathbb{C})$, these representations are 
precisely the irreducible polynomial representations whose characters 
are the Schur polynomials, which are symmetric polynomials
in $n$ variables.
For the symplectic and orthogonal groups, the irreducible characters in question
are symmetric Laurent polynomials in the variables 
$x_1,x_1^{-1},\dots,x_n,x_n^{-1}$
(also known as $\mathrm{BC}_n$-symmetric polynomials).
These characters sometimes go by the name of symplectic and orthogonal 
Schur polynomials, but we will refer to them simply as the {symplectic and
orthogonal characters.}

The Schur polynomials have several different determinantal expressions.
Among them are the {Jacobi--Trudi formula} and its dual, the 
{N\"agelsbach--Kostka identity}, which express the Schur polynomial as
(isobaric) determinants in the complete homogeneous or elementary symmetric
functions respectively.
There is also the {Giambelli formula}, which expresses the Schur
polynomial as a determinant of Schur polynomials indexed by hook-shaped 
Young diagrams.
These two types of determinantal expressions have analogues
for {skew Schur polynomials}, the skew version of the Giambelli formula being due 
to Lascoux and Pragacz \cite{LP84,LP88}.
Analogously, the symplectic and orthogonal characters were given 
Jacobi--Trudi-type expressions by Weyl \cite[Theorems~7.8.E \& 7.9.A]{Weyl39},
there expressed in terms of complete homogeneous or elementary symmetric functions with 
variables $x_1,x_1^{-1},\dots,x_n,x_n^{-1}$.
They also have Giambelli formulae, being first proved by 
Abramsky, Jahn and King \cite{AJK73}, the structure of which is
identical to the Schur case.

From a combinatorial point of view, the Schur polynomials may be defined 
as a weighted sum over {semistandard Young tableaux} or, equivalently, 
Gelfand--Tsetlin patterns. This extends easily to the skew case.
By interpreting these tableaux as families of nonintersecting lattice paths,
Gessel and Viennot \cite{GesVie89} provided a beautiful proof 
of the skew Jacobi--Trudi formulae using what is now known as the
Lindstr\"om--Gessel--Viennot lemma.
Stembridge then applied this approach to the Giambelli and Lascoux--Pragacz formulae
\cite[\S9]{Ste90}.

In \cite{FulKra97}, Fulmek and Krattenthaler set out to provide similar lattice
path proofs of the symplectic and orthogonal Jacobi--Trudi formulae and their duals
 as well as the Giambelli formulae in the straight case.
For the symplectic identities they use the tableaux of King \cite{King76}.
In the orthogonal case there are several different tableaux models, and of
these Fulmek and Krattenthaler exploit the tableaux of 
Proctor, Sundaram as well as of King and Welsh \cite{KinWel93,Pro94,Sun90b}.
The main tools in their proofs are the Lindstr\"om--Gessel--Viennot lemma
and a modified reflection principle.
They succeeded in proving the dual Jacobi--Trudi formulae for the 
symplectic and orthogonal characters in this way, and pass to the ordinary
formulae by the dual paths of Gessel and Viennot.
For the Giambelli identities, they prove the symplectic and odd orthogonal
cases.
However, they were unable to obtain a lattice path proof of the even 
orthogonal Giambelli formula. (This was due to the even orthogonal 
Sundaram-type tableaux they introduce not having an appropriate weight 
function; see the discussions in \cite[\S3.6]{BreKraWar16} and \cite[\S8]{FulKra97}.)

Compared to the skew Schur polynomials, skew analogues of the symplectic and
orthogonal characters have received little attention.
Indeed, only very recently did Jing, Li and Wang obtain Jacobi--Trudi
formulae for these characters \cite[Propositions~3.2--3.3]{skew2022},
and there only in terms of the complete symmetric functions.
The goal of the present paper is to provide dual Jacobi--Trudi-type formulae
for the skew symplectic and orthogonal characters.
We accomplish this in a purely combinatorial way, 
using an approach that is somewhat in the vein of the one used by 
Fulmek and Krattenthaler with an extension to the skew setting.
In addition, we also produce Lascoux--Pragacz-type skew analogues of the
Giambelli formulae for these characters.
What facilitates these more general formulae are the tableaux models for
skew symplectic and orthogonal characters due to Koike and Terada
\cite{KoiTer90}.
These tableaux have already proved combinatorially useful in proving 
factorisation theorems for skew symplectic and orthogonal characters in work 
of Ayyer and the second named author \cite{AyyerFischer20}.
Remarkably, our proofs are simpler than those of Fulmek and Krattenthaler,
which further emphasises that the tableaux of Koike and Terada are the
better tool in the combinatorial setting.

We begin in the next section by providing definitions and statements of our
results. In Section~\ref{sec:tableaux} we introduce the tableaux of Koike and 
Terada as well as the corresponding families of nonintersecting lattice paths. 
In the following Section~\ref{sec:dualJacobiTrudi} we give proofs
of the dual Jacobi--Trudi formulae.
In order to relate these to the results of Jing, Li and Wang, we use the 
standard algebraic approach for proving the equivalence of the ordinary
Jacobi--Trudi formula and its dual in Section~\ref{sec:JacobiTrudi}.
Following this, Section~\ref{sec:Giambelli} contains the proofs of the 
Lascoux--Pragacz-type skew analogues of the Giambelli formulae.

\section{Definitions and main results}
\label{sec:definitions}
A \defn{partition} is a weakly decreasing sequence of positive integers 
$\la=(\la_1,\ldots, \la_k)$.
We call the $\la_i$ its \defn{parts}, $l(\mu) \coloneqq k$  its \defn{length} and $\abs{\la} \coloneqq \sum_{i=1}^{k} \la_i$ its \defn{size}.
The \defn{Young diagram} of a partition $\la$ is a collection
of left-justified boxes (\defn{cells}) with $\la_i$ cells in the $i$-th row 
from the top.
For the rest of the paper, we do not distinguish between a partition and its associated Young diagram. 
The \defn{conjugate} $\la^\prime=(\la_1^\prime,\ldots,\la_{m}^\prime)$ of 
$\la$
is the partition where $\la_i^\prime$ is the number of boxes in the
$i$-th column of the Young diagram of $\la$, counted from the left.
For two partitions $\la, \mu$, we say that $\mu$ is \defn{contained} in $\la$, 
denoted by $\mu \subseteq \la$, if the Young diagram of $\mu$ can be obtained 
from the Young diagram of $\la$ by removing boxes. In this case, we
denote by $\la/\mu$ the \defn{skew shaped Young diagram} (or \defn{skew shape} 
for short) obtained by removing all boxes of the Young diagram of $\mu$ from 
the one of $\la$. The size of $\la/\mu$ is the number of boxes in $\la/\mu$ 
and denoted by $|\la/\mu|$.
We will sometimes write $(n^m)$ for the rectangular partition with $m$ parts
equal to $n$.

We are interested in irreducible characters of the classical Lie groups 
$\mathrm{GL}(n,\mathbb{C})$, $\mathrm{Sp}(2n,\mathbb{C})$ and 
$\mathrm{O}(n,\mathbb{C})$ indexed by partitions.
The definitions of these objects may be found, for instance, in
\cite[\S24]{FulHar91} or \cite[Appendix B]{Pro94},
and we only cover the essentials needed to state our results.
The characters we are interested in are all symmetric or 
$\mathrm{BC}_n$-symmetric polynomials with variables $\x\coloneqq(x_1,\dots,x_n)$, 
the rings of which we denote by $\La_n$ and $\La_n^{\mathrm{BC}}$ respectively.
For $\mathrm{GL}(n,\mathbb{C})$, the characters of the irreducible polynomial
representations are the \defn{Schur polynomials} $s_\la(\x)$ where 
$\la$ runs over all partitions of length at most $n$.
They are easily computed, for $l(\la)\leq n$, by
\begin{equation}\label{Eq_schur-def}
s_\la(\x) \coloneqq \frac{\det_{1\leq i,j\leq n}(x_i^{\la_j+n-j})}
{\det_{1\leq i,j\leq n}(x_i^{n-j})}, 
\end{equation}
setting $\la_j=0$ for $j > l(\la)$. In fact, this convention is used 
throughout the whole paper. The formula is a special case of the Weyl character formula.
If $l(\la)>n$ then $s_\la(\x) \coloneqq 0$.
From the definition it is clear that these are in fact symmetric polynomials of 
homogeneous degree $\abs{\la}$.
When $\la$ is a single row or column of $r$ boxes then the Schur polynomials
reduce to the \defn{complete homogeneous symmetric polynomials} $h_r$ and
the \defn{elementary symmetric polynomials} $e_r$ in $\x$ respectively.

For the symplectic group $\mathrm{Sp}(2n,\mathbb{C})$ we have irreducible
characters given by the Weyl formula
\[
\sp_{\la}(\x) \coloneqq
\frac{\det_{1\leq i,j\leq n}(x_i^{\la_j+n-j+1}-x_i^{-(\la_j+n-j+1)})}
{\det_{1\leq i,j\leq n}(x_i^{n-j+1}-x_i^{-(n-j+1)})},
\]
where again $l(\la)\leq n$.
The orthogonal case is little more delicate. 
For a statement $P$ we write $[P]$ for the \defn{Iverson bracket}:
$[P]=1$ if $P$ is true and $[P]=0$ otherwise. 
Then the even orthogonal group $\mathrm{O}(2n,\mathbb{C})$ has irreducible 
characters
\[
\o_{\la}(\x) \coloneqq 2^{[\la_n\not=0]}
\frac{\det_{1\leq i,j\leq n}(x_i^{\la_j+n-j}+x_i^{-(\la_j+n-j)})}
{\det_{1\leq i,j\leq n}(x_i^{n-j}+x_i^{-(n-j)})}.
\]
If $\la_n=0$ then this is also the character of the irreducible
representation of the special orthogonal group $\mathrm{SO}(2n,\mathbb{C})$
corresponding to $\la$.
However, if $\la_n\neq 0$ then the above splits into a sum of two
irreducible characters of $\mathrm{SO}(2n,\mathbb{C})$, one indexed by
$\la$ and the other by $(\la_1,\dots,\la_{n-1},-\la_n)$, which is of 
course not a partition.
In the odd orthogonal case $\mathrm{O}(2n+1,\mathbb{C})$ there is no such
distinction, and therefore we will label the irreducible characters
of $\mathrm{O}(2n+1,\mathbb{C})$ (or $\mathrm{SO}(2n+1,\mathbb{C})$) by
\[
\so_{\la}(\x) \coloneqq
\frac{\det_{1\leq i,j\leq n}(x_i^{\la_j+n-j+1/2}-x_i^{-(\la_j+n-j+1/2)})}
{\det_{1\leq i,j\leq n}(x_i^{n-j+1/2}-x_i^{-(n-j+1/2)})}.
\]
We will refer to these two sets of orthogonal characters as 
even orthogonal characters and odd orthogonal characters respectively.

Each of the above four families of characters have skew variants.
In Section~\ref{sec:tableaux} we will explicitly define each of these families 
in terms of skew tableaux, however for now let us explain where they come from.
We say two partitions \defn{interlace}, written $\mu\preccurlyeq\la$, if 
$\mu\subseteq\la$ and
\[
\la_1\geq\mu_1\geq\la_2\geq\mu_2\geq\cdots.
\]
One of the fundamental properties of the Schur polynomials is the 
\defn{branching rule} \cite[p.~72]{Macdonald}
\[
s_\la(x_1,\dots,x_n)
=\sum_{\mu\preccurlyeq\la}x_n^{\abs{\la/\mu}}s_\mu(x_1,\dots,x_{n-1}).
\]
Since $\la$ and $\mu$ interlace, the skew shape $\la/\mu$ has no
two boxes in the same column.
Iterating the branching rule $n$ times naturally leads to the notion of 
semistandard Young tableaux.
Alternatively, one could iterate only $k$ times for $1\leq k\leq n-1$.
The coefficient of $s_\mu(x_1,\dots,x_{n-k})$ in this
expansion is the skew Schur polynomial $s_{\la/\mu}(x_{n-k+1},\dots,x_n)$.
In other words, we have the more general branching rule
\[
s_\la(x_1,\dots,x_n)=\sum_{\mu\subseteq\la}
s_\mu(x_1,\dots,x_{n-k})s_{\la/\mu}(x_{n-k+1},\dots,x_n).
\]
In representation-theoretic terms, the branching rule 
describes the restriction of the irreducible representation indexed by $\la$ to 
the subgroup $\mathrm{GL}(n-1,\mathbb{C}) \times \mathrm{GL}(1,\mathbb{C})$,
or more generally to $\mathrm{GL}(n-k,\mathbb{C}) \times \mathrm{GL}(k,\mathbb{C})$.

Koike and Terada \cite{KoiTer90} carried out this same procedure for the
symplectic and orthogonal groups.
They use the branching rules of Zhelobenko \cite{Zhe62}
to define skew analogues of the symplectic and orthogonal characters.
In their most general form these characters depend on a skew shape
$\la/\mu$ and integers $n,m$ such that $l(\mu)\leq m$ and
$l(\la)\leq n+m$.
Following our notation from above, we denote these by
$\sp_{\la/\mu}^m,\so_{\la/\mu}^m$ and $\o_{\la/\mu}^m$.
These objects are, like their non-skew variants, symmetric Laurent polynomials
in $x_1,x_1^{-1}\dots,x_n,x_n^{-1}$.
While we give a combinatorial definition of these characters below, we should
mention that Jing, Li and Wang have given alternative definitions in terms
of vertex operators \cite{skew2022}.

The main goal of this paper is to prove, combinatorially, two types of 
determinantal formulae for $\sp_{\la/\mu}^m$, $\so_{\la/\mu}^m$ and
$\o_{\la/\mu}^m$: (i) dual Jacobi--Trudi-type formulae and (ii) 
Giambelli--Lascoux--Pragacz-type formulae.
The Jacobi--Trudi-type formulae are then derived from their duals in an 
algebraic manner.

For $\mu \subseteq \la$, $l(\lambda) \leq n$ and $\lambda_1 \leq N$, and the Jacobi--Trudi identity and its dual are
\begin{equation}
s_{\la/\mu}(\x)=\det_{1\leq i,j\leq n}(h_{\la_i-\mu_j-i+j}(\x))
=\det_{1\leq i,j\leq N}(e_{\la_i'-\mu_j'-i+j}(\x)), 
\end{equation}
where we remind the reader that $\x=(x_1,\dots,x_n)$ for a non-negative integer $n$ that is 
fixed throughout the paper.
Here we have stated this as an identity for the Schur polynomials, but it also
holds in the ring of symmetric functions on a countable set of variables,
in which case the $s_{\la/\mu}$ are the \defn{skew Schur functions}.

We set $\x^{\pm}\coloneqq(x_1,x_1^{-1},\dots,x_n,x_n^{-1})$
and are now ready to state our theorems.

\begin{thm} 
\label{thm:dual} 
Let $m, n, N$ be non-negative integers and $\lambda, \mu$ partitions such that
$\mu\subseteq\lambda$, $l(\mu)\leq m$, $l(\la) \leq n+m$ and $\la_1 \leq N$.
Then
\begin{subequations}\label{Eq_dual}
\begin{align} 
\label{dual_sp}
\sp^m_{\lambda/\mu} (\x)&= \det_{1 \leq i,j \leq N} \big( e_{\lambda'_i -\mu'_j-i+j}(\x^\pm) - e_{\lambda'_i + \mu'_j-i-j-2m}(\x^\pm) \big), \\
\label{dual_so}
\so^m_{\lambda/\mu} (\x)&= \det_{1 \leq i,j \leq N} \big( e_{\lambda'_i -\mu'_j-i+j}(\x^\pm) + e_{\lambda'_i + \mu'_j-i-j-2m+1}(\x^\pm) \big), \\
\label{dual_o}
\o^m_{\lambda/\mu} (\x) &=\frac{1}{2^{[m=l(\mu)]}} \det_{1 \leq i,j \leq N} \big( e_{\lambda'_i -\mu'_j-i+j}(\x^\pm) + e_{\lambda'_i + \mu'_j-i-j-2m+2}(\x^\pm) \big).
\end{align} 
\end{subequations}
\end{thm}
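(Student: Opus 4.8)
The plan is to realise each of the three characters as the generating function of the Koike–Terada tableaux of shape $\la/\mu$ introduced in Section~\ref{sec:tableaux}, to translate these tableaux into families of nonintersecting lattice paths, and then to apply the Lindström–Gessel–Viennot (LGV) lemma together with a reflection argument in order to produce the determinant. Since the right-hand sides involve elementary symmetric functions, it is the conjugate data $\la'$ and $\mu'$, i.e.\ the columns of the skew shape, that organise the combinatorics: the $i$-th column will correspond to a single lattice path whose steps are weighted by the variables in $\x^{\pm}=(x_1,x_1^{-1},\dots,x_n,x_n^{-1})$, so that one such path with prescribed endpoints has generating function an elementary symmetric polynomial $e_r(\x^{\pm})$.

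First I would fix, for each group, source points $A_i$ and sink points $B_j$ governed by $\la'_i-i$ and $\mu'_j-j$, chosen so that a family of nonintersecting paths from $\{A_i\}$ to $\{B_j\}$ is in weight-preserving bijection with the Koike–Terada tableaux; the character is then the (unsigned) nonintersecting path count. The LGV lemma rewrites this as $\det_{i,j}\bigl(P(A_i\to B_j)\bigr)$, where $P(A_i\to B_j)$ is the generating function of a single path, provided the sources and sinks are ordered so that only the identity permutation contributes nonintersecting families. The content of the theorem is thus the single-path evaluation
\[
P(A_i\to B_j)=e_{\la'_i-\mu'_j-i+j}(\x^{\pm})+\varepsilon\,e_{\la'_i+\mu'_j-i-j-2m+c}(\x^{\pm}),
\]
with $(\varepsilon,c)=(-1,0)$, $(+1,1)$, $(+1,2)$ in the symplectic, odd orthogonal and even orthogonal cases respectively.

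The heart of the argument is this single-path evaluation, and it is here that the reflection principle enters. Because $\x^{\pm}$ comes from a doubled alphabet, a path may cross a distinguished central axis, passing from the $x_i$-region to the $x_i^{-1}$-region, and the characters correspond exactly to the paths respecting the symmetry imposed by the symplectic or orthogonal form. Following the modified reflection principle, I would count all paths $A_i\to B_j$ without constraint, then cancel those meeting the forbidden wall against their reflections from a mirror source $A_i^{*}$. The reflected contribution is again a single elementary symmetric polynomial, but with index $\la'_i+\mu'_j-i-j-2m+c$; the shift $-2m+c$ records both the position of the axis (governed by $m$) and the group-dependent offset $c$ arising from the differing shifts $n-j+1$, $n-j+\tfrac12$, $n-j$ in the Weyl denominators for $\sp$, $\so$ and $\o$. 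The sign $\varepsilon$ is the sign with which a reflected path enters, negative for $\sp$ and positive in the two orthogonal cases.

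I expect the main obstacle to be the even orthogonal case, which is responsible both for the prefactor $1/2^{[m=l(\mu)]}$ and for the historical difficulty noted by Fulmek and Krattenthaler. The factor $2^{[\la_n\neq 0]}$ in the definition of $\o_{\la}$ must be matched combinatorially, and this is sensitive to whether the bottommost path touches the axis; isolating the boundary case $m=l(\mu)$, and verifying that off the surviving configurations the reflection involution remains sign-reversing and fixed-point-free, is the delicate point. Once the single-path evaluation is secured in each of the three cases, the passage from nonintersecting families to the stated determinants is the standard LGV bookkeeping.
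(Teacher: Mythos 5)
Your overall frame --- read the Koike--Terada tableaux columnwise as lattice paths, evaluate the single-path generating functions, and apply the Lindstr\"om--Gessel--Viennot lemma --- is exactly the paper's, and your treatment of the symplectic case is essentially correct: there the constraint is that paths stay weakly above $y=x-1$, and the modified reflection principle (reflecting the initial segment of an offending path in $y=x-2$) subtracts $e_{\la_i'+\mu_j'-i-j-2m}(\x^\pm)$ from the unconstrained count $e_{\la_i'-\mu_j'-i+j}(\x^\pm)$.

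The gap is in the two orthogonal cases. You attribute the positive second term to ``the sign with which a reflected path enters,'' but a reflection entering with a $+$ sign is not a cancellation and computes nothing; the reflection principle is intrinsically subtractive. In the paper the $+$ sign has a different origin: the odd and even orthogonal tableaux carry extra symbols ($\wh{i}$, resp.\ $\wc{i}$ and $\wh{i}$) which force extra step types in the path model --- diagonal steps $(1,1)$ on the line $y=x$ for $\so$, and horizontal double steps $(2,0)$ ending on $y=x$ for $\o$. For each fixed number $k$ of such special steps one obtains a \emph{difference} of two elementary symmetric functions (by converting the special steps into double vertical steps and applying the subtractive reflection twice), and summing over $k\geq 0$ telescopes to $e_{c-a}(\x^\pm)+e_{c-b-1}(\x^\pm)$ in the odd case and $e_{c-a}(\x^\pm)+e_{c-b}(\x^\pm)$ in the even case. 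Without these extra steps your path model does not even biject with the orthogonal tableaux, so the single-path evaluation you posit cannot be established as stated. A second, independent issue arises only in the even orthogonal case: the length-$2$ steps make weak and strong non-intersection inequivalent, so LGV leaves both spurious weakly non-intersecting families (an $\o$-horizontal step crossing two vertical steps) and families with ``trapped positions'' violating the $m$-even orthogonal condition; the paper cancels these two kinds of defects against each other by a further sign-reversing involution, which your proposal does not identify and which is not supplied by a vague appeal to the reflection involution being ``sign-reversing and fixed-point-free.'' (Your reading of the prefactor is close in spirit: $1/2^{[m=l(\mu)]}$ arises because when $m=\mu_1'$ the first starting point lies on $y=x$, its path generating function degenerates to the single term $e_{c-a}(\x^\pm)$, and the corresponding determinant entries count it twice.)
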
 

A main contribution of our work is to provide, as we believe, enlightening combinatorial explanations of these 
formulae. By using an algebraic approach, we can dualise these three identities and
obtain the following ordinary Jacobi--Trudi-type formulae for these characters.

\begin{thm}\label{thm:hformulae}
Let $m, n, N$ be non-negative integers and $\la,\mu$ partitions such that
$\mu\subseteq\la$, $l(\mu)\leq m$, $l(\la)\leq n+m$ and $l(\la)\leq N$.
Then
\begin{subequations}\label{Eq_hformulae}
\begin{align}\label{Eq_sph}
\sp^m_{\la/\mu}(\x)&=
\det_{1\leq i,j\leq N}\big(h_{\la_i-\mu_j-i+j}(\x^\pm)
+[j>m+1]h_{\la_i-i-j+2m+2}(\x^\pm)\big),\\
\so^m_{\la/\mu}(\x)&=
\det_{1\leq i,j\leq N}\big(h_{\la_i-\mu_j-i+j}(\x^\pm)
+[j>m]h_{\la_i-i-j+2m+1}(\x^\pm)\big), \label{Eq_soh}\\
\o^m_{\la/\mu}(\x)&=
\det_{1\leq i,j\leq N}\big(h_{\la_i-\mu_j-i+j}(\x^\pm)
-[j>m]h_{\la_i-i-j+2m}(\x^\pm)\big). \label{Eq_oh}
\end{align}
\end{subequations}
\end{thm}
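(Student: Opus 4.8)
The plan is to take Theorem~\ref{thm:dual} as given and to deduce Theorem~\ref{thm:hformulae} from it by a purely algebraic argument internal to the ring of symmetric functions in the $2n$ variables $x_1,x_1^{-1},\dots,x_n,x_n^{-1}$. Since the left-hand sides of \eqref{Eq_hformulae} already equal the corresponding $e$-determinants of \eqref{Eq_dual}, it suffices to prove three determinantal identities of the shape ``$h$-determinant $=$ $e$-determinant'', in which the characters themselves no longer appear; the only properties of $e_r(\x^\pm)$ and $h_r(\x^\pm)$ I would retain are that they vanish for negative index, that $e_r(\x^\pm)=0$ for $r>2n$, and the fundamental relation $\sum_{r\geq 0}(-1)^r e_r(\x^\pm)t^r\cdot\sum_{s\geq 0}h_s(\x^\pm)t^s=1$.

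First I would recast this relation as the statement that the two bi-infinite matrices $\mathcal{H}=(h_{i-j}(\x^\pm))_{i,j\in\Z}$ and $\mathcal{E}=((-1)^{i-j}e_{i-j}(\x^\pm))_{i,j\in\Z}$ are mutually inverse, equivalently $\sum_{r}(-1)^r e_r(\x^\pm)h_{k-r}(\x^\pm)=[k=0]$. Restricting to a large unitriangular truncation (on which the determinant is $1$) and applying the Jacobi complementary-minor theorem converts a minor of $\mathcal{E}$ on a row set $R$ and column set $C$ into a signed minor of $\mathcal{H}$ on the complementary sets. Applied to the ``main'' term $e_{\la_i'-\mu_j'-i+j}$ this is exactly the classical passage between the dual and ordinary Jacobi--Trudi formulae for skew Schur functions: complementing the index sets realises the conjugations $\la\mapsto\la'$, $\mu\mapsto\mu'$ and transposes the determinant, which is precisely why the size hypothesis changes from $\la_1\leq N$ in \eqref{Eq_dual} to $l(\la)\leq N$ in \eqref{Eq_hformulae}.

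The genuinely new ingredient is the second, ``reflected'', term in each entry. Writing $c_j=\mu_j'-j$, the $e$-entries take the form $e_{\la_i'-i-c_j}\mp e_{\la_i'-i-(2m-\varepsilon-c_j)}$, so the second summand is obtained from the first by the reflection $c_j\mapsto 2m-\varepsilon-c_j$ of the column parameter, where $\varepsilon=0,1,2$ in the symplectic, odd orthogonal and even orthogonal cases. I would apply the complementary-minor conversion simultaneously to this mirror index set: complementation sends the reflected $e$-columns to reflected $h$-columns, the reflection centre picking up the additive shifts $2m+2,\,2m+1,\,2m$ appearing in \eqref{Eq_hformulae}. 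The Iverson brackets $[j>m+1]$ and $[j>m]$ then encode exactly those columns for which the mirrored index survives, namely those $j$ with $\mu_j=0$ (equivalently $j>l(\mu)$) whose reflected entry is not already forced to vanish by $h_{<0}=0$; for the remaining columns the reflected term is absorbed into the main one or cancels.

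The step I expect to be the main obstacle is the careful bookkeeping of these reflected contributions, and in particular the even orthogonal case. There the reflection centre $m-1$ can coincide with the diagonal of admissible indices precisely when $m=l(\mu)$, which is the source of the prefactor $1/2^{[m=l(\mu)]}$ already present in the definition of $\o^m_{\la/\mu}$ and in \eqref{dual_o}; checking that this boundary collision neither double-counts a column nor disturbs the global sign is the delicate point, consistent with the even orthogonal difficulties recalled in the introduction. A cleaner but less self-contained alternative would invoke the involution $\omega$ together with the known behaviour of symplectic and orthogonal characters under it, but the determinantal route above has the advantage of using only Theorem~\ref{thm:dual} and the inverse relation $\mathcal{H}\mathcal{E}=I$.
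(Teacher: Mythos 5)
Your overall strategy is the same as the paper's: take Theorem~\ref{thm:dual} as proved, pass to the $h$-side via Jacobi's complementary-minor theorem applied to a pair of mutually inverse lower-unitriangular matrices built from the convolution identity $\sum_k(-1)^ke_{r-k}h_k=\delta_{r,0}$, and recover the bracketed conditions $[j>m+1]$, $[j>m]$ from the index bookkeeping. However, there is a genuine gap at the central step. The determinants in \eqref{Eq_dual} are \emph{not} minors of the plain matrix $\mathcal{E}=((-1)^{i-j}e_{i-j}(\x^\pm))$: each column of, say, the matrix in \eqref{dual_sp} is a difference of \emph{two} columns of $\mathcal{E}$ (the one indexed by $\mu_j'-j$ and its mirror indexed by $2m-(\mu_j'-j)$). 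The complementary-minor theorem (Lemma~\ref{Lem_cofactor}) converts a minor of $A$ into a complementary minor of $A^{-1}$; it does not apply to a determinant whose columns are linear combinations of columns of $A$, and "applying the conversion simultaneously to the mirror index set'' is not a defined operation. If you instead expand multilinearly over the columns, each choice of main/mirror column produces a different complementary set on the $h$-side, and reassembling those complementary minors into the single $h$-determinant of \eqref{Eq_hformulae} is exactly as hard as the original problem. So the step you flag as "the main obstacle'' is not mere bookkeeping: it is the proof.

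The paper closes this gap by building the reflected term into the matrices from the start: it introduces one-parameter families
$\mathcal{E}(N,m,k;t)=\big(e_{i-j}+[\,j<m+\lceil k/2\rceil\,]\,t\,e_{i+j-2m-k}\big)$ and
$\mathcal{H}(N,m,k;t)=\big((-1)^{i-j}(h_{i-j}-[\,i>m+\lfloor k/2\rfloor\,](-1)^k t\,h_{2m-i-j+k})\big)$,
and proves by a direct computation (a telescoping cancellation on top of the convolution identity) that these are lower-unitriangular and mutually inverse. Only then is Lemma~\ref{Lem_cofactor} applied, with the standard complementary index sets coming from $\{\la_i+M-i+1\}$ and $\{M+j-\la_j'\}$, yielding the two-parameter family of identities \eqref{Eq_general-dual}; the three cases of Theorem~\ref{thm:hformulae} are the specialisations $(m,k,t)\mapsto(N+m,2,-1),(N+m,1,1),(N+m,0,1)$. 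Your proposal would become a proof once you state and verify this mutual-inverse lemma for the modified matrices (or an equivalent device); without it, the passage from the two-term $e$-entries to the two-term $h$-entries is asserted rather than established. Your remarks on the surviving Iverson brackets and on the factor $1/2^{[m=l(\mu)]}$ in the even orthogonal case are consistent with how the paper's specialisation works out, but they too depend on first having the identity \eqref{Eq_general-dual} in hand.
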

For $\mu$ empty and $m=0$,
these formulae are due to Weyl
\cite[Theorems~7.8.E \& 7.9.A]{Weyl39}. 
The identities \eqref{Eq_sph} and \eqref{Eq_oh} as stated were recently 
obtained by Jing, Li and Wang \cite[Propositions~3.2--3.3]{skew2022}.

Our combinatorial approach also admits the derivation of Giambelli-type formulae, and, in 
order to formulate them, we need the \defn{Frobenius notation} of a partition.
For a partition $\lambda$, let $(p,p)$ be the diagonal cell with maximal $p$ 
which is still contained in the Young diagram of $\lambda$ 
($p$ is the size of the \defn{Durfee square}). 
For $1 \leq i \leq p$, let $\alpha_i$ be the number of cells right of 
$(i,i)$ in the same row and $\beta_i$ be the number of cells below $(i,i)$ in 
the same column.
We then write $\lambda=(\alpha_1,\dots,\alpha_p \vert \beta_1,\dots,\beta_p)$.
Using Frobenius notation for $\lambda$ and $\mu$, that is, 
$\lambda=(\alpha_1,\dots,\alpha_p \vert \beta_1,\dots,\beta_p)$ and 
$\mu=(\gamma_1,\dots,\gamma_q \vert \delta_1,\dots,\delta_q)$, the 
\defn{Lascoux--Pragacz formula} reads as
\begin{equation}\label{eq:ordinarySkewGiambelli}
s_{\lambda/\mu} (\x) = (-1)^q \det \begin{pmatrix}
			\left( s_{\left( \alpha_i \vert \beta_j \right)} (\x) \right)_{1 \leq i,j \leq p} & \left( h_{\alpha_i - \gamma_j} (\x) \right)_{\substack{1 \leq i \leq p,\\1 \leq j \leq q}}\\
			\left( e_{\beta_j - \delta_i} (\x) \right)_{\substack{1 \leq i \leq q,\\1 \leq j \leq p}} & 0
		\end{pmatrix}. 
\end{equation}
This was shown by Lascoux and Pragacz in \cite{LP84,LP88}.
The $\mu$ empty case is much older, being due to Giambelli 
\cite{Giambelli1903}, and is therefore known as the \defn{Giambelli identity}.
We provide the following analogues.

\begingroup
\allowdisplaybreaks
\begin{thm}\label{thm:giambelli}
	Let $\lambda=(\alpha_1,\dots,\alpha_p \vert \beta_1,\dots,\beta_p)$ and $\mu=(\gamma_1,\dots,\gamma_q \vert \delta_1,\dots,\delta_q)$ be two partitions such that $\mu \subseteq \lambda$, $l(\mu)\leq m$ and $l(\la)\leq n+m$. 
Then
\begin{subequations}\label{Eq_giambelli}
\begin{align}
\sp^m_{\lambda/\mu}(\x) &= (-1)^q \det \begin{pmatrix}
\big(\sp^m_{(\alpha_i \vert \beta_j)}(\x)\big)_{1 \leq i,j \leq p} & 
\big(\sp_{(\alpha_i)/(\gamma_j)}^m(\x)
\big)_{\substack{1 \leq i \leq p,\\1 \leq j \leq q}}\\
\big(\sp_{(1^{\beta_j+1})/(1^{\delta_i+1})}^m(\x)
\big)_{\substack{1 \leq i \leq q,\\1 \leq j \leq p}} & 0
\end{pmatrix},\label{Eq_spgiambelli} \\[2mm]
		\so^m_{\lambda/\mu}(\x) &= (-1)^q \det \begin{pmatrix}
\big( \so^m_{(\alpha_i \vert \beta_j)}(\x)\big)_{1 \leq i,j \leq p} & 
\big(\so_{(\alpha_i)/(\gamma_j)}^m(\x)
\big)_{\substack{1 \leq i \leq p,\\1 \leq j \leq q}}\\
\big(\so_{(1^{\beta_j+1})/(1^{\delta_i+1})}^m(\x)
\big)_{\substack{1 \leq i \leq q,\\1 \leq j \leq p}} & 0
		\end{pmatrix}, \label{Eq_sogiambelli}\\[2mm]
\o^m_{\lambda/\mu}(\x) &= 
(-1)^q\det \begin{pmatrix}
\big(\o^m_{(\alpha_i \vert \beta_j)}(\x) \big)_{1 \leq i,j \leq p} & 
\big(\o_{(\alpha_i)/(\gamma_j)}^m(\x)
\big)_{\substack{1 \leq i \leq p,\\1 \leq j \leq q}}\\
\big(\o_{(1^{\beta_j+1})/(1^{\delta_i+1})}^m(\x)
\big)_{\substack{1 \leq i \leq q,\\1 \leq j \leq p}} & 0
		\end{pmatrix}.\label{Eq_ogiambelli}
\end{align} 
\end{subequations}
\end{thm}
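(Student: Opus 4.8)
The plan is to prove all three identities by the same lattice path method that underlies the dual Jacobi--Trudi formulae of Theorem~\ref{thm:dual}, but with the sources and sinks reorganised along the main diagonal according to the Frobenius coordinates; this is the symplectic/orthogonal analogue of Stembridge's lattice path proof of the Lascoux--Pragacz formula. First I would start from the nonintersecting lattice path model for the Koike--Terada tableaux developed in Section~\ref{sec:tableaux}, in which $\sp^m_{\la/\mu}(\x)$ (and likewise $\so^m_{\la/\mu}$ and $\o^m_{\la/\mu}$) is the generating function of families of pairwise nonintersecting, possibly reflected, paths whose initial and terminal points are read off from the skew shape $\la/\mu$. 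Instead of indexing the paths by rows or columns, as one does for the Jacobi--Trudi determinants, I would re-index them by the $p+q$ diagonal cells coming from the Durfee squares of $\la$ and $\mu$: each diagonal cell of $\la$ contributes a source at the foot of its leg $\beta_i$ together with a sink at the end of its arm $\alpha_i$, while each diagonal cell of $\mu$ contributes a complementary source/sink pair governed by $\gamma_j$ and $\delta_j$.

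With the sources and sinks fixed, the second step is to apply the Lindström--Gessel--Viennot lemma, in the reflectable form appropriate to these paths, so as to write the character as a signed sum over all ways of pairing sources with sinks, that is, as a $(p+q)\times(p+q)$ determinant whose $(i,j)$ entry is the generating function of a single path from the relevant source to the relevant sink. The third step is to evaluate these single-path generating functions block by block. A single path joining a leg-source of $\la$ to an arm-sink of $\la$ meets the diagonal exactly once and sweeps out a hook, so its generating function is precisely the hook character $\sp^m_{(\alpha_i\vert\beta_j)}(\x)$, yielding the top-left block; a path from a diagonal cell of $\mu$ to an arm-sink of $\la$ can only run horizontally, sweeping out a single skew row and contributing $\sp^m_{(\alpha_i)/(\gamma_j)}(\x)$, while dually a path to a leg-sink sweeps out a single skew column and contributes $\sp^m_{(1^{\beta_j+1})/(1^{\delta_i+1})}(\x)$. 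No admissible monotone path can join a leg-source of $\mu$ to an arm-sink of $\mu$, which is exactly why the bottom-right block vanishes, and the prefactor $(-1)^q$ records the reindexing permutation needed to pair the $q$ diagonal cells of $\mu$ across the two sides.

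I would carry this out first for the symplectic case, where the reflection rule for the paths is cleanest, and then adapt the argument to the odd and even orthogonal cases by changing only the boundary weight of a reflected step, exactly as the three entries of Theorem~\ref{thm:dual} differ only in the sign and shift of their reflected term. The hard part will be the even orthogonal case: this is precisely the identity that Fulmek and Krattenthaler could not reach by lattice paths, the obstruction being the factor $2^{[\la_n\neq 0]}$ built into $\o_\la$ and the associated $2^{-[m=l(\mu)]}$ appearing in the dual formula~\eqref{dual_o}. I would need to verify that these powers of $2$, now carried individually by the hook and skew entries $\o^m_{(\alpha_i\vert\beta_j)}$, $\o^m_{(\alpha_i)/(\gamma_j)}$ and $\o^m_{(1^{\beta_j+1})/(1^{\delta_i+1})}$, combine across the determinant so as to leave no prefactor in~\eqref{Eq_ogiambelli}; reconciling this normalisation, together with checking that the reflected nonintersecting families biject correctly with the Koike--Terada tableaux of shape $\la/\mu$ (so that the determinant genuinely equals the character rather than some rescaling), is where I expect the real work to lie.
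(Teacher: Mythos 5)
Your plan is essentially the paper's own proof: read the Koike--Terada tableaux off along principal hooks with sources and sinks $A_i,B_i$ (for $\la$) and $C_j,D_j$ (for the broken hooks of $\mu$), apply the Lindstr\"om--Gessel--Viennot lemma to get the $(p+q)\times(p+q)$ determinant with sign $(-1)^q$, identify the blocks as hook characters, single-row and single-column skew characters, and a zero block, and then handle the even orthogonal case separately. The one point worth adjusting is your diagnosis of where the even orthogonal difficulty lies: the powers of $2$ reconcile almost immediately (the factor $2^{-[m=l(\mu)\wedge m\neq 0]}$ is simply absorbed into the entries $\o^m_{(1^{\beta_j+1})/(1^{\delta_i+1})}$), and the real work is the sign-reversing involution cancelling families with trapped positions against families where an $\o$-horizontal step crosses two vertical steps, which now comes in three local variants because a trapped position can sit in the $e$-labelled region, on the boundary, or in the $h$-labelled region.
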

\endgroup
As mentioned in the introduction, for $\mu$ empty these formulae reduce to
those of Abramsky, Jahn and King \cite{AJK73}.
Like their formulae, ours have the exact same structure as in the Schur case
since $s_{(1^{\beta_j+1})/(1^{\delta_i+1})}(\x)=e_{\beta_j-\delta_i}(\x)$ and
$s_{(\alpha_i)/(\gamma_j)}(\x)=h_{\alpha_i-\gamma_j}(\x)$.

Weyl's Jacobi--Trudi formulae for the symplectic and orthogonal characters 
are used by Koike and Terada to define the \defn{universal characters} for 
these groups \cite{KoiTer87}.
This is achieved by ``forgetting'' the variables $\x^\pm$ in
either \eqref{Eq_dual} and \eqref{Eq_hformulae} and then treating the
determinants as polynomials in the $e_r$ or $h_r$ respectively,
and thus as elements of the ring of symmetric functions at an arbitrary 
alphabet.
By specialising the arbitrary alphabet to $\x^\pm$ the actual characters
are recovered.
Many identities between the general linear characters (Schur functions) and
the orthogonal and symplectic characters are derived by Koike and Terada 
using only the universal characters.
It would be interesting to investigate whether using \eqref{Eq_dual} and 
\eqref{Eq_hformulae} to define skew analogues of the universal characters
leads to similar ``universal'' proofs of identities such as, for example,
 branching rules.

Note that Hamel \cite{Ham97} has given determinantal formulae 
for skew analogues of the symplectic and odd orthogonal characters using 
outside decompositions as introduced in \cite{HG95}.
However, the skew tableaux she defines are different to those of 
Koike and Terada, and so are the associated skew characters.

\section{Tableaux and lattice paths}
\label{sec:tableaux}
In this section, we introduce the underlying combinatorial models for $s_{\la/\mu}, \sp^m_{\la/\mu}, \so^m_{\la/\mu}$ and  $\o^m_{\la/\mu}$
in terms of tableaux.
There are several possibilities for the models underlying $\so^m_{\la/\mu}$ and  $\o^m_{\la/\mu}$, and we choose those defined by Koike and Terada in \cite{KoiTer90}. In the non-skew case, all of the various combinatorial models and their equivalences may be found in \cite{Pro94} (also see \cite{CamSto13}). We then also introduce the corresponding families of non-intersecting lattice paths.

\subsection{Semistandard Young tableaux}
\label{sec:SSYTs}

Let $\mu \subseteq \la$ be two partitions. A \defn{semistandard Young tableau} of shape $\la / \mu$ is a filling of the cells of the Young diagram $\la/\mu$ with positive integers such that the entries increase weakly along rows and strictly down columns; see Figure~\ref{fig:SSYT} for an example. We denote by $\SSYT_{\la/\mu}^n$ the set of semistandard Young tableaux of shape $\la/\mu$ and maximal entry $n$. The weight $\x^T$ of a semistandard Young tableau $T$ is defined as the monomial
\[
\x^T = x_1^{\#\text{ of 1's in $T$}}\cdots x_n^{\#\text{ of $n$'s in $T$}}.
\]
The \defn{skew Schur polynomial} $s_{\la/\mu}(\x)$ corresponding to the shape $\la/\mu$ is defined as the multivariate generating function of semistandard Young tableaux of shape $\la/\mu$, i.e.,
\[
s_{\la/\mu}(\x) = \sum_{T \in \SSYT_{\la/\mu}^n}\x^T.
\]

Our path models depend on arbitrary integers $N \geq \lambda_1$ and $n \geq l(\la)$. 
A \defn{family of Schur paths} associated with the shape $\la/\mu$ is a family of $N$ non-intersecting lattice paths with starting points $S_i=(\mu_i^\prime-i+1,2 l(\mu)-\mu_i^\prime+i-1)$, end points $E_j=(\lambda_j^\prime-j+1,n-\la_j^\prime+j-1+2l(\mu))$ for $1 \leq i,j \leq N$, where also here we set $\la_j^\prime=0$ and $\mu_i^\prime=0$ for $j>l(\la^\prime)$
and $i>l(\mu')$, and with step set $\{(1,0),(0,1)\}$. The weight of a family of paths is the product of the weights of its steps, where the $i$-th step of a path has weight $x_i$ if it is horizontal and $1$ otherwise. Phrased differently, the weight of a horizontal step starting at $(a,b)$ has weight $x_{a+b-2 l(\mu)+1}$ (this is called the \defn{$e$-labelling}).

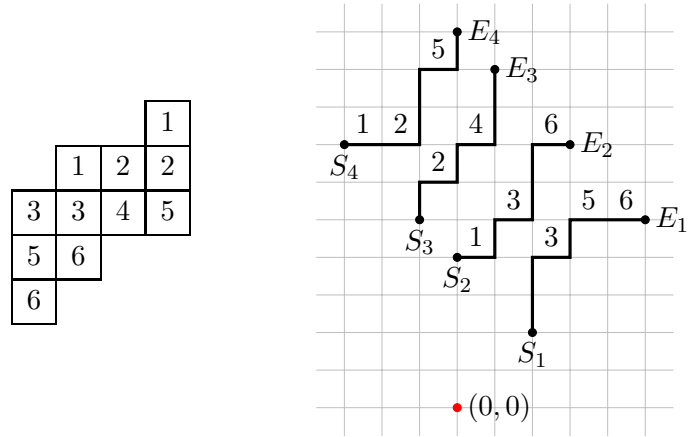
\begin{figure}[htb]
	\centering
	\begin{ytableau}
	\none & \none & \none & 1\\
	\none & 1 & 2 & 2\\
	 3 & 3 & 4 & 5\\
	 5 & 6 \\
	 6 
	\end{ytableau}
	\qquad\qquad
	\begin{tikzpicture}[scale=.5,baseline=(current bounding box.center)]
	\draw[help lines,step=1cm, lightgray] (-3.75,-0.75) grid (5.75,10.75);
	\filldraw[red] (0,0) circle (3pt) node[right]{\color{black}$(0,0)$};
	\filldraw[black] (2,2) node[below]{\color{black}$S_1$} circle (3pt);
	\filldraw[black] (0,4) node[below]{\color{black}$S_2$} circle (3pt);
	\filldraw[black] (-1,5) node[below]{\color{black}$S_3$} circle (3pt);
	\filldraw[black] (-3,7) node[below]{\color{black}$S_4$} circle (3pt);
	\filldraw[black] (5,5) node[right]{\color{black}$E_1$} circle (3pt);
	\filldraw[black] (3,7) node[right]{\color{black}$E_2$} circle (3pt);
	\filldraw[black] (1,9) node[right]{\color{black}$E_3$} circle (3pt);
	\filldraw[black] (0,10) node[right]{\color{black}$E_4$} circle (3pt);
	\draw[very thick] (2,2) -- (2,4) --node[above]{$3$} (3,4) -- (3,5) --node[above]{$5$}  (4,5) --node[above]{$6$}  (5,5);
	\draw[very thick] (0,4) --node[above]{$1$} (1,4) -- (1,5) --node[above]{$3$} (2,5) -- (2,7) --node[above]{$6$} (3,7);
	\draw[very thick] (-1,5) -- (-1,6) --node[above]{$2$} (0,6) -- (0,7) --node[above]{$4$} (1,7) -- (1,8) -- (1,9);
	\draw[very thick] (-3,7) --node[above]{$1$} (-2,7) --node[above]{$2$} (-1,7) -- (-1,9) --node[above]{$5$} (0,9) -- (0,10);
	\end{tikzpicture}
	\caption{A semistandard Young tableau of shape $(4,4,4,2,1)/(3,1)$ (left) and the corresponding family of Schur paths (right) for $N=4$ and $n=6$.}
	\label{fig:SSYT}
\end{figure}

The bijection between semistandard Young tableaux $T$ of shape $\la/\mu$ and families of Schur paths associated with the shape $\la/\mu$ is as follows. The path starting at $S_i$ corresponds to the $i$-th column of $T$ and the $j$-th step in the path is a horizontal step if and only if $j$ appears as a filling in the $i$-th column. See Figure~\ref{fig:SSYT} for an example.

\subsection{Skew \texorpdfstring{$(n,m)$}{(n,m)}-symplectic semistandard Young tableaux}
\label{sec:spYT}
Let $\mu \subseteq \lambda$ be two partitions and $n,m$ integers with 
$l(\mu)\leq m$ and $l(\la)\leq n+m$.
A \defn{skew $(n,m)$-symplectic semistandard Young tableau} (or \defn{skew $(n,m)$-symplectic tableau}) of skew shape $\la/\mu$ is a semistandard Young tableau of skew shape $\la/\mu$  which is filled by 
\[
\ov{1} < 1 < \ov{2} < 2 < \cdots < \ov{n} < n,
\]
and satisfies the
\begin{itemize}
\item \defn{$m$-symplectic condition}: the entries in row $m+i$ are at least $\ov{i}$.
\end{itemize}
We denote by $\SPT^{(n,m)}_{\la/\mu}$ the set of skew $(n,m)$-symplectic tableaux of shape $\la/\mu$. For a skew $(n,m)$-symplectic tableau $T$, the weight $\x^T$ is defined as
\[
\x^T =\prod_{i=1}^n x_i^{(\#\text{ of  $i$'s in $T$}) -(\#\text{ of $\ov{i}$'s in $T$})}.
\]
The \defn{skew $m$-symplectic character} $\sp^m_{\la/\mu}(\x)$ is defined as the multivariate generating function of skew $(n,m)$-symplectic tableaux of shape $\la/\mu$:
\[
\sp^m_{\la/\mu}(\x) = \sum_{T \in \SPT^{(n,m)}_{\la/\mu}}\x^T.
\]

For our path model, we again fix an integer $N \geq \lambda_1$.
A \defn{family of $(n,m)$-symplectic paths} associated with the shape $\la/\mu$  a family of $N$ non-intersecting lattice paths with starting points $S_i=(\mu_i^\prime-i+1,2 m-\mu_i^\prime+i-1)$ end points $E_j=(\lambda_j^\prime-j+1,2n+2m-\la_j^\prime+j-1)$ for $1 \leq i,j \leq N$. The step set of these paths is $\{(1,0),(0,1)\}$, and additionally each path must stay weakly above the line $y=x-1$. The weight of a vertical step is $1$ and a horizontal step has weight $x_i$ if it is the $(2i)$-th step of a path and weight $x_i^{-1}$ if it is the $(2i-1)$-st step of a path. Equivalently, the weight of a horizontal step starting at $(a,b)$ is $x_{(a+b)/2-m+1}^{-1}$ if $a+b$ is even and $x_{(a+b+1)/2-m}$ if $a+b$ is odd. 

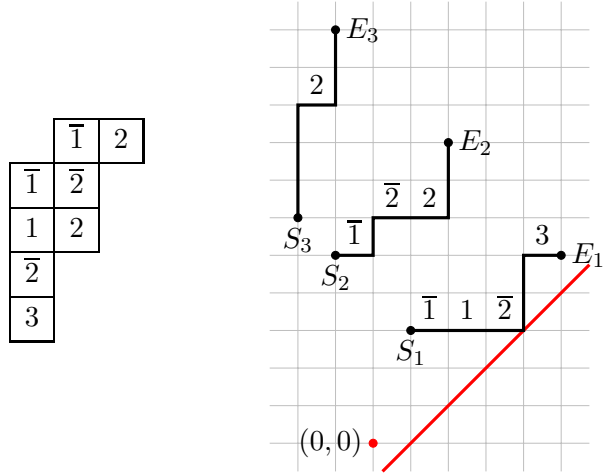
\begin{figure}[htb]
	\centering
	\begin{ytableau}
	\none & \ov{1} &2\\
	\ov{1} & \ov{2}\\
	1 & 2 \\
	\ov{2} \\
	3
	\end{ytableau}
	\qquad\qquad
	\begin{tikzpicture}[scale=.5,baseline=(current bounding box.center)]
	\draw[help lines,step=1cm, lightgray] (-2.75,-.75) grid (5.75,11.75);
	\draw[very thick, red] (.25,-.75) -- (5.75,4.75);
	\filldraw[red] (0,0) circle (3pt) node[left]{\color{black}$(0,0)$};
	\filldraw[black] (1,3) circle (3pt) node[below]{\color{black}$S_1$};
	\filldraw[black] (-1,5) circle (3pt) node[below]{\color{black}$S_2$};
	\filldraw[black] (-2,6) circle (3pt) node[below]{\color{black}$S_3$};
	\filldraw[black] (5,5) circle (3pt) node[right]{\color{black}$E_1$};
	\filldraw[black] (2,8) circle (3pt) node[right]{\color{black}$E_2$};
	\filldraw[black] (-1,11) circle (3pt) node[right]{\color{black}$E_3$};
	\draw[very thick] (1,3) --node[above]{$\ov 1$} (2,3)  --node[above]{$1$} (3,3) --node[above]{$\ov 2$} (4,3) -- (4,5) --node[above]{$3$} (5,5);
	\draw[very thick] (-1,5) --node[above]{$\ov 1$} (0,5) --(0,6) -- node[above]{$\ov 2$} (1,6)-- node[above]{$2$} (2,6) -- (2,8);
	\draw[very thick] (-2,6) -- (-2,9) --node[above]{$2$} (-1,9)-- (-1,11);
	\end{tikzpicture}
	\caption{A skew $(3,2)$-symplectic tableau of shape $(3,2,2,1,1)/(1)$ (left) and its associated family of $(3,2)$-symplectic paths (right).}
	\label{fig:symplectic tableau}
\end{figure}

There is a  bijection between skew $(n,m)$-symplectic tableaux and families of $(n,m)$-symplectic paths that is very similar to the one in the Schur case. Namely, given a skew $(n,m)$-symplectic  tableau $T$, the path starting at $S_i$ of the corresponding family of symplectic paths is obtained by associating the $j$-th step of the path with the $j$-th element in the sequence 
$\ov{1},1,\ov{2},2,\ldots$ and letting a step being horizontal if and only if the corresponding element in the sequence appears in the $i$-th column. 
An example is given in Figure~\ref{fig:symplectic tableau}.

\subsection{Skew \texorpdfstring{$(n,m)$}{(n,m)}-odd orthogonal semistandard Young tableaux}
\label{sec:soYT}
Let $\la,\mu$ be partitions such that $\mu\subseteq\la$, $l(\mu)\leq m$ and
$l(\la)\leq n+m$.
A \defn{skew $(n,m)$-odd orthogonal semistandard Young tableau} (or \defn{skew $(n,m)$-odd orthogonal tableau}) associated with the shape $\la/\mu$ is a skew semistandard Young tableau of shape $\la/\mu$ which is filled by 
\[
\wh{1} < \ov{1} < 1 < \cdots < \wh{n} < \ov{n} < n
\]
and satisfies
\begin{itemize}
\item the \defn{modified $m$-symplectic condition}: the entries in row $m+i$ are at least $\wh{i}$, and 
\item the \defn{$m$-odd orthogonal condition}: the symbol $\wh{i}$ can only appear in the first column of the $(m+i)$-th row.
\end{itemize}
Note that, unlike in \cite{KoiTer90}, we use $\wh{i}$ instead of $\musNatural{}_i$.
Denote by $\SOT_{\la/\mu}^{(n,m)}$ the set of skew $(n,m)$-odd orthogonal  tableaux of shape $\la/\mu$. We define the weight $\x^T$ of a tableau $T \in \SOT_{\la/\mu}^m$ as
\[
\x^T =\prod_{i=1}^n x_i^{(\#\text{ of $i$'s in $T$}) -(\#\text{ of $\ov{i}$'s in $T$})}.
\]
The \defn{skew $m$-odd orthogonal character} $\so_{\la/\mu}^m(\x)$ is the multivariate generating function of skew $(n,m)$-odd orthogonal tableaux of shape $\la/\mu$:
\[
\so_{\la/\mu}^m(\x) = \sum_{T \in \SOT_{\la/\mu}^{(n,m)}}\x^T.
\]

A \defn{family of $(n,m)$-odd orthogonal paths} associated with the shape  $\la/\mu$ is a family of non-intersecting $(n,m)$-symplectic paths for which we extend the step set by a diagonal step $(1,1)$ which can only occur when the starting point is of the form $(i,i)$. The weight of a family of $(n,m)$-odd orthogonal paths is the product of the weights of all steps, where horizontal and vertical steps have the same weight as in the symplectic case and diagonal steps have weight $1$. 

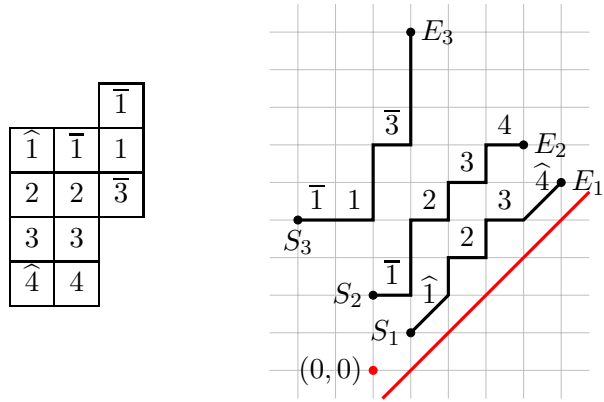
\begin{figure}[htb]
	\centering
				\begin{ytableau}
					\none & \none & \ov 1\\
					\wh{1} & \ov 1 & 1 \\
					2 & 2 & \ov 3\\
					3 & 3 \\
					\wh{4} & 4
				\end{ytableau}
			\qquad\qquad
			\begin{tikzpicture}[baseline=(current bounding box.center),scale=.5]
				\draw[help lines,step=1cm, lightgray] (-2.75,-.75) grid (5.75,9.75);
				\draw[very thick, red] (.25,-.75) -- (5.75,4.75);
				\filldraw[red] (0,0) circle (3pt) node[left]{\color{black}$(0,0)$};
				\filldraw[black] (1,1) circle (3pt)node[left]{\color{black}$S_1$};
				\filldraw[black] (0,2) circle (3pt)node[left]{\color{black}$S_2$};
				\filldraw[black] (-2,4) circle (3pt)node[below]{\color{black}$S_3$};
				\filldraw[black] (5,5) circle (3pt)node[right]{\color{black}$E_1$};
				\filldraw[black] (4,6) circle (3pt)node[right]{\color{black}$E_2$};
				\filldraw[black] (1,9) circle (3pt)node[right]{\color{black}$E_3$};
				\draw[very thick] (1,1) --node[above]{$\wh 1$} (2,2) -- (2,3) --node[above]{$2$} (3,3) -- (3,4) --node[above]{$3$} (4,4)  --node[above]{$\wh 4$} (5,5);
				\draw[very thick] (0,2) --node[above]{$\ov 1$} (1,2) -- (1,4) --node[above]{$2$} (2,4) -- (2,5) --node[above]{$3$} (3,5) -- (3,6) --node[above]{$4$} (4,6);
				\draw[very thick] (-2,4) --node[above]{$\ov 1$} (-1,4) --node[above]{$1$} (0,4) -- (0,6) --node[above]{$\ov 3$} (1,6) -- (1,9);
		\end{tikzpicture}
		\caption{A skew $(4,1)$-odd orthogonal tableau of shape $(3,3,3,2,2)/(2)$ (left) and its associated family of $(4,1)$-odd orthogonal paths (right).}
		\label{fig:so tableau}
\end{figure}

We obtain a weight preserving bijection from skew $(n,m)$-odd orthogonal tableaux of shape $\la/\mu$ to families of $(n,m)$-odd orthogonal paths associated with the shape $\la/\mu$ as follows. We follow the same procedure as in the symplectic setting by interpreting first each $\wh{i}$ entry as $\ov{i}$. Note that each horizontal step corresponding to an $\wh{i}$ entry ends at the line $y=x-1$ and therefore has to be followed by a vertical step. Now replace each of these horizontal steps coming from an $\wh{i}$ entry and its following vertical step by a diagonal step. See Figure~\ref{fig:so tableau} for an example.

\subsection{Skew \texorpdfstring{$(n,m)$}{(n,m)}-even orthogonal  semistandard Young tableaux}
\label{sec:oYT}

As before, let $\mu\subseteq\la$ be partitions such that $l(\mu)\leq m$ and 
$l(\la)\leq n+m$. A \defn{skew $(n,m)$-even orthogonal  semistandard Young tableau} (or \defn{skew $(n,m)$-even orthogonal  tableau}) of shape $\la/\mu$ is a semistandard Young tableau of shape $\la/\mu$ which is filled by the symbols
\[
\wc{1} < \wh{1} < \ov{1} < 1 < \cdots <\wc{n} < \wh{n} < \ov{n} < n
\]
and satisfies  
\begin{itemize}
\item the \defn{modified $m$-symplectic condition}: the entries in row $m+i$ are at least $\wh{i}$,
\item the \defn{modified $m$-odd orthogonal condition}: the symbol $\wh{i}$ can only appear in the first column of the $(m+i)$-th row and it appears if and only if the entry above is $\wc{i}$, and 
\item the \defn{$m$-even orthogonal condition}: if $\ov{i}$ appears in the first column of the $(m+i)$-th row and $i$ also appears in the same row, then there is an $\ov{i}$ immediately above this $i$ entry.
\end{itemize}
Note that, unlike in \cite{KoiTer90}, we use the symbols $\wc{i}$ and $\wh{i}$ instead of $\musSharp{}_i$  $\musFlat{}_i$; this corresponds to interchanging the roles of $\wc{i}$ and $\wh{i}$ if compared to \cite{AyyerFischer20}. 

We denote the set of skew $(n,m)$-even orthogonal  tableaux of shape $\la/\mu$ by $\OT_{\la/\mu}^{(n,m)}$. The weight $\x^T$ of a skew $(n,m)$-even orthogonal tableau $T$ is defined as
\[
\x^T =\prod_{i=1}^n x_i^{(\#\text{ of } i \text{ entries in }T) -(\#\text{ of } \ov{i} \text{ entries in }T)}.
\]
The \defn{skew $m$-even orthogonal character} $\o_{\la/\mu}^m(\x)$ is the multivariate generating function of skew $(n,m)$-even orthogonal tableaux of shape $\la/\mu$, so
\[
\o_{\la/\mu}^m (\x)= \sum_{T \in \OT_{\la/\mu}^{(n,m)}}\x^T.
\]

We say that a family of lattice paths is \defn{strongly non-intersecting} if there is no intersection between any pair of paths when considering them 
as subsets of $\mathbb{R}^2$. (For our application of the Lindstr\"om--Gessel--Viennot Lemma~\ref{lem:LGV}, we will also need the 
notion of \emph{weakly non-intersecting} in Section~\ref{sec:LGV}, where we only forbid intersections at lattice points that are endpoints of steps in both paths.)

A \defn{family of $(n,m)$-even orthogonal paths} associated with the shape $\la/\mu$ is a family of strongly non-intersecting symplectic paths for which we extend the step set by a horizontal step $(2,0)$, called an \defn{$\o$-horizontal step}, which can only occur starting at a point of the form $(i-2,i)$, and where the family of paths does not have any trapped position as defined below. 
We draw $\o$-horizontal steps as arcs to avoid confusion.
\begin{figure}[htb]
\centering
\begin{tikzpicture}[scale=.5]
	\draw[help lines,step=1cm, lightgray] (-2.75,.25) grid (2.75,5.75);
	\draw[very thick, red] (1.25,.25) -- (2.75,1.75);
	\draw[very thick] (1,1) -- (2,1) -- (2,2);
	\draw[very thick] (0,2) -- (1,2) -- (1,3);
	\draw[very thick] (-1,3) -- (0,3) -- (0,4);
	\draw[very thick] (-2,4) -- (-2,5) -- (-1,5);
	\filldraw[red] (-1,4) circle (3pt);
\end{tikzpicture}
\caption{The local configuration around a trapped position which is marked as a red dot.}
\label{fig:trapped position}
\end{figure}
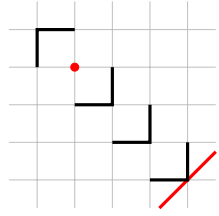

For positive integers $i,d$, we call the position $(m+i-d,m+i+d-1)$ \defn{trapped} if the following is satisfied:
\begin{itemize} 
\item  the lattice point $(m+i-d,m+i+d-1)$ is not contained in any path,
\item for each $d' \in \{0,\ldots,d-1\}$, there is a lattice path which passes through $(m+i-d',m+i+d'-1)$ by a horizontal step followed by a vertical step,
and
\item there is a path passing through the point $(m+i-d-1,m+i+d)$ by a vertical step followed by a horizontal step. 
\end{itemize} 
See Figure~\ref{fig:trapped position} for an example with $d=3$.

We now describe the weight preserving bijection from $(n,m)$-even orthogonal tableaux 
of shape $\la/\mu$ to families of $(n,m)$-even orthogonal paths associated with the same shape that are strongly non-intersecting. First we interpret each $\wc{i}$ as $\ov i$ and each $\wh{i}$ as $i$ and apply the map from $(n,m)$-symplectic tableaux to families of $(n,m)$-symplectic paths. Then we replace the pairs of horizontal steps associated with $\wc{i},\wh{i}$ coming from the 
modified $m$-odd orthogonal condition by an $\o$-horizontal step. For an example see Figure~\ref{fig:o tableau}.
In order to see that this map is a bijection it suffices to check that a tableau contradicts the $m$-even orthogonal condition if and only if the corresponding family of paths has a trapped position, which is done next.

\begin{figure}[htb]
	\centering
	\begin{ytableau}
		\none & \none & \ov 1 & 1\\
		\ov 1 & \ov 1 & 1 & \ov 2\\
		\wc{3} & \ov 3 &  3 & 4\\
		\wh{3} & \ov 4 \\
		\ov 4 & 4
	\end{ytableau}
	\qquad\qquad
	\begin{tikzpicture}[scale=.5,baseline=(current bounding box.center)]
		\draw[help lines,step=1cm, lightgray] (-3.75,-.75) grid (5.75,10.75);
		\draw[very thick, red] (.25,-.75) -- (5.75,4.75);
		\filldraw[red] (0,0) circle (3pt) node[left]{\color{black}$(0,0)$};
		\filldraw[black] (1,1) circle (3pt)node[left]{\color{black}$S_1$};
		\filldraw[black] (0,2) circle (3pt)node[left]{\color{black}$S_2$};
		\filldraw[black] (-2,4) circle (3pt)node[below]{\color{black}$S_3$};
		\filldraw[black] (-3,5) circle (3pt)node[below]{\color{black}$S_4$};
		\filldraw[black] (5,5) circle (3pt)node[right]{\color{black}$E_1$};
		\filldraw[black] (4,6) circle (3pt)node[right]{\color{black}$E_2$};
		\filldraw[black] (1,9) circle (3pt)node[right]{\color{black}$E_3$};
		\filldraw[black] (0,10) circle (3pt)node[right]{\color{black}$E_4$};
		\draw[very thick] (1,1) --node[above]{$\ov 1$} (2,1) -- (2,4) to[out=45, in=135] (4,4)  --node[above]{$\ov 4$} (5,4) -- (5,5);
		\node at (2.5,5) {$\wc{3}$};
		\node at (3.5,5) {$\wh{3}$};
		\draw[very thick] (0,2) --node[above]{$\ov 1$} (1,2) -- (1,5) --node[above]{$\ov 3$} (2,5) -- (2,6) --node[above]{$\ov 4$}(3,6) --node[above]{$4$} (4,6);
		\draw[very thick] (-2,4) --node[above]{$\ov 1$} (-1,4) --node[above]{$1$} (0,4) -- (0,7) --node[above]{$3$} (1,7) -- (1,9);
		\draw[very thick] (-3,5) -- (-3,6) --node[above]{$1$} (-2,6) -- node[above]{$\ov 2$}(-1,6)--(-1,10)--node[above]{$4$} (0,10);
	\end{tikzpicture}
	\caption{A skew $(4,1)$-even orthogonal tableau of shape $(4,4,4,2,2)/(2)$ (left) and its associated family of $(4,1)$-even orthogonal paths (right).}
	\label{fig:o tableau} 
\end{figure}
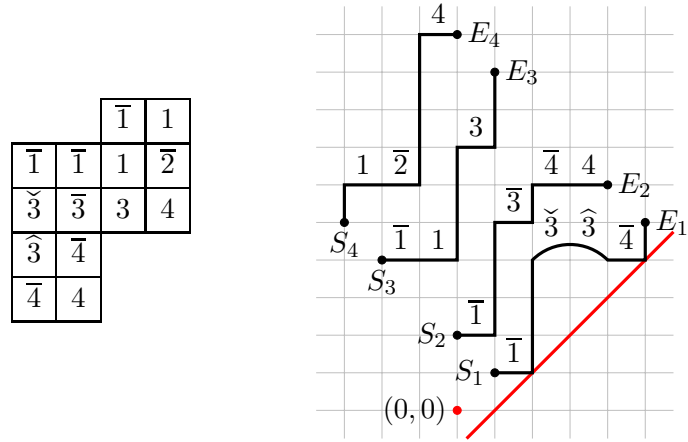

Assume that $T$ is a tableau contradicting the $m$-even orthogonal-condition, i.e., there exists an integer $i$ such that the $(m+i)$-th row starts with an entry $\ov{i}$ and also contains an entry $i$ whose entry above is not $\ov{i}$. It is immediate that the top neighbour of the first $i$ in this row can not be $\ov{i}$ since this would contradict the condition that rows are weakly increasing. Denote by $d$ the number of $\ov{i}$ entries in the $(m+i)$-th row. When looking at the corresponding family of paths, the $(d'+1)$-st entry $\ov{i}$ corresponds to a horizontal step ending at the point $(m+i-d',m+i+d'-1)$. Since the bottommost of these paths touches the line $y=x-1$ at $(m+i,m+i-1)$ and all paths are strongly non-intersecting, each of these horizontal steps are followed by a vertical step. The first entry $i$ in the $(m+i)$-th row corresponds to a horizontal step starting at $(m+i-d-1,m+i+d)$. Since the top neighbour of this $i$ is not $\ov{i}$, the step before has to be a vertical step. This implies that the position $(m+i-d,m+i+d-1)$ is trapped.

\section{Combinatorial proofs of the dual Jacobi--Trudi formulae}
\label{sec:dualJacobiTrudi}
The proofs of the dual Jacobi--Trudi formulae for all of the skew characters under consideration --- skew symplectic, skew odd orthogonal and skew even orthogonal --- follow a similar scheme: we interpret the respective tableaux columnwise as non-intersecting lattice paths as seen in Section~\ref{sec:tableaux}.
Thus, each column corresponds to a lattice path whose generating function can be written in terms of elementary symmetric functions in the alphabet $\x^\pm=(x_1^{-1},x_1,\dots,x_n^{-1},x_n)$. Applying the Lindstr\"om--Gessel--Viennot Lemma~\ref{lem:LGV} then yields a determinantal formula.

The proofs increase in complexity and a brief summary is as follows.

\begin{itemize} 
\item For the \defn{skew symplectic case}, we have to compute the generating function of lattice paths consisting of unit horizontal and vertical steps in the positive direction which do not cross the line $y=x-1$. This is achieved by using a modified reflection principle  (Lemma~\ref{lem:modifiedreflection}) by Fulmek and Krattenthaler \cite{FulKra97}, which provides a difference of two elementary symmetric functions as the generating function, see  Lemma~\ref{lem:SymplecticPaths}.
\item In the case of \defn{skew odd orthogonal characters}, we need to compute the generating function of  lattice paths which may additionally have diagonal steps along the line $y=x$. For each fixed number of such diagonal steps in the lattice path, we obtain a difference of elementary symmetric functions in Lemma~\ref{lem:kSpecialOrthogonalPaths}. Adding these differences together allow telescopic cancelling, which finally yields a sum of two elementary symmetric functions in Corollary~\ref{cor:SpecialOrthogonalPaths}.
\item In the case of \defn{skew even orthogonal characters}, we allow horizontal double steps ending on the line $y=x$ instead of diagonal steps. By similar means (Lemma~\ref{lem:kOrthogonalPaths}), we also obtain a sum of two elementary symmetric functions as the generating function for these lattice paths in Corollary~\ref{cor:OrthogonalPaths}. However, applying the Lindstr\"om--Gessel--Viennot Lemma~\ref{lem:LGV} in this case also results in families of lattice paths that do not correspond to a skew $(n,m)$-even orthogonal tableau. We provide a sign-reversing involution between those and families of lattice paths with trapped positions at the end of the section.
\end{itemize} 

\subsection{Modified reflection principle}

One of the main tools in proving the dual Jacobi--Trudi formulae is a \defn{modified reflection principle} which we present next. 

A lattice point $(x,y) \in \mathbb{Z}^2$ is said to be \defn{even} if $x+y$ is even, otherwise it is said to be \defn{odd}. Suppose we have a lattice path starting in $P=(a,b)$ that consists of unit horizontal and vertical steps in the positive direction. In addition, we assume that $P$ is an {even point}. 
We assign weights to the steps as follows. Vertical steps have weight~$1$, whereas the weights of horizontal steps are given by a modified $e$-labelling: the step from $(i,j)$ to $(i+1,j)$ has weight $x_{(i+j-a-b)/2+1}^{-1}$ if $(i,j)$ is even and $x_{(i+j-a-b+1)/2}$ if $(i,j)$ is odd. 
Now the weight of the path is the product of the weights of its steps. 

Furthermore, consider a line $y=x+d$ such that $d$ is even and $P$ lies above that diagonal line, that is, $b>a+d$. The modified reflection principle will 
enable us to derive combinatorially a formula for the generating function of such lattice paths that start at $P$ and that have no intersection with the line $y=x+d$. This is done by computing the generating function of those paths that have an intersection with $y=x+d$ and then subtracting it from the generating function of all lattice paths.

\begin{lem} \label{lem:modifiedreflection}
Let $a,b,c,d,f \in \mathbb{Z}$ such that $a+b$ and $d$ are even, $b>a+d$ and
$f>c+d$. There is a weight-preserving bijection between lattice paths with unit horizontal and vertical steps that start at the point $P=(a,b)$ and end at $(c,f)$, and that have an intersection with the line $y=x+d$, and lattice paths with unit horizontal and vertical steps that start at the reflected point $P'=(b-d,a+d)$ of $P$ along $y=x+d$ and also end at $(c,f)$.
\end{lem}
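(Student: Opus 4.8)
The plan is to realise the bijection as the classical reflection principle applied to the initial segment of a path, and to locate the entire difficulty in verifying that the \emph{modified} $e$-labelling survives the construction.

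Write $R$ for the reflection in the line $\ell\colon y=x+d$, that is, $R(x,y)=(y-d,\,x+d)$. First I would record its elementary features: $R(P)=P'$, it is an involution, it sends unit up-right steps to unit up-right steps while interchanging horizontal and vertical steps, and --- crucially --- it preserves the antidiagonal coordinate $x+y$. Since $P$ and $(c,f)$ both lie strictly above $\ell$ (here $b>a+d$ and $f>c+d$) while $P'$ lies strictly below it, every path counted on either side must meet $\ell$. Hence, at the level of \emph{unweighted} paths the map is the usual one: given a $P$-path meeting $\ell$, let $t$ be its first intersection point with $\ell$, reflect the sub-path from $P$ to $t$ through $R$, and leave the sub-path from $t$ to $(c,f)$ untouched; the image is a $P'$-path, and the inverse reflects the initial sub-path of a $P'$-path up to its first intersection with $\ell$.

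The hard part is the weight, and naive reflection is \emph{not} weight-preserving: it converts the weighted horizontal steps of the initial segment into weightless vertical steps and vice versa, so it inverts that segment's weight. The structural fact that rescues the argument, and where the evenness hypotheses enter, is that because $a+b$ and $d$ are even the initial segment runs from the antidiagonal $a+b$ to the even antidiagonal of $t$, and therefore spans a whole number of consecutive \emph{antidiagonal pairs} $\{a+b+2k,\ a+b+2k+1\}$. Within the $k$-th pair the two possible horizontal-step labels are exactly $x_{k+1}^{-1}$ (on the even antidiagonal) and $x_{k+1}$ (on the odd one), which are mutually inverse. Grouping the two steps of the segment lying in each pair into a single super-step, one sees that the super-steps HH and VV carry weight $1$, whereas the two mixed super-steps HV and VH carry the mutually inverse weights $x_{k+1}^{\mp1}$.

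I would then define the modified reflection as ordinary reflection of the initial segment followed by reversal of the two steps inside every super-step. Reflection alone sends $\mathrm{HH}\leftrightarrow\mathrm{VV}$ and $\mathrm{HV}\leftrightarrow\mathrm{VH}$, so it inverts the weight of each mixed super-step; the subsequent reversal turns $\mathrm{HV}\leftrightarrow\mathrm{VH}$ back, so the net map fixes the weight of every super-step and hence of the whole segment. One checks the reversal is geometrically harmless: it only moves the midpoint of a super-step, which sits on an odd antidiagonal and so lies off $\ell$, and a short parity computation (using $d$ even) shows this midpoint stays strictly below $\ell$, so the image is a genuine lattice path whose first meeting with $\ell$ is still $t$. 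Since the portion from $t$ to $(c,f)$ is unchanged, the total weight is preserved, and since both reflection and pair-reversal are involutions the construction is its own inverse with the roles of $P$ and $P'$ exchanged. The main obstacle is exactly this weight bookkeeping: the word ``modified'' carries the whole load, and the evenness of $a+b$ and $d$ is precisely what guarantees the clean decomposition into antidiagonal pairs with mutually inverse labels that makes the correction local and weight-neutral.
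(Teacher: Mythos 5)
Your construction is correct and coincides exactly with the paper's own proof: reflecting the initial segment from $P$ to its first intersection with the line and then reversing the two steps inside each antidiagonal pair is precisely the paper's prescription of reflecting all even points and non-turning odd points in the usual way while relocating each odd turning point so that the direction of its turn is preserved, and your weight bookkeeping via the mutually inverse labels $x_{k+1}^{-1}$ and $x_{k+1}$ on the two antidiagonals of each pair (so that HH and VV pairs have weight $1$ while the mixed pairs are fixed by the composite map) is exactly why that modification is weight-preserving. No gaps.
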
 

\begin{proof} 
The proof is illustrated in Figure~\ref{fig:modified reflection principle}.
\begin{figure}[htb] 
	\centering
\begin{tikzpicture}[scale=.6]
	\draw[help lines,step=1cm, lightgray] (.5,.5) grid (12.5,12.5);
	\draw[very thick, red] (.5,.5) -- (12.5,12.5);
	\draw[very thick](1,3) -- (1,4) --node[above]{$x_1$} (2,4) -- (2,6) --node[above]{$\ov x_3$} (3,6) -- (3,7) --node[above]{$\ov x_4$} (4,7) --node[above]{$x_4$} (5,7) --node[above]{$\ov x_5$} (6,7) -- (6,8) --node[above]{$\ov x_6$} (7,8) --node[above]{$x_6$} (8,8) --node[above]{$\ov x_7$} (9,8) --node[above]{$ x_7$} (10,8) -- (10,9) --node[above]{$x_8$} (11,9) -- (11,10) -- (11,11) -- (11,12.5);
	\draw[very thick](3,1) -- (3,2) --node[above]{$x_1$} (4,2) --node[above]{$\ov x_2$} (5,2) --node[above]{$x_2$} (6,2) --node[above]{$\ov x_3$} (7,2) -- (7,3) -- (7,4) -- (7,5) --node[above]{$\ov x_5$} (8,5) -- (8,6) -- (8,7) -- (8,8);
	
	\draw[dashed,gray, thick] (1,3) -- (3,1);
	\draw[dashed,gray, thick] (2,4) -- (4,2);
	\draw[dashed,gray, thick] (2,6) -- (6,2);
	\draw[dashed,gray, thick] (3,7) -- (7,3);
	\draw[dashed,gray, thick] (5,7) -- (7,5);
	\draw[dashed,gray, thick] (6,8) -- (8,6);
	
	\filldraw[cyan] (1,3) circle (3.5pt) node[below]{\color{black} $P$};
	\filldraw[cyan] (2,4) circle (3.5pt);
	\filldraw[cyan] (2,6) circle (3.5pt);
	\filldraw[cyan] (3,7) circle (3.5pt);
	\filldraw[cyan] (5,7) circle (3.5pt);
	\filldraw[cyan] (6,8) circle (3.5pt);
	\filldraw[cyan] (8,8) circle (3.5pt)node[below right]{\color{black} $Q$};;
	\filldraw[cyan] (3,1) circle (3.5pt)node[below]{\color{black} $P'$};
	\filldraw[cyan] (4,2) circle (3.5pt);
	\filldraw[cyan] (6,2) circle (3.5pt);
	\filldraw[cyan] (7,3) circle (3.5pt);
	\filldraw[cyan] (7,5) circle (3.5pt);
	\filldraw[cyan] (8,6) circle (3.5pt);
	\filldraw[red] (1,4) \Square{3.5pt};
	\filldraw[red] (2,5) \Square{3.5pt};
	\filldraw[red] (3,6) \Square{3.5pt};
	\filldraw[red] (4,7) \Square{3.5pt};
	\filldraw[red] (6,7) \Square{3.5pt};
	\filldraw[red] (7,8) \Square{3.5pt};
	\filldraw[red] (3,2) \Square{3.5pt};
	\filldraw[red] (5,2) \Square{3.5pt};
	\filldraw[red] (7,2) \Square{3.5pt};
	\filldraw[red] (7,4) \Square{3.5pt};
	\filldraw[red] (8,5) \Square{3.5pt};
	\filldraw[red] (8,7) \Square{3.5pt};
\end{tikzpicture}
\caption{The modified reflection principle where $\ov x_i\coloneqq x_i^{-1}$.}
\label{fig:modified reflection principle}
\end{figure}
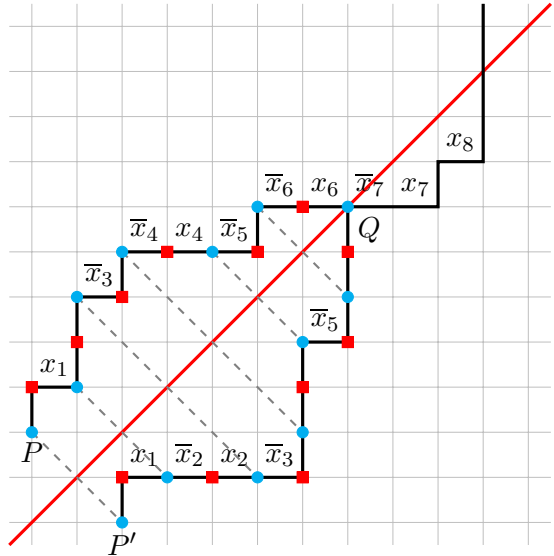
Suppose our path touches the line for the first time at the point~$Q$, when traversing it starting at $P$. 
The modified reflection of the path from $P$ to $Q$ along the line $y=x+d$ works as follows: $P$ is reflected in the usual way, so it is mapped to $P'=(b-d,a+d)$. The same applies to all other even points and to odd points at which the path does not turn; these are odd points that lie in between either two vertical or two horizontal steps. The remaining points are odd points in which the path turns. We reflect these points in such a way that the directions of the turns are maintained. Concretely, if an odd point $(x,y)$ comes with a left turn (a horizontal step followed by a vertical step), it is mapped to $(y-d+1,x+d-1)$; if it comes with a right turn (a vertical step followed by a horizontal step), it is mapped to $(y-d-1,x+d+1)$. 

Note that the modification of the usual reflection ensures that the mapping is weight-preserving. 
\end{proof} 

\subsection{The Lindstr\"om--Gessel--Viennot Lemma}
\label{sec:LGV}

Another important tool is the well-known Lindstr\"om--Gessel--Viennot Lemma \cite{GesVie85,GesVie89,Lin73}. We state it in a form that is convenient for us. We need the following two notions.

\begin{itemize}
\item We say that a family of lattice paths is \defn{weakly non-intersecting} if no pair of paths has an intersection at a lattice 
point that is the endpoint of steps in both paths.
\item We say that a family of lattice paths is \defn{strongly non-intersecting} if no pair of paths  intersects when considering them as 
subsets of $\mathbb{R}^2$.
\end{itemize}

One may interpret the first notion of non-intersecting when considering the paths as graphs, and the second notion when considering their natural embeddings in $\mathbb{R}^2$. The background for these notions is that the Gessel--Viennot sign-reversing involution can only be applied if 
there is a pair of paths that intersect at a lattice point that is the endpoint of steps in both paths,  which is precisely the case when 
the family is \emph{not} weakly non-intersecting. 
Also note that the two notions of non-intersecting are equivalent for families of Schur paths, of $m$-symplectic paths, and of $m$-odd orthogonal paths, however, they differ for families of $m$-even orthogonal paths due to the steps of length $2$. Thus we may simply say non-intersecting in the first two cases.

\begin{lem}[The Lindstr\"om--Gessel--Viennot Lemma]
\label{lem:LGV} 
We consider lattice paths on the lattice $\mathbb{Z}^2$ which is equipped with edge weights and 
where the allowed steps are given by a finite subset of $\mathbb{Z}^2$ such that the step set does not allow 
self-intersections of paths. Let $S_1,\ldots,S_n$ and $E_1,\ldots,E_n$ be lattice points and let ${\mathcal P}(S_i \to E_j)$ denote the generating function of lattice paths from $S_i$ and $E_j$ with respect to the step set and the edge weights. Then 
\[
\det_{1 \leq i,j \leq n}( {\mathcal P}(S_i \to E_j) )
\]
is the signed generating function of 
families of $n$ lattice paths  such that the following is satisfied.
\begin{itemize} 
\item The  $S_1,S_2,\ldots,\allowbreak S_n$ are the starting points and 
the $E_1,E_2,\ldots,E_n$ are the ending points. 
\item The paths are weakly non-intersecting. 
\item The sign 
is $\sgn \sigma$ where $\sigma$ is the permutation of $\{1,2,\ldots,n\}$ such that $S_i$ is connected to $E_{\sigma(i)}$ for $i=1,2,\ldots,n$.
\end{itemize}
\end{lem}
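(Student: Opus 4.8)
The plan is to expand the determinant by the Leibniz formula and then remove the unwanted families by a sign-reversing, weight-preserving involution of tail-swapping type. Writing
\[
\det_{1\le i,j\le n}\big(\mathcal P(S_i\to E_j)\big)=\sum_{\sigma\in\mathfrak S_n}\sgn\sigma\prod_{i=1}^n \mathcal P(S_i\to E_{\sigma(i)}),
\]
each product is the generating function of all families $(P_1,\dots,P_n)$ in which $P_i$ runs from $S_i$ to $E_{\sigma(i)}$ using the prescribed step set. Hence the whole sum is the signed generating function over all such families, where a family connecting $S_i$ to $E_{\sigma(i)}$ carries the sign $\sgn\sigma$ and weight equal to the product of its edge weights. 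The families in which all paths are weakly non-intersecting already contribute exactly the right-hand side of the lemma, so it remains to show that the families which fail to be weakly non-intersecting contribute nothing.

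To this end I would construct an involution $\Phi$ on the set of such families that preserves the weight but reverses the sign. If a family is not weakly non-intersecting, there is at least one lattice point that is the endpoint of steps in two distinct paths. I choose the data to swap canonically: let $i$ be the smallest index for which $P_i$ shares a step-endpoint with another path; traversing $P_i$ from $S_i$, let $X$ be the first such shared step-endpoint; and let $j$ be the smallest index (necessarily $j>i$, since a path meeting $P_i$ itself meets another path) for which $P_j$ passes through $X$ as a step-endpoint. Define $\Phi$ by exchanging the portions of $P_i$ and $P_j$ strictly after $X$, so that the new $i$-th path runs $S_i\to X$ along $P_i$ and then $X\to E_{\sigma(j)}$ along the tail of $P_j$, and symmetrically for the new $j$-th path. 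Because the step set forbids self-intersections, both resulting walks are again admissible paths; $\Phi$ leaves the multiset of edges unchanged and is therefore weight-preserving; and it replaces $\sigma$ by $\sigma\cdot(i\,j)$, so the sign flips.

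It remains to verify that $\Phi$ is an involution, which I expect to be the main obstacle and is the reason for the careful canonical choice. The key observation is that $\Phi$ alters the two selected paths only strictly after $X$, leaving every path unchanged up to and including $X$ and leaving all paths other than the $i$-th and $j$-th entirely fixed. Using that the original $P_i$ has no shared step-endpoint before $X$, and that no path of index below $i$ met any other path (so in particular none met the tails of $P_i$ or $P_j$ that get redistributed), one checks that in $\Phi(\mathbf P)$ the rule selects the same index $i$, the same point $X$, and the same index $j$; applying $\Phi$ once more swaps the tails back. Thus $\Phi$ pairs the families that are not weakly non-intersecting into sign-reversing, weight-matched pairs whose contributions cancel, and the determinant reduces to the signed generating function over weakly non-intersecting families with the stated sign convention, as claimed.
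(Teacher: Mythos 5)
The paper does not prove Lemma~\ref{lem:LGV}: it states the lemma in a convenient form and attributes it to Lindstr\"om and to Gessel and Viennot, so there is no internal proof to compare against. Your argument is the standard tail-swapping proof, and it is correct. Two points are worth emphasising. First, your canonical choice of swapping data (minimal index $i$ whose path meets another, first shared step-endpoint $X$ along $P_i$, minimal $j$ with $X$ on $P_j$) is the right one: the naive choice of the lexicographically least intersecting pair together with their first common point is famously \emph{not} an involution, whereas with your choice the triple $(i,X,j)$ is recovered after the swap for exactly the reasons you sketch --- paths of index below $i$ are untouched and still meet nothing because the vertex set of $P_i\cup P_j$ is preserved; the portion of $P_i$ before $X$ acquires no new shared vertex, since such a vertex would either be a common point of $P_i$ and $P_j$ preceding $X$ or force $P_i$ to self-intersect; and the set of paths having $X$ as a step-endpoint is unchanged. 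Second, your decision to swap only at points that are step-endpoints of \emph{both} paths is not cosmetic but precisely what yields the ``weakly non-intersecting'' conclusion in the form the paper needs: for the even orthogonal model with steps of length two, a crossing at the interior of an $\o$-horizontal step is invisible to this involution, and the paper must cancel those surviving configurations by the separate sign-reversing involution of Section~\ref{sec:dualJacobiTrudi_o}. The hypothesis that the step set forbids self-intersections is used exactly where you invoke it, namely to guarantee that the swapped walks are genuine paths and to exclude a pre-$X$ vertex of $P_i$ from lying on the redistributed tails; both uses are legitimate.
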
 

\subsection{Skew symplectic characters}

For the skew symplectic character $\sp^m_{\lambda/\mu}$, we have to compute the generating function of families of non-intersecting lattice paths from $(\mu'_j-j+1,2m-\mu'_j+j-1)$ to $(\lambda'_i-i+1,2n+2m-\lambda_i'+i-1)$ for $1 \leq i,j \leq N$ with $\la_1\leq N$, $l(\mu)\leq m$ and $l(\la)\leq n+m$ that consist of unit horizontal and vertical steps in the positive direction and that do not cross the line $y=x-1$, see Section~\ref{sec:spYT}. In order to apply Lemma~\ref{lem:LGV}, we first compute the generating function of paths between pairs of starting and
ending points.

\begin{lem}\label{lem:SymplecticPaths}
Let $a,b,c\in\mathbb{Z}$ and $n\in\mathbb{N}$ such that $a+b$ is even 
and the points $(a,b)$ and $(c,2n+a+b-c)$ lie strictly above the line $y=x-1$.
Then, with the path set-up of Lemma~\ref{lem:modifiedreflection},
the generating function of paths from $(a,b)$ to 
$(c,2n+a+b-c)$ is given by
\begin{equation*}
e_{c-a}(\x^\pm) - e_{c-b-2}(\x^\pm),
\end{equation*}
where $\x^\pm=(x_1,x_1^{-1},\dots,x_n,x_n^{-1})$.
\end{lem}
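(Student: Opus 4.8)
The plan is to obtain the generating function as the difference of two elementary symmetric functions exactly as the statement suggests: the first term $e_{c-a}(\x^\pm)$ should count \emph{all} paths from $(a,b)$ to $(c,2n+a+b-c)$ with no regard for the line, while the second term $e_{c-b-2}(\x^\pm)$ should count the forbidden ones, i.e.\ those that dip below $y=x-1$, via the modified reflection principle of Lemma~\ref{lem:modifiedreflection}. So I would split the computation into an unconstrained count and a reflection argument, and then subtract.

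For the unconstrained count I would first note that every path under consideration has exactly $2n$ steps, namely $c-a$ horizontal and $(2n+a+b-c)-b=2n+a-c$ vertical. The key observation is that the coordinate sum $i+j$ increases by one at each step, so the $(k+1)$-st step departs from a point with $i+j-a-b=k$; by the modified $e$-labelling its weight, if horizontal, is then $x_{k/2+1}^{-1}$ when $k$ is even and $x_{(k+1)/2}$ when $k$ is odd. As $k$ ranges over $0,\dots,2n-1$ these weights run through the $2n$ distinct letters of $\x^\pm$, each appearing exactly once. Since a path is precisely a choice of which $c-a$ of the $2n$ steps are horizontal, summing over all such choices the product of the corresponding distinct letters yields $e_{c-a}(\x^\pm)$, using that $e_{c-a}$ is symmetric in its arguments.

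Next I would identify the forbidden paths. Because $j-i$ changes by $\pm1$ at each step and is nonnegative at $(a,b)$, a path fails to stay weakly above $y=x-1$ exactly when it first meets the line $y=x-2$. I would therefore apply Lemma~\ref{lem:modifiedreflection} with $d=-2$, whose hypotheses are met: $a+b$ and $d$ are even, and since both endpoints lie strictly above $y=x-1$ they lie strictly above $y=x-2$, giving $b>a+d$ and $2n+a+b-c>c+d$. The lemma then places the forbidden paths in weight-preserving bijection with \emph{all} paths from the reflected start $P'=(b-d,a+d)=(b+2,a-2)$ to $(c,2n+a+b-c)$. Repeating the count of the previous paragraph for $P'$ and subtracting would produce the claimed $e_{c-a}(\x^\pm)-e_{c-b-2}(\x^\pm)$.

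The step I expect to require the most care is checking that this second count really returns $e_{c-b-2}$ with the \emph{same} labelling. The modified $e$-labelling is anchored at $(a,b)$ through the quantity $i+j-a-b$, so I must verify that paths from $P'$ are weighted by this same global rule rather than by a rule re-anchored at $P'$. The point is that $(b+2)+(a-2)=a+b$, so $P'$ is again an even point at which $i+j-a-b=0$, and hence the correspondence between step index and alphabet letter restarts identically; a path from $P'$ then has $c-(b+2)=c-b-2$ horizontal steps among its $2n$ steps, and the earlier argument gives weighted sum $e_{c-b-2}(\x^\pm)$. The weight-preservation of the bijection itself is already guaranteed by Lemma~\ref{lem:modifiedreflection}, so nothing further is needed there.
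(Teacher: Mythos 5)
Your proposal is correct and follows essentially the same route as the paper: the unconstrained count gives $e_{c-a}(\x^\pm)$, the paths crossing $y=x-1$ are exactly those touching $y=x-2$ and are reflected via Lemma~\ref{lem:modifiedreflection} with $d=-2$ to all paths from $(b+2,a-2)$, contributing $e_{c-b-2}(\x^\pm)$. Your extra checks (that the $2n$ step labels exhaust the alphabet $\x^\pm$, and that the labelling anchored at $a+b$ applies unchanged to the reflected starting point since $(b+2)+(a-2)=a+b$) are details the paper leaves implicit, and they are verified correctly.
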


\begin{proof}
	We use the modified reflection principle from Lemma~\ref{lem:modifiedreflection} with $(a,b,c,d,f)\mapsto(a,b,c,-2,2n+a+b-c)$ for this proof. The generating function of all lattice paths from $(a,b)$ to $(c,2n+a+b-c)$ with the given step set regardless of whether they cross the line $y=x-1$ is $e_{c-a}(\x^\pm)$. 
	
	Next, we derive the generating function of all lattice paths from $(a,b)$ to $(c,2n+a+b-c)$ that do cross that line. Those paths touch at least once the line $y=x-2$. We take the initial part of a path from $(a,b)$ to that first intersection point and reflect it in a weight-preserving way according to the modified reflection principle. This yields paths from $(b+2,a-2)$ to $(c,2n+a+b-c)$. The generating function of these paths is $e_{c-b-2}(\x^\pm)$, which we need to subtract from $e_{c-a}(\x^\pm)$ to obtain the result.
\end{proof}

In view of the above, the generating function of lattice paths from $(\mu'_j-j+1,2m-\mu'_j+j-1)$ to $(\lambda'_i-i+1,2n+2m-\la_i'+i-1)$  which do not cross the line $y=x-1$ is given by
\begin{equation*}
	e_{\lambda'_i -\mu'_j-i+j}(\x^\pm) - e_{\lambda'_i + \mu'_j-i-j-2m}(\x^\pm).
\end{equation*}
Note the requirement that $(\la'_i-i+1,2n+2m-\la_i'+i-1)$ lies above
the line $y=x-1$ is equivalent to $l(\la)\leq n+m$.
Applying the Lindstr\"om--Gessel--Viennot Lemma~\ref{lem:LGV} finally yields
\begin{equation*}
	\sp^m_{\lambda/\mu}(\x) = \det_{1 \leq i,j \leq N} \big( e_{\lambda'_i -\mu'_j-i+j}(\x^\pm) - e_{\lambda'_i + \mu'_j-i-j-2m}(\x^\pm) \big), 
\end{equation*}
and this concludes the combinatorial proof of Theorem~\ref{thm:dual} \eqref{dual_sp}.

\subsection{Skew odd orthogonal characters}

For the skew odd orthogonal character $\so_{\lambda/\mu}^m$, we have to compute as before the generating function of families of non-intersecting lattice paths from $(\mu'_j-j+1,2m-\mu'_j+j-1)$ to $(\lambda'_i-i+1,2n+2m-\la_i'+i-1)$ for $1 \leq i,j \leq N$ with $\la_1\leq N$, $l(\mu)\leq m$ and $l(\la)\leq n+m$ that do not cross the line $y=x-1$; see
Section~\ref{sec:soYT}. In this case, however, we have a bigger step set. We allow unit horizontal and vertical steps in the positive direction and, in addition, diagonal steps $(1,1)$ on the line $y=x$ which have weight~$1$. In order to compute this generating function, we first derive the generating function of individual paths with a given number~$k$ of diagonal steps and then we sum over all non-negative integers~$k$.

\begin{lem}\label{lem:kSpecialOrthogonalPaths}
Let $k$ be a positive integer and let $\{(1,0),(0,1),(1,1)\}$ be the step set such that diagonal steps are only allowed along the line $y=x$. 
Also let $a,b,c\in\mathbb{Z}$ and $n\in\mathbb{N}$ such that $a+b$ is even and
the points $(a,b)$ and $(c,2n+a+b-c)$ lie strictly above the line $y=x-1$.
Then the generating function of lattice paths from $(a,b)$ to $(c,2n+a+b-c)$ that do not cross the line $y=x-1$ and that have exactly $k$ diagonal steps is
	\begin{equation*}
		e_{c-b-k}(\x^\pm) - e_{c-b-k-2}(\x^\pm),
	\end{equation*}
where $\x^\pm=(x_1,x_1^{-1},\dots,x_n,x_n^{-1})$.
\end{lem}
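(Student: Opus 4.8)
The plan is to reduce everything to the diagonal-free situation already settled in Lemma~\ref{lem:SymplecticPaths}, exploiting that the $k$ diagonal steps all lie on $y=x$ and carry weight~$1$. First I would record the obstruction that dictates this route: one cannot simply imitate Lemma~\ref{lem:SymplecticPaths} and reflect the whole path across $y=x-2$, because a diagonal step, lying on $y=x$, is sent off that line by the reflection and its image is no longer an admissible step. Reflection must therefore be confined to the diagonal-free portions of the path.

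Concretely, I would decompose an admissible path with exactly $k$ diagonal steps at those steps, writing it as $\sigma_0\,D\,\sigma_1\,D\cdots D\,\sigma_k$, where each $\sigma_i$ uses only the unit steps $(1,0),(0,1)$ and each $D$ is a diagonal step. Since a diagonal step begins and ends on $y=x$, every interior stretch runs between two points of $y=x$; only $\sigma_0$ begins at $(a,b)$ and only $\sigma_k$ ends at $(c,2n+a+b-c)$. Because the two ends of a diagonal step differ by $(1,1)$, each diagonal step consumes a full conjugate pair $\{x_j,x_j^{-1}\}$ at weight~$1$, and the remaining $n-k$ conjugate pairs split, in order, into $k+1$ (possibly empty) consecutive blocks $V_0,\dots,V_k$ read off by $\sigma_0,\dots,\sigma_k$ under the modified $e$-labelling.

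Each stretch $\sigma_i$ contains no diagonal step, so the modified reflection principle (Lemma~\ref{lem:modifiedreflection}), exactly as used in Lemma~\ref{lem:SymplecticPaths}, applies verbatim on its block of variables. Every stretch except $\sigma_0$ has at least one endpoint on $y=x$, and since such a stretch is reflected from a starting point lying on the diagonal it contributes a \emph{balanced} difference $e_{q_i}(V_i)-e_{q_i-2}(V_i)$. The initial stretch $\sigma_0$ is the only one starting strictly above $y=x$, at height $b-a$, and it contributes the \emph{unbalanced} difference $e_{p_0+(b-a)/2}(V_0)-e_{p_0-(b-a)/2-2}(V_0)$, where $p_0=\tfrac12\lvert V_0\rvert$. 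This single unbalanced factor is precisely the feature that, after the first passage through $y=x$ forced by $k\geq 1$, converts the $a$-dependence of the $k=0$ formula into the $b$-dependence asserted here.

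Finally I would assemble the answer by summing the product of these contributions over all ways of splitting the $n$ conjugate pairs into the ordered blocks $V_0,\dots,V_k$ and the $k$ diagonal pairs. Encoding everything through the generating function $E(t)=\prod_{j=1}^n\bigl(1+(x_j+x_j^{-1})t+t^2\bigr)=\sum_r e_r(\x^\pm)t^r$ and using its reciprocal symmetry $e_r(\x^\pm)=e_{2n-r}(\x^\pm)$, the convolution should telescope to $e_{c-b-k}(\x^\pm)-e_{c-b-k-2}(\x^\pm)$; I have checked that it does so in small cases. I expect this assembly to be the main obstacle: the blocks $V_0,\dots,V_k$ use \emph{consecutive} variables, so the $e$-labelling couples them and the factors cannot simply be multiplied as independent symmetric functions, and the lone asymmetric factor coming from $\sigma_0$ must be reconciled with the balanced factors via the $t\mapsto t^{-1}$ symmetry before the sum collapses. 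A clean way to organize this is to interpret the decomposition as a half-line transfer-operator computation in the height coordinate with a weight-$1$ self-loop at height $0$ for the diagonal steps, so that the coupling and the telescoping are handled in one stroke.
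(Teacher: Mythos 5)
Your decomposition at the diagonal steps is sound: the blocks of conjugate pairs, the weight-one pair consumed by each diagonal step, and the per-stretch generating functions $e_{d_i}(V_i)-e_{d_i-2}(V_i)$ (balanced for $i\geq 1$, unbalanced for $\sigma_0$) are all correct. But the proof stops exactly where the content of the lemma begins. The assembly is \emph{not} a convolution of elementary symmetric polynomials in the full alphabet: the blocks $V_0,\dots,V_k$ are consecutive and their sizes are tied to the horizontal displacements (indeed $\lvert V_i\rvert=2d_i$ for each interior stretch), so the sum over diagonal positions is a sum over ordered splittings of the alphabet with coupled indices, and neither the product formula for $E(t)$ nor the symmetry $e_r(\x^\pm)=e_{2n-r}(\x^\pm)$ evaluates it. You acknowledge precisely this obstacle and fall back on ``a half-line transfer-operator computation,'' but that is merely a restatement of the original path-counting problem; no argument is given that the sum closes to $e_{c-b-k}(\x^\pm)-e_{c-b-k-2}(\x^\pm)$, and checking small cases does not fill the gap.

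The missing idea --- and the paper's route --- is to avoid the decomposition altogether. Replace each of the $k$ diagonal steps by two vertical steps, keeping the terminal segment fixed and translating everything before the replaced step by $(1,-1)$. Since the modified $e$-labelling depends only on the coordinate sum, which this translation preserves, and since diagonal and vertical steps both have weight $1$, this is a single global weight-preserving bijection onto ordinary unit-step paths from $(a+k,b-k)$ to $(c,2n+a+b-c)$ that intersect the line $y=x-2k$ but do not cross $y=x-2k-1$; the inverse repeatedly locates the rightmost point where the path touches the relevant line and continues with two vertical steps. Two applications of the modified reflection principle (Lemma~\ref{lem:modifiedreflection}) --- across $y=x-2k$ for the ``must intersect'' condition, giving $e_{c-b-k}(\x^\pm)$, and across $y=x-2k-2$ for the ``must not cross $y=x-2k-1$'' condition, giving $e_{c-b-k-2}(\x^\pm)$ --- then yield the claimed difference with no convolution to evaluate. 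If you insist on salvaging your decomposition, you would still have to prove the coupled-block identity, and the natural proof of even its two-block case is essentially this same shift-and-reflect argument.
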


\begin{proof}
	First, we map a path with $k$ diagonal steps to a path which only consists of unit horizontal and vertical steps as follows. Replace every diagonal step by two vertical steps. In the process, we keep the terminal part of the path and shift the initial part accordingly. This is illustrated in Figure~\ref{fig:lemmasp}.
	We obtain a lattice path from $(a+k,b-k)$ to $(c,2n+a+b-c)$ that does not cross the line $y=x-2k-1$ but intersects the line $y=x-2k$. 
	
	\begin{figure}
\begin{tikzpicture}
\begin{scope}[scale=.5]
	\draw[help lines,step=1cm, lightgray] (-.5,-1.5) grid (11.5,11.5);
	\draw[very thick, red] (-.5,-.5) -- (11.5,11.5);
	\draw[very thick] (0,2) --++ (1,0) --++ (0,2) --++(2,0) --++ (2,2) --++ (1,0) --++ (0,2) --++ (1,0) --++ (0,1) --++ (1,0) --++ (1,1) --++(0,1.5);
	\filldraw[black] (0,2) circle (3pt) node[below]{$P$};	
	\node at (11.5,12) {$y=x-1$};
	\draw[very thick,cyan] (3,4) --++(2,2);
	\draw[very thick,cyan] (8,9) --++(1,1);
\end{scope}
\end{tikzpicture}
\begin{tikzpicture}
\begin{scope}[scale=.5]
	\draw[help lines,step=1cm, lightgray] (-.5,-1.5) grid (11.5,11.5);
	\draw[very thick, red] (-.5,-.5)-- (11.5,11.5);
	\draw[very thick, red, dashed] (3.5,-1.5)-- (11.5,6.5);
	\draw[very thick] (3,-1) --++ (1,0) --++ (0,2) --++(2,0) --++ (0,4) --++ (1,0) --++ (0,2) --++ (1,0) --++ (0,1) --++ (1,0) --++ (0,2) --++(0,1.5);
	\filldraw[black] (3,-1) circle (3pt) node[left]{$P'$};
	\node at (11.5,12) {$y=x-1$};
	\node at (11.5,7) {$y=x-2k$};
	\draw[very thick,cyan] (6,1) --++(0,4);
	\draw[very thick,cyan] (9,8) --++(0,2);
\end{scope}
\end{tikzpicture}
\caption{Situation in Lemma~\ref{lem:kSpecialOrthogonalPaths} when replacing diagonal steps by vertical steps of length $2$.}
\label{fig:lemmasp}
\end{figure}
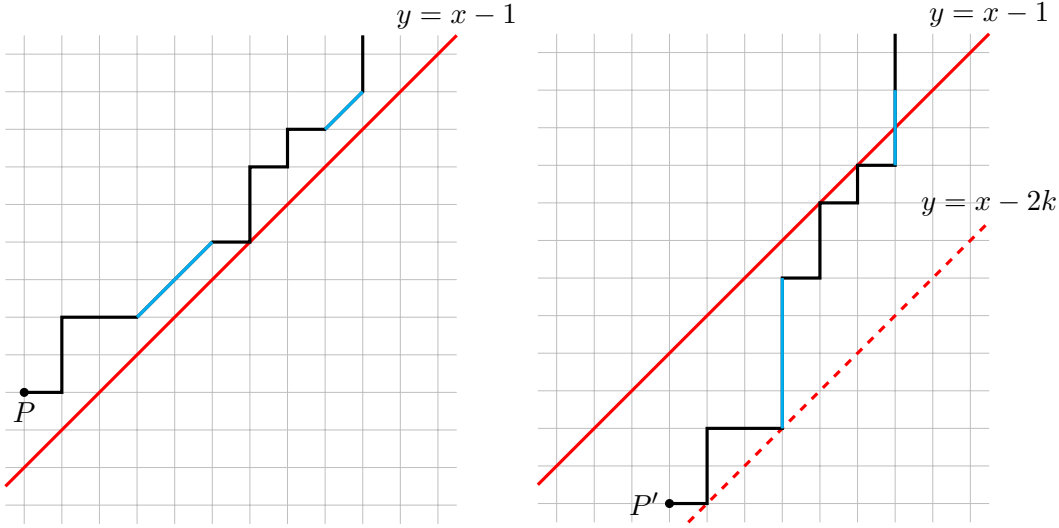	
	
	We observe that this mapping is a weight-preserving bijection. For the inverse mapping, we consider a lattice path from $(a+k,b-k)$ to $(c,2n+a+b-c)$ not crossing the line $y=x-2k-1$ but intersects the line $y=x-2k$. Since the path goes to $(c,2n+a+b-c)$, there must be a point where the path touches the line $y=x-2k$ and continues with at least two vertical steps. We take the rightmost of such points, replace the first two vertical steps by a diagonal step and shift the initial part of the path by $(-1,1)$. We thus obtain a path from $(a+k-1,b-k+1)$ to $(c,2n+a+b-c)$ not crossing the line $y=x-2k+1$ but intersecting the line $y=x-2k+2$. As before, there is a point where the path intersects with the line $y=x-2k+2$ and continues with at least two vertical step. We iteratively repeat the step of replacing two vertical steps by a diagonal step at the rightmost occurrence and shifting the initial part of the path by $(-1,1)$ until we have got a path that starts at $(a,b)$ and has $k$ diagonal steps.
	
	With this weight-preserving bijection in mind, the statement in the theorem follows by applying twice the modified reflection principle from Lemma~\ref{lem:modifiedreflection} as follows. 

	To compute the generating function of paths from $(a+k,b-k)$ to $(c,2n+a+b-c)$ that intersect the line $y=x-2k$, we reflect $(a+k,b-k)$ along $y=x-2k$, which yields $(b+k,a-k)$. Hence, the generating function is $e_{c-b-k}(\x^\pm)$. 
	
From the latter generating function, we have to subtract the generating function of paths that cross the line $y=x-2k-1$; note that such paths always have to intersect the line $y=x-2k$ because 
of the end point $(c,2n+a+b-c)$. Paths that cross the line $y=x-2k-1$ definitely intersect the line $y=x-2k-2$. Reflecting $(a+k,b-k)$ along $y=x-2k-2$ gives $(b+k+2,a-k-2)$, which implies that the generating function of paths crossing the line $y=x-2k-1$ is $e_{c-b-k-2}(\x^\pm)$. This concludes the proof of the lemma.
\end{proof}

The generating function of paths from $(a,b)$ to $(c,2n+a+b-c)$ without diagonal steps is $e_{c-a}(\x^\pm)-e_{c-b-2}(\x^\pm)$, which follows from Lemma~\ref{lem:SymplecticPaths}. Next we sum the generating functions for paths with exactly $k$ diagonal steps for all non-negative $k$. This sum turns out to be a telescoping sum which reduces to a sum of two terms after cancellation:
\begin{equation*}
	\big( e_{c-a}(\x^\pm)-e_{c-b-2}(\x^\pm)\big) + \sum_{k \geq 1} \big(e_{c-b-k}(\x^\pm)-e_{c-b-k-2}(\x^\pm) \big) = e_{c-a}(\x^\pm)+e_{c-b-1}(\x^\pm).
\end{equation*}
This leaves us with the following.

\begin{cor}\label{cor:SpecialOrthogonalPaths}
Let $a,b,c\in\mathbb{Z}$ and $n\in\mathbb{N}$ such that $a+b$ is even
and the points $(a,b)$ and $(c,2n+a+b-c)$ lie strictly above the line $y=x-1$.
Then the generating function of lattice paths from $(a,b)$ to $(c,2n+a+b-c)$ 
with step set $\{(1,0),(0,1),(1,1)\}$ that do not cross the line $y=x-1$ and where diagonal steps are only allowed on the line $y=x$ is
	\begin{equation*}
		e_{c-a}(\x^\pm)+e_{c-b-1}(\x^\pm),
	\end{equation*}
where $\x^{\pm}=(x_1,x_1^{-1},\dots,x_n,x_n^{-1})$.
\end{cor}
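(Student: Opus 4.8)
The plan is to stratify the admissible paths by their number of diagonal steps and to sum the resulting generating functions. Since each path has a well-defined number $k\geq 0$ of diagonal steps (these being the only steps lying on $y=x$), the full generating function splits as $\sum_{k\geq 0} G_k$, where $G_k$ is the generating function of admissible paths with exactly $k$ diagonal steps. First I would record the $k=0$ contribution: such paths are precisely the symplectic paths of Lemma~\ref{lem:SymplecticPaths}, so $G_0 = e_{c-a}(\x^\pm) - e_{c-b-2}(\x^\pm)$. For each $k\geq 1$, Lemma~\ref{lem:kSpecialOrthogonalPaths} gives $G_k = e_{c-b-k}(\x^\pm) - e_{c-b-k-2}(\x^\pm)$.

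Before summing, I would make sure the sum is well defined. On the combinatorial side, a path with $k$ diagonal, $h$ horizontal and $v$ vertical unit steps satisfies $k+h=c-a$ and $k+v=2n+a-c$, so $0\leq k\leq\min(c-a,\,2n+a-c)$ and only finitely many $G_k$ are nonzero. On the algebraic side this is mirrored by the vanishing $e_r(\x^\pm)=0$ for $r<0$ or $r>2n$ (as $\x^\pm$ has exactly $2n$ entries), which truncates the formally infinite series $\sum_{k\geq1}$.

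The substance of the argument is then the telescoping cancellation. Abbreviating $T_k\coloneqq e_{c-b-k}(\x^\pm)$, the tail is $\sum_{k\geq1}(T_k - T_{k+2})$; the partial sum through $k=K$ equals $T_1+T_2-T_{K+1}-T_{K+2}$, and letting $K\to\infty$ the boundary terms vanish, leaving $T_1+T_2 = e_{c-b-1}(\x^\pm)+e_{c-b-2}(\x^\pm)$. Adding the $k=0$ term, the two occurrences of $e_{c-b-2}(\x^\pm)$ appear with opposite signs and cancel, yielding $e_{c-a}(\x^\pm)+e_{c-b-1}(\x^\pm)$, exactly as in the display preceding the statement.

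The step I expect to demand the most care is the bookkeeping of the telescoping, namely correctly isolating the two surviving boundary terms $T_1,T_2$ and invoking the vanishing of $e_r$ outside $[0,2n]$ to justify dropping the tail. Everything else reduces to quoting the two preceding lemmas and performing a routine rearrangement, so there is no genuine obstacle beyond this cancellation.
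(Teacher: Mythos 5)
Your proposal is correct and follows essentially the same route as the paper: stratify by the number $k$ of diagonal steps, take the $k=0$ term from Lemma~\ref{lem:SymplecticPaths} and the $k\geq 1$ terms from Lemma~\ref{lem:kSpecialOrthogonalPaths}, and telescope. Your extra remarks on why the sum truncates (vanishing of $e_r(\x^\pm)$ outside $[0,2n]$) are a welcome bit of care that the paper leaves implicit.
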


By setting $a=\mu'_j-j+1$, $b=2m-\mu'_j+j-1$ and $c=\lambda'_i-i+1$ in Corollary~\ref{cor:SpecialOrthogonalPaths} and applying the Lindstr\"om--Gessel--Viennot Lemma (Lemma~\ref{lem:LGV}), we finally obtain
\begin{equation*}
	\so^m_{\lambda/\mu}(\x) = \det_{1 \leq i,j \leq N} \big( e_{\lambda'_i -\mu'_j-i+j}(\x^\pm) + e_{\lambda'_i + \mu'_j-i-j-2m+1}(\x^\pm) \big),
\end{equation*}
and this concludes the combinatorial proof of Theorem~\ref{thm:dual} \eqref{dual_so}.

\subsection{Skew even orthogonal characters}
\label{sec:dualJacobiTrudi_o}

Recall that in the case of skew even orthogonal characters, we have to compute the generating function of strongly
non-intersecting lattice paths from $(\mu'_j-j+1,2m-\mu'_j+j-1)$ to $(\lambda'_i-i+1,2n+2m-\la_i'+i-1)$ for $1 \leq i,j \leq N$ with $\la_1\leq N$, $l(\mu)\leq m$ and $l(\la)\leq n+m$ that do not cross the line $y=x-1$ and where a certain pattern that corresponds to the $m$-even orthogonal condition is not allowed; see Section~\ref{sec:oYT}. The step set is $\{(1,0),(0,1),(2,0)\}$, where the horizontal steps $(2,0)$, called $\o$-horizontal steps, are only allowed to start at points on the line $y=x+2$ (and thus end on the line $y=x$). The $\o$-horizontal steps are equipped with the weight~$1$ and the unit steps have weights according to the standard $e$-labelling. 

We compute the generating function of such paths in Corollary~\ref{cor:OrthogonalPaths} in a similar manner to the previous section. However, we also need to deal with the avoidance of the patterns that take care of the 
$m$-even orthogonal condition, that is, with trapped positions. It turns out that this combines nicely with the application of the 
Lindstr\"om--Gessel--Viennot Lemma~\ref{lem:LGV}. Namely, when applying the lemma to the generating functions of paths from Corollary~\ref{cor:OrthogonalPaths}, we may have pairs of paths that are weakly non-intersecting but not strongly non-intersecting. 
 With our step set such intersections may only occur when $\o$-horizontal steps intersect with two unit vertical steps. Therefore, we present a sign-reversing involution that shows that families of lattice paths with such intersections and with trapped positions indeed cancel after applying the Lindstr\"om--Gessel--Viennot Lemma~\ref{lem:LGV}.

\begin{lem} \label{lem:kOrthogonalPaths}
	Let $k$ be a positive integer and let $\{(1,0),(0,1),(2,0)\}$ be the step set such that $\o$-horizontal steps are only allowed to end on the line $y=x$.
Also let $a,b,c\in\mathbb{Z}$ and $n\in\mathbb{N}$ such that $a+b$ is even and
$(a,b)$ and $(c,2n+a+b-c)$ lie strictly above the line $y=x-1$.
Then the generating function of lattice paths from $(a,b)$ to $(c,2n+a+b-c)$ that do not cross the line $y=x-1$ and that have exactly $k$ $\o$-horizontal steps is
	\begin{equation*}
		\begin{cases}
		e_{c-b-2k+2}(\x^\pm) - e_{c-b-2k-2}(\x^\pm) & \text{if $b-a \geq 2$,}\\
		e_{c-a-2k}(\x^\pm) - e_{c-b-2k-2}(\x^\pm) & \text{otherwise},
		\end{cases}
	\end{equation*}
where $\x^\pm=(x_1,x_1^{-1},\dots,x_n,x_n^{-1})$.
\end{lem}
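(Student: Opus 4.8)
The plan is to mirror the proof of Lemma~\ref{lem:kSpecialOrthogonalPaths}, replacing each $\o$-horizontal step by a pair of unit vertical steps. Since an $\o$-horizontal step and a pair of vertical steps both carry weight $1$, this substitution is weight-preserving. Concretely, I would process the $k$ $\o$-horizontal steps one at a time, each time keeping the terminal part of the path fixed and shifting the initial part by $(+2,-2)$, exactly as in Figure~\ref{fig:lemmasp}. After all $k$ substitutions the starting point has moved from $(a,b)$ to $(a+2k,b-2k)$ while the endpoint $(c,2n+a+b-c)$ is unchanged, and the image consists only of unit horizontal and vertical steps. The inverse is the iterative procedure of locating, at the appropriate place, two consecutive vertical steps ending on one of the lines $y=x,y=x-4,\dots,y=x-4(k-1)$ and folding them back into an $\o$-horizontal step while shifting; establishing that this is a well-defined bijection onto the correct set of unit-step paths is the technical heart of the argument.

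Next I would pin down the constraints satisfied by the image. Because the vertical pair produced by the first $\o$-horizontal step starts on the line $y=x-(4k-2)$ and ends on $y=x-4(k-1)$, every image path meets $y=x-(4k-2)$. On the other hand, the original path stays weakly above $y=x-1$, and the segment pushed down the most is the initial one, which is translated by $(+2k,-2k)$; hence the image path never crosses $y=x-(4k+1)$, and this bound is sharp. Thus the generating function I am after equals that of unit-step paths from $(a+2k,b-2k)$ to $(c,2n+a+b-c)$ which intersect $y=x-(4k-2)$ but do not cross $y=x-(4k+1)$. Writing this as (paths meeting $y=x-(4k-2)$) minus (paths reaching $y=x-(4k+2)$) — legitimate because any path dipping as low as $y=x-(4k+2)$ must first meet $y=x-(4k-2)$ — reduces everything to two applications of the modified reflection principle of Lemma~\ref{lem:modifiedreflection}.

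For the subtracted term I reflect $(a+2k,b-2k)$ in $y=x-(4k+2)$, obtaining $(b+2k+2,a-2k-2)$ and hence the generating function $e_{c-b-2k-2}(\x^\pm)$; here $b-a\geq 0$ guarantees the start lies above this line in all cases. The first term is where the two cases originate, according to the position of the shifted start relative to $y=x-(4k-2)$, whose points satisfy $y-x=-(4k-2)$. When $b-a\geq 2$ the shifted start satisfies $y-x=b-a-4k\geq -(4k-2)$, so it lies on or above that line; reflecting it in $y=x-(4k-2)$ gives $(b+2k-2,a-2k+2)$ and the generating function $e_{c-b-2k+2}(\x^\pm)$, and when the start lies exactly on the line every path trivially meets it, so the count degenerates to the same value. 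When instead $b=a$ — the only remaining possibility, since $a+b$ even and $(a,b)$ strictly above $y=x-1$ force $b-a\in\{0,2,4,\dots\}$ — the shifted start lies strictly below $y=x-(4k-2)$, meeting that line is automatic, and the first term is simply the generating function of all paths from the start, namely $e_{c-a-2k}(\x^\pm)$. Subtracting yields the two stated expressions.

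The main obstacle I anticipate is not the reflection bookkeeping but the bijection itself: verifying that folding $\o$-horizontal steps into vertical pairs, together with its iterative inverse, is a genuine bijection onto \emph{exactly} the paths meeting $y=x-(4k-2)$ and not crossing $y=x-(4k+1)$. Unlike the odd orthogonal case of Lemma~\ref{lem:kSpecialOrthogonalPaths}, where the two relevant diagonals are adjacent, here the $\o$-horizontal steps are ``double'' and the critical diagonals are spaced four apart, lying at $y=x-4(k-j)$ for $j=1,\dots,k$; one must check that the inverse always finds a valid vertical pair to fold at each stage and that the diagonal case $b=a$, where the starting point sits on $y=x$, is treated correctly. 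Once this is secured, the generating function computation is routine.
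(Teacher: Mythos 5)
Your proposal is correct and follows essentially the same route as the paper's proof: replace each $\o$-horizontal step by a vertical double step (shifting the initial portion by $(2,-2)$ each time), observe that the image is exactly the set of unit-step paths from $(a+2k,b-2k)$ that meet $y=x-4k+2$ but do not cross $y=x-4k-1$, and then apply the modified reflection principle twice, with the case split $b-a\geq 2$ versus $b-a=0$ arising from whether the shifted starting point lies weakly above or strictly below the line $y=x-4k+2$. The details you flag as the technical heart (the iterative inverse of the folding map) are handled in the paper by the same rightmost-occurrence argument used in Lemma~\ref{lem:kSpecialOrthogonalPaths}.
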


\begin{proof}
	We proceed in a way similar to the proof of Lemma~\ref{lem:kSpecialOrthogonalPaths}. First, we replace every $\o$-horizontal step by a double vertical step whilst keeping the terminal part of the path but shifting the initial part accordingly. See Figure~\ref{fig:lemmaso} for an illustration. This results in a path from $(a+2k,b-2k)$ to $(c,2n+a+b-c)$ that intersects the line $y=x-4k+2$ but that does not cross the line $y=x-4k-1$.

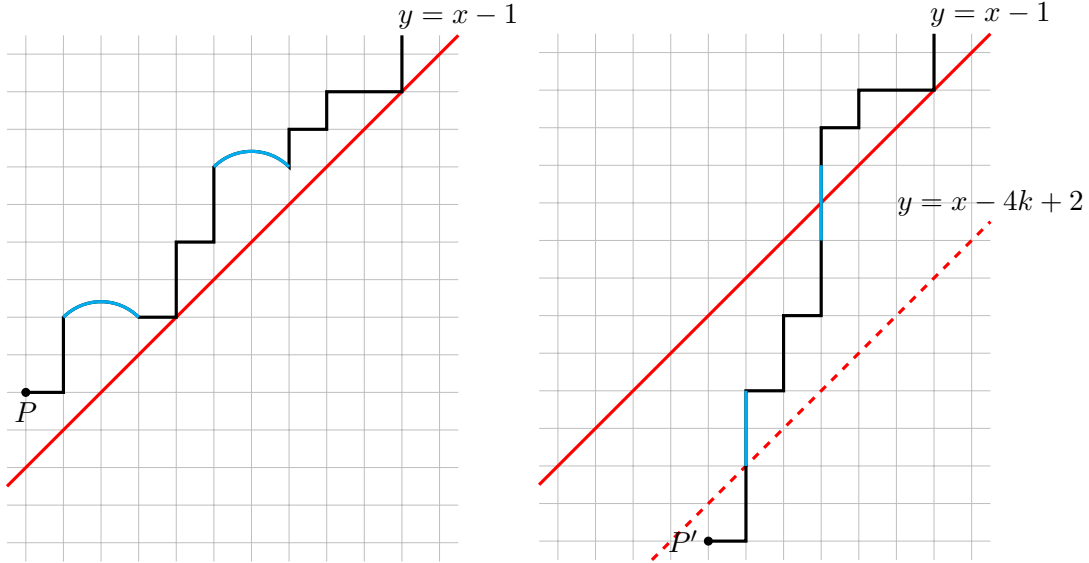
\begin{figure}
\begin{tikzpicture}
\begin{scope}[scale=.5]
	\draw[help lines,step=1cm, lightgray] (-.5,-2.5) grid (11.5,11.5);
	\draw[very thick, red] (-.5,-.5) -- (11.5,11.5);
	\draw[very thick] (0,2) --++ (1,0) --++ (0,2) to[out=45, in=135] ++(2,0) --++ (1,0) --++ (0,2) --++ (1,0) --++ (0,2) to[out=45, in=135] ++ (2,0) --++ (0,1) --++ (1,0) --++ (0,1) --++ (2,0) --++ (0,1.5);
	\filldraw[black] (0,2) circle (3pt) node[below]{$P$};	
	\node at (11.5,12) {$y=x-1$};
	\draw[very thick,cyan] (1,4) to[out=45, in=135] ++(2,0);
	\draw[very thick,cyan] (5,8) to[out=45, in=135] ++(2,0);

\end{scope}
\end{tikzpicture}
\begin{tikzpicture}
\begin{scope}[scale=.5]
	\draw[help lines,step=1cm, lightgray] (-.5,-2.5) grid (11.5,11.5);
	\draw[very thick, red] (-.5,-.5)-- (11.5,11.5);
	\draw[very thick, red, dashed] (2.5,-2.5)-- (11.5,6.5);
	\draw[very thick] (4,-2) --++ (1,0) --++ (0,2) --++(0,2) --++ (1,0) --++ (0,2) --++ (1,0) --++ (0,2) --++ (0,2) --++ (0,1) --++ (1,0) --++ (0,1) --++ (2,0) --++ (0,1.5);
	\filldraw[black] (4,-2) circle (3pt) node[left]{$P'$};
	\node at (11.5,12) {$y=x-1$};
	\node at (11.5,7) {$y=x-4k+2$};
	\draw[very thick,cyan] (5,0) --++(0,2);
	\draw[very thick,cyan] (7,6) --++(0,2);
\end{scope}
\end{tikzpicture}
\caption{Situation in Lemma~\ref{lem:kOrthogonalPaths} when replacing $\o$-horizontal steps by vertical steps of length~$2$.}
\label{fig:lemmaso}
\end{figure}

	 This mapping is again a weight-preserving bijection. We continue by using the modified reflection principle in Lemma~\ref{lem:modifiedreflection}. 
	 
	 In order to compute the generating function of paths from $(a+2k,b-2k)$ to $(c,2n+a+b-c)$ that intersect the line $y=x-4k+2$, we have to distinguish two cases. If $(a+2k,b-2k)$ already lies below $y=x-4k+2$, that is, if $b-a=0$, then the generating function is simply $e_{c-a-2k}(\x^\pm)$. Otherwise, we reflect $(a+2k,b-2k)$ along $y=x-4k+2$ and obtain $(b+2k-2,a-2k+2)$. Thus, the generating function of these paths is $e_{c-b-2k+2}(\x^\pm)$. 
	 
	 Lattice paths from $(a+2k,b-2k)$ to $(c,2n+a+b-c)$ which cross the line $y=x-4k-1$ touch also the line $y=x-4k-2$. Reflecting $(a+2k,b-2k)$ along  $y=x-4k-2$ gives $(b+2k+2,a-2k-2)$, and thus the generating function of these paths is $e_{c-b-2k-2}(\x^\pm)$. This completes the proof. 
\end{proof}

Using Lemma~\ref{lem:SymplecticPaths}, we know that the generating function of lattice paths from $(a,b)$ to $(c,2n+a+b-c)$ that do not go below the line $y=x-1$ and have no $\o$-horizontal steps is $e_{c-a}(\x^\pm)-e_{c-b-2}(\x^\pm)$. By summing over all possible numbers of $\o$-horizontal steps, we finally obtain in the case $b-a \geq 2$:
\begin{equation*}
	\big( e_{c-a}(\x^\pm)-e_{c-b-2}(\x^\pm) \big) + \sum_{k \geq 1} \big( e_{c-b-2k+2}(\x^\pm)-e_{c-b-2k-2}(\x^\pm) \big) = e_{c-a}(\x^\pm)+e_{c-b}(\x^\pm).
\end{equation*}
On the other hand, if $b-a=0$, we obtain
\begin{equation*}
	\big( e_{c-a}(\x^\pm)-e_{c-a-2}(\x^\pm) \big) + \sum_{k \geq 1} \big( e_{c-a-2k}(\x^\pm)-e_{c-a-2k-2}(\x^\pm) \big) = e_{c-a}(\x^\pm).
\end{equation*}

\begin{cor}\label{cor:OrthogonalPaths}
Let $a,b,c\in\mathbb{Z}$ and $n\in\mathbb{N}$ such that $a+b$ is even and the points $(a,b)$ and $(c,2n+a+b-c)$ lie strictly above the line $y=x-1$. Then the generating function of lattice paths from $(a,b)$ to $(c,2n+a+b-c)$ with step set $\{(1,0),(0,1),(2,0)\}$ that do not cross the line $y=x-1$ and where $\o$-horizontal steps are only allowed to end on the line $y=x$ is
	\begin{equation*}
		\begin{cases}
		e_{c-a}(\x^\pm) & \text{if $b-a=0$,} \\
		e_{c-a}(\x^\pm)+e_{c-b}(\x^\pm) & \text{if $b-a \geq 2$,}
		\end{cases}
	\end{equation*}	
where $\x=(x_1,x_1^{-1},\dots,x_n,x_n^{-1})$.
\end{cor}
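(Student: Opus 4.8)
The plan is to partition the admissible paths according to their number $k \geq 0$ of $\o$-horizontal steps and to sum the corresponding generating functions, all of which have already been computed. The paths with $k=0$ carry no $\o$-horizontal step and are thus precisely the symplectic paths of Lemma~\ref{lem:SymplecticPaths}, contributing $e_{c-a}(\x^\pm)-e_{c-b-2}(\x^\pm)$; the contribution of the paths with a fixed $k \geq 1$ is supplied by Lemma~\ref{lem:kOrthogonalPaths}. Since every path has only finitely many $\o$-horizontal steps, and since $e_r(\x^\pm)=0$ once $r<0$, the sought generating function is the (finite) sum of these contributions over all $k \geq 0$.

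First I would record that the hypotheses make the case split of Lemma~\ref{lem:kOrthogonalPaths} exhaustive: because $(a,b)$ lies strictly above $y=x-1$ we have $b \geq a$, and as $a+b$ is even the difference $b-a$ is a nonnegative even integer. Hence exactly one of $b-a=0$ and $b-a \geq 2$ occurs, matching the two branches of the corollary.

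For $b-a \geq 2$ I would add the $k=0$ term to $\sum_{k \geq 1}\big(e_{c-b-2k+2}(\x^\pm)-e_{c-b-2k-2}(\x^\pm)\big)$. The one point to handle with care is that this is not a single-step telescoping: the positive indices run through $c-b, c-b-2, c-b-4, \dots$ while the negative indices run through $c-b-4, c-b-6, \dots$, so after cancellation only the two leading positive terms $e_{c-b}(\x^\pm)+e_{c-b-2}(\x^\pm)$ survive. Adding the $k=0$ contribution $e_{c-a}(\x^\pm)-e_{c-b-2}(\x^\pm)$ then gives $e_{c-a}(\x^\pm)+e_{c-b}(\x^\pm)$, as desired. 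The case $b-a=0$ is the same computation with $b=a$: now the sum over $k \geq 1$ is the ordinary telescoping sum $\sum_{k \geq 1}\big(e_{c-a-2k}(\x^\pm)-e_{c-a-2k-2}(\x^\pm)\big)=e_{c-a-2}(\x^\pm)$, and combining with the $k=0$ contribution $e_{c-a}(\x^\pm)-e_{c-a-2}(\x^\pm)$ leaves $e_{c-a}(\x^\pm)$.

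There is no genuine obstacle here beyond this bookkeeping, since the substantive work was carried out in Lemma~\ref{lem:kOrthogonalPaths}. The only subtlety worth flagging is the interleaving of indices in the $b-a \geq 2$ telescoping, which is exactly why two terms survive there in contrast to the single surviving term when $b-a=0$.
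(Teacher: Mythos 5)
Your proposal is correct and follows exactly the paper's argument: decompose by the number $k$ of $\o$-horizontal steps, take the $k=0$ contribution from Lemma~\ref{lem:SymplecticPaths} and the $k\geq 1$ contributions from Lemma~\ref{lem:kOrthogonalPaths}, and evaluate the resulting telescoping sums in the two cases $b-a=0$ and $b-a\geq 2$. Your explicit remark about the step-two interleaving of indices in the $b-a\geq 2$ case, which is why two terms survive there, is a correct reading of the same computation the paper displays.
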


We now apply the Lindstr\"om--Gessel--Viennot Lemma~\ref{lem:LGV} to all lattice paths from $(\mu'_j-j+1,2m-\mu'_j+j-1)$ to $(\lambda'_i-i+1,2n+2m-\la_i'+i-1)$ for $1 \leq i,j \leq N$ with $\la_1\leq N$, $l(\mu)\leq m$, $l(\la)\leq n+m$ and step set $\{(1,0),(0,1),(2,0)\}$ such that the paths weakly stay above the line $y=x-1$ and $\o$-horizontal steps are only allowed to end on the line $y=x$.
By Corollary~\ref{cor:OrthogonalPaths} this gives
\begin{equation}\label{eq:SkewOrthogonalJT}
	\frac{1}{2^{[m=l(\mu)]}} \det_{1 \leq i,j \leq N} \big( e_{\lambda'_i -\mu'_j-i+j}(\x^\pm) + e_{\lambda'_i + \mu'_j-i-j-2m+2}(\x^\pm) \big).
\end{equation}
Note that the prefactor $\frac{1}{2}$ takes care of the case $m=l(\mu)$ because then the first column of the underlying matrix in the previous determinant is $(2 e_{\lambda'_1-\mu'_1}(\x^\pm),2 e_{\lambda'_2-\mu'_1-1}(\x^\pm),\dots,\allowbreak 2 e_{\lambda'_N-\mu'_1-N+1}(\x^\pm))^{\top}$.

To complete the proof, we want to show that \eqref{eq:SkewOrthogonalJT} indeed represents a formula for $\o^m_{\lambda/\mu}(\x)$. This is done by a sign-reversing involution. First, we recall that the Lindstr\"om--Gessel--Viennot Lemma~\ref{lem:LGV} \eqref{eq:SkewOrthogonalJT} yields a signed enumeration of weakly non-intersecting lattice paths where $\o$-horizontal steps might intersect with vertical steps. Figure~\ref{fig:OrthogonalPathsInvolution} shows an example of lattice paths with such  intersections. On the other hand, there might still be trapped positions. The sign-reversing involution will transform an intersection into a trapped position, or vice versa.

\begin{figure}[htb]
	\centering
\begin{tikzpicture}[scale=.5,baseline=(current bounding box.center)]
	\draw [help lines,step=1cm,lightgray] (-3.75,1.25) grid (8.75,10.75);
	\draw[red,very thick] (2.25,1.25) -- (8.75,7.75); 
	
	\filldraw[black] (2,2) circle (3pt);
	\draw[very thick] (2,2) --++ (1,0) --++ (0,1) --++ (0,1) --++ (1,0) --++ (0,1) --++ (0,1) --++ (1,0) --++ (0,1);
	\draw[very thick] (5,7) to[out=45, in=135] (7,7);
	\draw[very thick] (7,7) --++ (1,0) --++ (0,1) --++ (0,1) --++ (0,1) --++ (0,.75);
	
	\filldraw[black] (0,4) circle (3pt);
	\draw[very thick] (0,4) --++ (1,0) --++ (1,0) --++ (0,1) --++ (1,0);
	\draw[very thick] (3,5) to[out=45, in=135] (5,5);
	\draw[very thick] (5,5) --++ (1,0)  --++ (0,1) --++ (0,1) --++ (0,1) --++ (1,0) --++ (0,1) --++ (0,1) --++ (0,.75);
	
	\filldraw[black] (-1,5) circle (3pt);
	\draw[very thick] (-1,5) --++ (0,1) --++ (1,0) --++ (1,0) --++ (1,0) --++ (1,0) --++ (0,1) --++ (0,1) --++ (1,0) --++ (1,0) --++ (0,1) --++ (0,1) --++ (0,.75);
	
	\filldraw[black] (-3,7) circle (3pt);
	\draw[very thick] (-3,7) --++ (1,0) --++ (1,0) --++ (1,0) --++ (1,0) --++ (1,0) --++ (0,1) --++ (0,1) --++ (1,0) --++ (0,1) --++ (1,0) --++ (0,.75);
\end{tikzpicture}
\caption{Weakly non-intersecting lattice paths that are enumerated by the Lindstr\"om--Gessel--Viennot Lemma.}
\label{fig:OrthogonalPathsInvolution}
\end{figure}
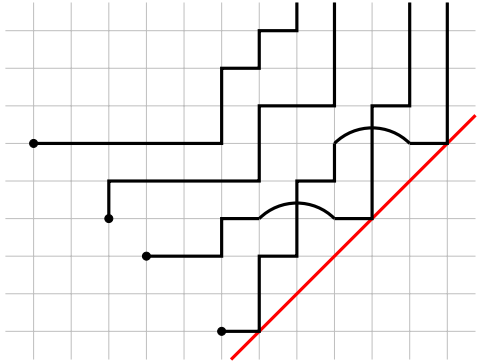

The intersections between $\o$-horizontal steps and a pair of unit vertical steps only occur along the line $y=x+1$. Consider such an intersection and assume it is at the point $(d,d+1)$. That means we have an $\o$-horizontal step $(d-1,d+1) \to (d+1,d+1)$ in one path, and two unit vertical steps $(d,d) \to (d,d+1) \to (d,d+2)$ in the other path. We perform the following local changes to the family of lattice paths along the line $y+x=2d+1$ as 
indicated in Figure~\ref{fig:OrthogonalPathsLocalChanges}: First we resolve the crossing by replacing it by the following two turns $(d,d) \to (d+1,d) \to (d+1,d+1)$ and $(d-1,d+1) \to (d,d+1) \to (d,d+2)$, which will then 
appear in different paths. Note that this changes the sign of the permutation, however, it does not change the weight.
Then we look for the unoccupied point $(x_0,y_0)$ on the line $y+x=2d+1$ that is above the line $y=x-1$ and has minimal $y_0-x_0$. This point is unique and between that point and the line $y=x-1$ there is a series of left turns along the line $y+x=2d+1$. We take the left turn $(x_0,y_0-1) \to (x_0+1,y_0-1) \to (x_0+1,y_0)$ and replace it by $(x_0,y_0-1) \to (x_0,y_0) \to (x_0+1,y_0)$.

\begin{figure}[htb]
	\centering
	\begin{tikzpicture}[scale=.5,baseline=(current bounding box.center)]
		\draw [help lines,step=1cm,lightgray] (.25,-.75) grid (5.75,4.75);
		\draw[very thick, red] (4.25,-.75) -- (5.75,.75); 
		
		\draw[very thick] (3,1) to[out=45, in=135] (5,1);
		
		\draw[very thick] (4,0) -- (4,1) -- (4,2);
		
		\draw[very thick] (2,2) -- (3,2) -- (3,3);
		
		\draw[very thick] (1,3) -- (2,3) -- (2,4);	
		\filldraw[red] (2,3) circle (3pt);
	\end{tikzpicture}
	\quad$\longleftrightarrow$\quad
	\begin{tikzpicture}[scale=.5,baseline=(current bounding box.center)]
		\draw [help lines,step=1cm,lightgray] (.25,-.75) grid (5.75,4.75);
		\draw[very thick, red] (4.25,-.75) -- (5.75,.75); 
		
		\draw[very thick] (3,1) -- (4,1) -- (4,2);
		
		\draw[very thick] (4,0) -- (5,0) -- (5,1);
		
		\draw[very thick] (2,2) -- (3,2) -- (3,3);
		
		\draw[very thick] (1,3) -- (1,4) -- (2,4);	
		
		\filldraw[red] (2,3) circle (3pt);
	\end{tikzpicture}
	\caption{\label{fig:OrthogonalPathsLocalChanges} Local changes between lattice paths with an intersection of an $\o$-horizontal step with two vertical steps (left) and with a trapped position (right).}
\end{figure}
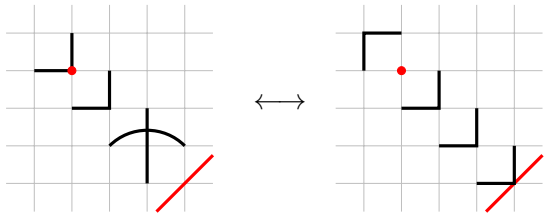

Note that this local transformation only changes the sign of the weight. In addition, we see that the resulting constellations such as on the right of Figure~\ref{fig:OrthogonalPathsLocalChanges} are exactly the trapped positions that cannot occur in the lattice path interpretation of skew $(n,m)$-even orthogonal tableaux due to the $m$-even orthogonal condition. This observation leads us to the announced sign-reversing involution.

Consider the families of lattice paths whose signed enumeration is given by \eqref{eq:SkewOrthogonalJT}. If it contains a crossing of an $\o$-horizontal step with two vertical steps or if it contains a trapped position
then choose a canonical occurrence as follows.
Both the crossings and the trapped position are located on lines $y+x=2d+1$ for integers $d$. We consider the crossing or the trapped position for which the $d$ is minimal. Then we perform the local changes as exemplified in Figure~\ref{fig:OrthogonalPathsLocalChanges}. This mapping changes the sign of the weight and is readily invertible. We are left with families of lattice paths that have neither crossings nor trapped positions, which are exactly those that correspond to skew $(n,m)$-even orthogonal tableaux. This finally proves that
\begin{equation*}
	\o^m_{\lambda/\mu}(\x)=\frac{1}{2^{[m=l(\mu)]}} \det_{1 \leq i,j \leq N} \big( e_{\lambda'_i -\mu'_j-i+j}(\x^\pm) + e_{\lambda'_i + \mu'_j-i-j-2m+2}(\x^\pm) \big), 
\end{equation*} 
and concludes the combinatorial proof of Theorem~\ref{thm:dual} \eqref{dual_o}.

\section{Proofs of the Jacobi--Trudi formulae} 
\label{sec:JacobiTrudi}
The purpose of this section is to derive ordinary Jacobi--Trudi-type formulae
for the characters $\sp^m_{\la/\mu}$, $\so^m_{\la/\mu}$ and $\o^m_{\la/\mu}$.
We achieve this by an algebraic approach using complementary cofactors,
which is one of the standard ways to prove the equivalence of the 
Jacobi--Trudi formula for Schur functions and its dual.
These computations all work in the ring of symmetric functions on a countable
alphabet $\X=(x_1,x_2,x_3,\dots)$, and so in this section we work in such
generality.
The formulae for the actual characters are obtained by substituting 
$\x^\pm=(x_1,x_1^{-1},\dots,x_n,x_n^{-1})$ for $\X$.

Given an $N\times N$ matrix $A$ and a pair of sequences
$\sigma,\tau$ of the same length, we let 
$A^\sigma_\tau$ denote the matrix obtained by extracting rows and columns with 
indices from $\sigma$ and $\tau$ respectively in the given order.
The basic lemma we need is the following \cite[Lemma~A.42]{FulHar91}.
\begin{lem}\label{Lem_cofactor}
Let $A$ and $B$ be $N\times N$ mutually inverse matrices
and let $(\sigma,\sigma')$ and $(\tau,\tau')$ be pairs of complementary 
subsets of $\{1,\dots,N\}$ such that $\abs{\sigma}=\abs{\tau}$.
Then
\[
\det(A^\sigma_\tau)=\varepsilon \det(A)\det\big(B^{\tau'}_{\sigma'}\big),
\]
where $\varepsilon$ is the product of the signs of the permutations
formed by the words $\sigma\sigma'$ and $\tau\tau'$.
\end{lem}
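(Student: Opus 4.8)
The plan is to prove this standard fact (Jacobi's complementary-minor identity) by reducing to a block decomposition of $A$ and its inverse and then applying a one-line block-triangular factorisation. First I would introduce the row-permutation matrix $U$ that rearranges the rows of a matrix into the order $\sigma$ followed by $\sigma'$, and the column-permutation matrix $V$ that rearranges columns into the order $\tau$ followed by $\tau'$. Setting $\tilde A := U A V$, the minor $A^\sigma_\tau$ becomes the leading block, so that $\tilde A = \begin{pmatrix} A^\sigma_\tau & A^\sigma_{\tau'} \\ A^{\sigma'}_\tau & A^{\sigma'}_{\tau'}\end{pmatrix}$. Since $\det U = \sgn(\sigma\sigma')$ and $\det V = \sgn(\tau\tau')$ are exactly the signs of the one-line words, $\det\tilde A = \varepsilon\det A$, which already accounts for the claimed prefactor.

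The next step is to locate $B^{\tau'}_{\sigma'}$ inside $\tilde A^{-1}$. Since permutation matrices are orthogonal, $\tilde A^{-1} = V^{-1} B U^{-1}$ is just $B$ with its rows reordered by $\tau,\tau'$ and its columns reordered by $\sigma,\sigma'$; in block form $\tilde A^{-1} = \begin{pmatrix} B^\tau_\sigma & B^\tau_{\sigma'} \\ B^{\tau'}_\sigma & B^{\tau'}_{\sigma'}\end{pmatrix}$, whose trailing block is precisely $B^{\tau'}_{\sigma'}$. Thus the lemma reduces to the assertion that the leading block of an invertible matrix and the complementary trailing block of its inverse satisfy $\det(\text{leading block of }\tilde A) = \det(\tilde A)\,\det(\text{trailing block of }\tilde A^{-1})$.

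The heart of the argument is this last block identity, which I would prove directly. Writing $\tilde A = \begin{pmatrix} P & Q \\ R & S\end{pmatrix}$ and $\tilde A^{-1} = \begin{pmatrix} P' & Q' \\ R' & S'\end{pmatrix}$, the $(1,2)$ and $(2,2)$ block relations coming from $\tilde A\,\tilde A^{-1} = I$ give $PQ' + QS' = 0$ and $RQ' + SS' = I$, and hence the factorisation $\tilde A \begin{pmatrix} I & Q' \\ 0 & S'\end{pmatrix} = \begin{pmatrix} P & 0 \\ R & I\end{pmatrix}$. Taking determinants of the two block-triangular matrices yields $\det(\tilde A)\det(S') = \det(P)$, that is, $\det(A^\sigma_\tau) = \det(\tilde A)\det(B^{\tau'}_{\sigma'}) = \varepsilon\det(A)\det(B^{\tau'}_{\sigma'})$, as claimed.

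The only genuine obstacle is sign bookkeeping: I must check that $\det U$ and $\det V$ equal the signs of the words $\sigma\sigma'$ and $\tau\tau'$ with no spurious extra factor, and that passing to the inverse does not flip them. This is harmless, since a permutation and its inverse have equal sign, so the conjugation by the orthogonal matrices $U,V$ that produces the block form of $\tilde A^{-1}$ introduces no further sign. Everything else is routine block-matrix algebra.
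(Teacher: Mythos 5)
Your proof is correct. Note that the paper itself gives no proof of this lemma at all: it is quoted verbatim from Fulton--Harris \cite[Lemma~A.42]{FulHar91}, so there is nothing internal to compare against. Your argument is the standard self-contained proof of Jacobi's complementary-minor identity, and every step checks out: conjugating by the permutation matrices $U,V$ puts $A^\sigma_\tau$ in the leading block of $\tilde A=UAV$ and contributes exactly $\det U\det V=\sgn(\sigma\sigma')\sgn(\tau\tau')=\varepsilon$ (the invariance of sign under inversion disposes of the only possible ambiguity); the identification of $B^{\tau'}_{\sigma'}$ as the trailing block of $\tilde A^{-1}=V^{-1}BU^{-1}$ is right, using $U^{-1}=U^{\top}$, $V^{-1}=V^{\top}$; and the block relations $PQ'+QS'=0$, $RQ'+SS'=I$ from $\tilde A\tilde A^{-1}=I$ do yield the factorisation $\tilde A\bigl(\begin{smallmatrix}I&Q'\\0&S'\end{smallmatrix}\bigr)=\bigl(\begin{smallmatrix}P&0\\R&I\end{smallmatrix}\bigr)$, whence $\det(\tilde A)\det(S')=\det(P)$ by block-triangularity. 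The blocks $P$ and $S'$ are square precisely because $\abs{\sigma}=\abs{\tau}$, and invertibility of $A$ is guaranteed by the hypothesis that $A$ and $B$ are mutually inverse, so the factorisation argument applies without further assumptions. This is exactly the kind of elementary linear-algebra proof one would want to supply in place of the citation.
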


Recall that the matrices
\[
\big(e_{i-j}(\X)\big)_{1\leq i,j\leq r}
\quad\text{and}\quad
\big((-1)^{i-j}h_{i-j}(\X)\big)_{1\leq i,j\leq r},
\]
are lower-unitriangular, i.e., lower-triangular with diagonal entries all $1$, and where as before
$\X=(x_1,x_2,x_3,\dots)$. Moreover, thanks to
the relationship \cite[p.~21]{Macdonald}
\begin{equation}\label{Eq_convolution}
\sum_{k=0}^r (-1)^{k}e_{r-k}(\X)h_{k}(\X)=\delta_{r,0}
\end{equation}
where $\delta_{a,b}$ is the usual Kronecker delta, they are mutually inverse.
We actually require the following slightly more general pair of matrices.
\begin{lem}
For $m,N\in\mathbb{N}$, $k\in\mathbb{Z}$ and a parameter $t$ the matrices 
\[
\mathcal{E}(N,m,k;t)
\coloneqq \big(e_{i-j}(\X)
+\left[j<m+\lceil k/2\rceil\right]te_{i+j-2m-k}(\X)\big)_{1\leq i,j\leq N}
\]
and
\[
\mathcal{H}(N,m,k;t) \coloneqq \big((-1)^{i-j}\big(h_{i-j}(\X)
-\left[i>m+\lfloor k/2\rfloor\right]
(-1)^kth_{2m-i-j+k}(\X)\big)\big)_{1\leq i,j\leq N}
\]
are lower-unitriangular and mutually inverse.
\end{lem}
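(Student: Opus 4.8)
The plan is to establish the two assertions in turn; since $\mathcal{E}(N,m,k;t)$ and $\mathcal{H}(N,m,k;t)$ are square, mutual inverseness will follow once a single product, say $\mathcal{E}(N,m,k;t)\mathcal{H}(N,m,k;t)$, is shown to be the identity. Throughout I abbreviate $M\coloneqq m+\lceil k/2\rceil$ and $M'\coloneqq m+\lfloor k/2\rfloor$, and I record the two arithmetic facts that drive everything: $M=M'+[k\text{ odd}]$, so that $M'$ and $M$ are consecutive (or equal) integers, and $2m+k-M'=M$. Lower-unitriangularity is a direct index count. In $\mathcal{E}$ the correction term is active only for $j<M$, and in that range $i+j-2m-k<0$ whenever $i\leq j$, so it contributes neither on nor above the diagonal; with $e_0=1$ and $e_r=0$ for $r<0$ this gives the claim. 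The computation for $\mathcal{H}$ is symmetric: its correction is active only for $i>M'$, where $2m-i-j+k<0$ whenever $i\geq j$.

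For the product I would expand $(\mathcal{E}\mathcal{H})_{i,l}=\sum_{j}\mathcal{E}_{i,j}\mathcal{H}_{j,l}$ into four sums, according to whether each factor supplies its principal or its correction term. The principal--principal sum is $\sum_{j}(-1)^{j-l}e_{i-j}(\X)h_{j-l}(\X)$, which equals $\delta_{i,l}$ by the convolution relation \eqref{Eq_convolution}. The correction--correction sum carries the product of Iverson brackets $[j<M][j>M']$; since no integer lies strictly between the consecutive values $M'$ and $M$, this sum is empty.

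The heart of the matter is that the two mixed sums cancel, and this is exactly where the identity $2m+k-M'=M$ is used. Applying the involution $j\mapsto 2m+k-j$ to the sum built from the principal term of $\mathcal{E}$ and the correction term of $\mathcal{H}$ sends $e_{i-j}\mapsto e_{i+j-2m-k}$ and $h_{2m-j-l+k}\mapsto h_{j-l}$, converts the threshold $[j>M']$ into $[j<M]$, and emits a factor $(-1)^{k}$ that squares against the $(-1)^{k}$ already carried by the $\mathcal{H}$-correction. What remains is precisely the negative of the sum built from the correction term of $\mathcal{E}$ and the principal term of $\mathcal{H}$. Hence the two mixed sums cancel and $(\mathcal{E}\mathcal{H})_{i,l}=\delta_{i,l}$, as required.

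The step I expect to be delicate is making this reflection rigorous. The matrix product runs over $1\leq j\leq N$, whereas $j\mapsto 2m+k-j$ is a bijection of $\mathbb{Z}$ rather than of $\{1,\dots,N\}$, so I must argue that each mixed summand is in fact supported inside $\{1,\dots,N\}$ and that extending the range to all of $\mathbb{Z}$ is therefore harmless. For $j\leq 0$ one summand is killed by $h_{j-l}$ (negative index, since $l\geq 1$) and the other by the threshold $j>M'$ (using $M'\geq 0$ in the parameter ranges at hand); for $j>N$ the summand containing $e_{i-j}$ is killed by its negative index. The one case requiring thought is a term with $j>N$ in the sum carrying the $\mathcal{E}$-correction $e_{i+j-2m-k}$: survival would demand $2m+k-i\leq j<M$, hence $i>M'$, which is impossible once $M\geq N+2$, because then $M'\geq N+1>i$. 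The same support bookkeeping also justifies collapsing the principal--principal sum to $\delta_{i,l}$ over the truncated range, completing the argument.
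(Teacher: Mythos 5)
Your proposal is correct and follows essentially the same route as the paper: expand the product, kill the principal--principal term with the convolution identity, note the correction--correction term is vacuous, and cancel the two mixed sums by the reindexing built on $2m+k-M'=M$ (the paper packages your involution $j\mapsto 2m+k-j$ as two separate substitutions onto a common summand, and handles the support issues by observing both reindexed sums vanish outside a common range). One trivial slip: in the unitriangularity argument for $\mathcal{H}$ the relevant regime is $i\leq j$ (on or above the diagonal), where $i>M'$ and $j\geq i$ force $2m-i-j+k<0$; as written, ``whenever $i\geq j$'' is both the wrong case and literally false.
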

\begin{proof}
The $(i,j)$-th entry in the product of the two matrices is
\begin{multline*}
\sum_{\ell=0}^N(-1)^{\ell-j}e_{i-\ell}(\X)h_{\ell-j}(\X)
+\sum_{\ell=0}^{m+\lceil k/2\rceil-1}(-1)^{\ell-j}te_{i+\ell-2m-k}(\X)
h_{\ell-j}(\X)\\
-\sum_{\ell=m+\lfloor k/2\rfloor +1}^N
(-1)^{\ell-j+k}te_{i-\ell}(\X)h_{2m-\ell-j+k}(\X),
\end{multline*}
where we assume that $i\geq j$ since the lower-triangularity is clear from 
the definition.
The first sum in this expression simplifies to
\[
\sum_{\ell=0}^N(-1)^{\ell-j}e_{i-\ell}(\X)h_{\ell-j}(\X)
=\sum_{\ell=0}^{i-j}(-1)^\ell e_{i-j-\ell}(\X)h_\ell(\X)=\delta_{i-j,0},
\]
by \eqref{Eq_convolution}.
For the remaining sums, we obtain by substituting $\ell\mapsto m-\ell+\lceil k/2\rceil-1$ in the first sum and $\ell\mapsto m+\ell+\lfloor k/2\rfloor+1$ in the second sum
\begin{align*}
&\sum_{\ell=0}^{m+\lceil k/2\rceil-1}\!(-1)^{\ell-j}te_{i+\ell-2m-k}(\X)
h_{\ell-j}(\X)
-\!\sum_{\ell=m+\lfloor k/2\rfloor +1}^N\!
(-1)^{\ell-j+k}te_{i-\ell}(\X)h_{2m-\ell-j+k}(\X) \\
&\qquad=
\sum_{\ell=0}^{m+\lceil k/2\rceil-1}(-1)^{\ell-j+\lceil k/2\rceil+m+1}
te_{i-\ell-m-\lfloor k/2\rfloor-1}(\X)h_{m+\lceil k/2\rceil-\ell-j-1}(\X)\\
&\qquad\qquad-\sum_{\ell=0}^{N-m-\lfloor k/2\rfloor-1}
(-1)^{\ell-j+\lceil k/2\rceil+m+1}te_{i-\ell-m-\lfloor k/2\rfloor-1}(\X)
h_{m+\lceil k/2\rceil-\ell-j-1}(\X)\\
&\qquad=0,
\end{align*}
where the last equality follows since the summands are equal and vanish unless 
the index satisfies
$\ell\leq\min\{m+\lceil k/2\rceil-1,N-m-\lfloor k/2\rfloor-1\}$.
\end{proof}

Recall from \cite[p.~3]{Macdonald} that for a partition $\la\subseteq (N^M)$ 
the sets
\begin{equation}
\label{eq:disjoint sets}
\{\la_i+M-i+1:1\leq i\leq M\} \quad\text{and}\quad
\{M+j-\la_j':1\leq j\leq N\}
\end{equation}
form a disjoint union of $\{1,\dots,N+M\}$.
This implies that for partitions $\mu\subseteq\la\subseteq(N^M)$ 
the sequences
\begin{align*}
(\sigma,\sigma')& \coloneqq
(\la_1'+N,\dots,\la_N'+1,N-\la_1+1,\dots,N+M-\la_M)\\
(\tau,\tau')& \coloneqq (\mu_1'+N,\dots,\mu_N'+1,N-\mu_1+1,\dots,N+M-\mu_M).
\end{align*}
form permutations of $\{1,\dots,N+M\}$.
Applying Lemma~\ref{Lem_cofactor} with these choices of $(\sigma,\sigma')$ and 
$(\tau,\tau')$,
$A=\mathcal{E}(N+M,m,k;t)$ and $B=\mathcal{H}(N+M,m,k;t)$ we obtain
that the determinants
\begin{subequations}\label{Eq_general-dual}
\begin{equation}\label{Eq_general-duala}
\det_{1\leq i,j\leq N}\big(
e_{\la_i'-\mu_j'-i+j}(\X)+[N+\mu_j'-j+1<m+\lceil k/2\rceil]
te_{\la_i'+\mu_j'-i-j+2(N-m+1)-k}(\X)\big)
\end{equation}
and
\begin{equation}\label{Eq_general-dualb}
\det_{1\leq i,j\leq M}\big(
h_{\la_i-\mu_j-i+j}(\X)
-[N-\mu_j+j>m+\lfloor k/2\rfloor](-1)^k
th_{\la_i+\mu_j-i-j+2(m-N)+k}(\X)\big),
\end{equation}
\end{subequations}
are equal.
Note that in this case the product of the signs of the permutations 
is $\varepsilon=(-1)^{\abs{\la}+\abs{\mu}}$, which is a consequence of the proof that \eqref{eq:disjoint sets} form a disjoint union of
$\{1,\dots,N+M\}$.
If one sets $t=0$ in \eqref{Eq_general-dual} then 
\[
\det_{1\leq i,j\leq N}\big(e_{\la_i'-\mu_j'-i+j}(\X)\big)
=\det_{1\leq i,j\leq M}\big(h_{\la_i-\mu_j-i+j}(\X)\big),
\]
showing that the Jacobi--Trudi formula and its dual are equal.
The dual forms of our Jacobi--Trudi formulae for the skew characters
$\sp^m_{\la/\mu}, \so^m_{\la/\mu}$ and $\o^m_{\la/\mu}$ 
are similarly contained in \eqref{Eq_general-dual}.
We can now prove Theorem~\ref{thm:hformulae}.

\begin{proof}[Proof of Theorem~\ref{thm:hformulae}]
Fix partitions $\mu\subseteq\la$ such that $l(\mu)\leq m$,
$l(\la)\leq n+m$ and $\la\subseteq(N^M)$.
For the symplectic case we set $(m,k,t)\mapsto(N+m,2,-1)$ in 
\eqref{Eq_general-dual}, which gives
\begin{multline*}
\det_{1\leq i,j\leq N}\big(
e_{\la_i'-\mu_j'-i+j}(\X)-[\mu_j'-j<m]e_{\la_i'+\mu_j'-i-j-2m}(\X)\big)\\
=\det_{1\leq i,j\leq M}\big(
h_{\la_i-\mu_j-i+j}(\X)+[j-\mu_j>m+1]h_{\la_i+\mu_j-i-j+2m+2}(\X)\big).
\end{multline*}
Since $\mu_j'\leq m$ for $1\leq j\leq N$ we always have two terms in 
each entry of the determinant on the left. 
After replacing $\X$ by $(x_1,x_1^{-1},\dots,x_n,x_n^{-1})$ it is equal to 
$\sp_{\la/\mu}^m$ by \eqref{dual_sp}.
Further, $m+\mu_j-j+1<0$ is only true for $j>m+1$, so the determinant on
the right-hand side becomes \eqref{Eq_sph} with $N\mapsto M$ and the same 
variable substitution.

Instead taking $(m,k,t)\mapsto(N+m,1,1)$ in \eqref{Eq_general-dual}
produces
\begin{multline*}
\det_{1\leq i,j\leq N}\big(
e_{\la_i'-\mu_j'-i+j}(\X)+[\mu_j'-j<m]e_{\la_i'+\mu_j'-i-j-2m+1}(\X)\big)\\
=\det_{1\leq i,j\leq M}\big(
h_{\la_i-\mu_j-i+j}(\X)+[j-\mu_j>m]h_{\la_i+\mu_j-i-j+2m+1}(\X)\big).
\end{multline*}
By the same arguments as above the left-hand side is equal to $\so_{\la/\mu}^m$
and the right-hand side to \eqref{Eq_soh} with $N\mapsto M$,
after appropriate substitution of
variables.

Finally, choosing $(m,k,t)\mapsto(N+m,0,1)$ in \eqref{Eq_general-dual}
yields
\begin{multline*}
\det_{1\leq i,j\leq N}\big(
e_{\la_i'-\mu_j'-i+j}(\X)+[\mu_j'-j<m-1]e_{\la_i'+\mu_j'-i-j-2m+2}(\X)\big)\\
=\det_{1\leq i,j\leq M}\big(
h_{\la_i-\mu_j-i+j}(\X)-[j-\mu_j>m]h_{\la_i+\mu_j-i-j+2m}(\X)\big).
\end{multline*}
The right-hand clearly gives \eqref{Eq_oh} with $N\mapsto M$.
If $m=l(\mu)=\mu_1'$ then the $(1,j)$-th entry of the determinant on the left-hand 
side is equal to $e_{\la_i'-\mu_j'-i+j}(\X)$ which is half of the $(1,j)$-th entry of \eqref{dual_o}. Hence by factoring out $\frac{1}{2}$  from the first column, the left-hand side of the above equation is equal to \eqref{dual_o}.
\end{proof}

In \cite{FulKra97}, Fulmek and Krattenthaler derive the Jacobi--Trudi formulae for 
$\sp_\la$ and $\o_\la$ using Gessel and Viennot's technique of dual paths, 
which is, in essence, a combinatorial realisation of the approach of this 
section.

\section{Combinatorial proofs of the Giambelli formulae} 
\label{sec:Giambelli}
Fulmek and Krattenthaler \cite{FulKra97} provided combinatorial proofs of the  Giambelli identities for ordinary symplectic and odd orthogonal characters based on Stembridge's proof of the Giambelli identity for ordinary Schur functions \cite{Ste90}. We adapt their ideas in order to prove combinatorially the respective skew Giambelli identities analogous to formula \eqref{eq:ordinarySkewGiambelli}. By means of a sign-reversing involution similar to the one in Section~\ref{sec:dualJacobiTrudi_o}, we are also able to provide a combinatorial proof of the Giambelli identity for skew even orthogonal characters.

In Section~\ref{sec:tableaux}, we obtained the lattice path models (families of $(n,m)$-symplectic/odd orthogonal/even orthogonal paths) that we used to prove the dual Jacobi--Trudi formulae by reading off the corresponding tableaux columnwise. Ordinary Giambelli-type identities express characters of arbitrary shapes as determinants whose entries are characters of hook shapes. That observation suggests to read off tableaux hookwise.

Consider the Young diagram of shape $\lambda/\mu$ for two partitions $\mu \subseteq \lambda$. The \defn{hook} at position $(i,j)$ consists of the cells
\begin{itemize}
	\item $(i,j) \cup \{(i,k) : j+1 \leq k \leq \lambda_i \} \cup \{(k,j) : i+1 \leq k \leq \lambda'_j \}$ if $(i,j) \in \lambda/\mu$ and
	\item $\{(i,k) : \mu_i + 1 \leq k \leq \lambda_i \} \cup \{(k,j) : \mu'_j + 1 \leq k \leq \lambda'_j \}$ if $(i,j) \notin \lambda/\mu$.
\end{itemize}
In other words, the hook at a position $(i,j)$ is the union of the cell $(i,j)$ (the \defn{corner} of the hook) together with the cells to the right of it in the $i$-th row (the \defn{arm} of the hook) and with the cells below of it in the $j$-th column (the \defn{leg} of the hook) provided that $(i,j)$ is part of the Young diagram of shape $\lambda/\mu$. If $(i,j)$ is not an element of the skew shape~$\lambda/\mu$, that is, $(i,j) \notin \lambda/\mu$, then the hook is not connected and we call it \defn{broken}. Figure~\ref{fig:hooks} illustrates the two different types of hooks.

\begin{figure}[htb]
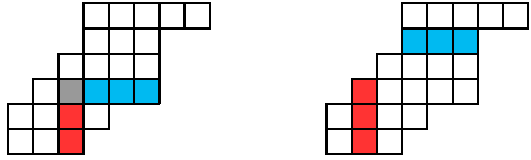

	\centering
	\ytableausetup{smalltableaux}
	\begin{ytableau}
	\none & \none & \none & \empty & \empty & \empty & \empty & \empty \\
	\none & \none & \none & \empty & \empty & \empty \\
	\none & \none & \empty & \empty & \empty & \empty \\
	\none & \empty & *(gray!70)\bullet & *(cyan!70)A & *(cyan!70)A & *(cyan!70)A \\
	\empty & \empty & *(red!70)L & \empty \\
	\empty & \empty & *(red!70)L
	\end{ytableau}
	\qquad\qquad
	\begin{ytableau}
	\none & \none & \none & \empty & \empty & \empty & \empty & \empty \\
	\none & \none[\scriptstyle{\bullet}] & \none & *(cyan!70)A & *(cyan!70)A & *(cyan!70)A\\
	\none & \none & \empty & \empty & \empty & \empty \\
	\none & *(red!70)L & \empty & \empty & \empty & \empty \\
	\empty & *(red!70)L & \empty & \empty \\
	\empty & *(red!70)L & \empty
	\end{ytableau}
	\ytableausetup{nosmalltableaux}
	\caption{A Young diagram of shape $(8,6,6,6,4,3)/(3,3,2,1)$ with the hook at position $(4,3)$ (left) and with the broken hook at position $(2,2)$ (right) marked. The arms of the hooks are shaded blue with inscribed $A$'s and the legs shaded red with inscribed $L$'s.}
	\label{fig:hooks}
\end{figure}

Next, we introduce the set-up common to all three considered models for the lattice paths that we obtain by reading off tableaux hookwise; for the specific details of each model see the following subsections. Let $\lambda=(\alpha_1,\dots,\alpha_p \vert \beta_1,\dots,\beta_p)$ and $\mu=(\gamma_1,\dots,\allowbreak \gamma_q \vert \allowbreak \delta_1,\dots,\delta_q)$ be two partitions $\mu \subseteq \lambda$ with $l(\mu)\leq m$ and $l(\la)\leq n+m$. Define $A_i \coloneqq (-\alpha_i,2n+2m-1)$ and $B_i \coloneqq (\beta_i +1,2n+2m-\beta_i-1)$ for $1 \leq i \leq p$ and $C_i \coloneqq (-\gamma_i,2m)$ and $D_i \coloneqq (\delta_i +1,2m-\delta_i-1)$ for $1 \leq i \leq q$.

We will associate lattice paths corresponding to the \defn{principal hooks} of the skew tableaux: the $i$-th principal hook is the hook at the diagonal position $(i,i)$. For $q+1 \leq i \leq p$, the paths from $A_i$ to $B_i$ correspond to the $i$-th principal hook read from right to left and from top to bottom. For $1 \leq i \leq q$, the $i$-th principal hook is broken. The paths from $A_i$ to $C_i$ correspond to the arm of the $i$-th principal hook read from right to left, whereas the paths from $D_i$ to $B_i$ correspond to the leg of the $i$-th principal hook read from top to bottom. 

The lattice paths fulfil the following properties:
\begin{itemize}
	\item In the region $\{(x,y)\in\mathbb{Z}^2 : x \leq 0\}$, lattice paths stay weakly above the line~$x=2m$ and horizontal unit steps $(i,2m+j) \to (i+1,2m+j)$ get the weight~$x_{(j+2)/2}^{-1}$ if $j$ is even and $x_{(j+1)/2}$ if $j$ is odd. 
	\item In the region $\{ (x,y)\in\mathbb{Z}^2 : x \geq 0\}$, lattice paths stay weakly above the lines $y=-x+2m$ and $y=x-1$ and horizontal unit steps $(i,2m+j) \to (i+1,2m+j)$ are assigned the weight~$x_{(i+j+2)/2}^{-1}$ if $i+j$ is even and $x_{(i+j+1)/2}$ if $i+j$ is odd.
\end{itemize}
	All other steps have weight~$1$, and the weight of a family of paths is the product of the weights of all its steps. Note that this set-up constitutes a combined $e$-  and $h$-labelling: We have an $e$-labelling in the region $x > 0$ for the legs of the hooks and an $h$-labelling in the region $x \leq 0$ for the arms and the corners of the hooks. The exact lattice path model for each of the skew characters is specified in the respective section.

A crucial observation will be be the following: For each of the different lattice path models, the families of $p+q$ lattice paths from $\{ A_1,\dots,A_p,D_1,\dots,D_q\}$ to $\{ B_1,\dots,B_p, \allowbreak C_1, \dots,C_q\}$ are strongly non-intersecting if and only if we have paths from $A_i$ to $C_i$ as well as from $D_i$ to $B_i$ for $1 \leq i \leq q$ and from $A_i$ to $B_i$ for $q+1 \leq i \leq p$. By applying the Lindstr\"{o}m--Gessel--Viennot Lemma~\ref{lem:LGV} and --- in the case of skew even orthogonal characters --- by a sign-reversing involution, we will show that
\begin{equation}\label{eq:GiambelliGesselViennotDet}
	(-1)^q \det \begin{pmatrix}
		(\mathcal{P} (A_i \to B_j))_{1 \leq i,j \leq p} & ( \mathcal{P} (A_i \to C_j))_{\substack{1 \leq i \leq p,\\1 \leq j \leq q}}\\
		( \mathcal{P} (D_i \to B_j))_{\substack{1 \leq i \leq q,\\1 \leq j \leq p}} & ( \mathcal{P} (D_i \to C_j) )_{1 \leq i,j \leq q}
	\end{pmatrix}
\end{equation}
yields the respective Giambelli-type formulae, where $(-1)^q$ is the sign of the permutation
\begin{equation*}
	\begin{pmatrix}
		1 & \cdots & q & q+1 & \cdots & p & p+1 & \cdots & p+q\\
		p+1 & \cdots & p+q & q+1 & \cdots & p & 1 & \cdots & q
	\end{pmatrix}.
\end{equation*}
The sign can be readily computed since the permutation is equal to 
\begin{equation*}
	(1 \quad p+1) (2 \quad p+2) \cdots (q \quad p+q)
\end{equation*}
in cycle notation, which is a product of exactly $q$ transpositions.

In the following three subsections, we prove Theorem~\ref{thm:giambelli}.

\subsection{Proof of the skew symplectic Giambelli identity~(\ref{Eq_spgiambelli})}

A skew $(n,m)$-symplectic tableau is encoded by non-intersecting lattice paths from $A_i$ to $C_i$ and from $D_i$ to $B_i$ for $1 \leq i \leq q$ as well as from $A_i$ to $B_i$ for $q+1 \leq i \leq p$ with step set $\{ (1,0),(0,-1)\}$ in the region $x \leq 0$ and step set $\{ (1,0),(0,1)\}$ in the region $x \geq 0$ that stay all weakly above the line $y=x-1$; see Figure~\ref{fig:SkewSymplecticGiambelli} for an example.
\begin{figure}[htb]
	\centering
	\begin{ytableau}
		\none & \none & \none & 1 & \ov{3} & \ov{4}\\
		\none & \none & \ov{1} & 2 & 3\\
		\ov{1} & \ov{2} & 2 & 3 \\
		2 & \ov{3} & \ov{4} & \ov{4} \\
		\ov{3} 
	\end{ytableau}
	\qquad\qquad
	\begin{tikzpicture}[scale=.5,baseline=(current bounding box.center)]
		\draw [help lines,step=1cm,lightgray] (-5.75,-.75) grid (5.75,11.75);
		
		\draw[very thick, red] (0.25,-.75) -- (5.75,4.75);
		
		\filldraw[black] (-2,4) circle (3pt);
		\filldraw[black] (0,4) circle (3pt);
		\filldraw[black] (2,2) circle (3pt);
		\filldraw[black] (1,3) circle (3pt);
		\filldraw[black] (-5,11) circle (3pt);
		\filldraw[black] (-3,11) circle (3pt);
		\filldraw[black] (-1,11) circle (3pt);
		\filldraw[black] (0,11) circle (3pt);
		\filldraw[black] (1,11) circle (3pt);
		\filldraw[black] (2,10) circle (3pt);
		\filldraw[black] (3,9) circle (3pt);
		\filldraw[black] (5,7) circle (3pt);
		
		\draw[very thick] (-5,11) node[above]{$A_1$} -- (-5,10) --node[above]{$\ov{4}$} (-4,10) -- (-4,8) --node[above]{$\ov{3}$} (-3,8) -- (-3,5) --node[above]{$1$} (-2,5) -- (-2,4) node[below]{$C_1$};
		\draw[very thick] (-3,11) node[above]{$A_2$} -- (-3,9) --node[above]{$3$} (-2,9) -- (-2,7) --node[above]{$2$} (-1,7) -- (-1,4) --node[above]{$\ov{1}$} (0,4) node[below]{$C_2$};
		\draw[very thick] (-1,11) node[above]{$A_3$} -- (-1,9) --node[above]{$3$} (0,9) --(0,7) --node[above]{$2$} (1,7) -- (1,9) --node[above]{$\ov{4}$} (2,9) -- (2,10) node[above]{$B_3$};
		\draw[very thick] (0,11) node[above]{$A_4$} -- (0,10) --node[above]{$\ov{4}$} (1,10) -- (1,11) node[above]{$B_4$};
		\draw[very thick] (1,3) node[below left]{$D_2$} -- (1,5) --node[above]{$\ov{2}$} (2,5) -- (2,6) --node[above]{$\ov{3}$} (3,6) -- (3,9) node[above]{$B_2$};
		\draw[very thick] (2,2) node[below left]{$D_1$} --node[above]{$\ov{1}$} (3,2) -- (3,4) --node[above]{$2$} (4,4) --node[above]{$\ov{3}$} (5,4) -- (5,7) node[above]{$B_1$};
		
		\filldraw[red] (0,0) circle (3pt) node[left]{\color{black}$(0,0)$};
	\end{tikzpicture}
	\caption{A skew $(4,2)$-symplectic tableau of shape $(5,3,1,0 \vert 4,2,1,0)/\allowbreak (2,0 \vert 1,0)$ (left) and its associated family of non-intersecting lattice paths (right).}
	\label{fig:SkewSymplecticGiambelli}
\end{figure}
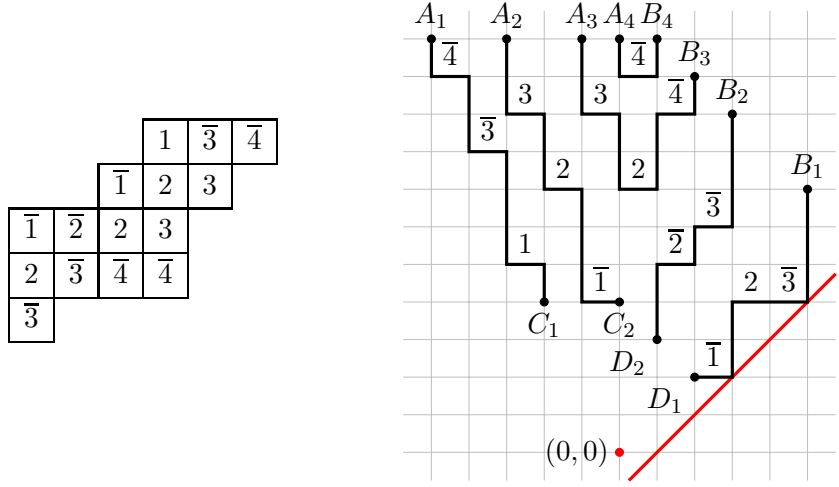
Given a skew shape~$\la/\mu$, the generating function 
$\sp_{\la/\mu}^m(\x)$ is thus the sum over all families of $p+q$ strongly 
non-intersecting lattice paths with starting points $\{ A_1,\dots,A_p,D_1,\dots,D_q\}$ and endpoints $\{ B_1,\dots,B_p,C_1,\dots,C_q\}$
such that $A_i$ is connected to $C_i$ and $D_i$ to $B_i$ for 
$1 \leq i \leq q$ as well as having 
$A_i$ connected to $B_i$ for $q+1 \leq i \leq p$.
Applying the Lindstr\"{o}m--Gessel--Viennot Lemma~\ref{lem:LGV} implies that $\sp^m_{\lambda/\mu}(\x)$ is given by \eqref{eq:GiambelliGesselViennotDet}. There are clearly no lattice paths from any $D_i$ to $C_j$, hence $\mathcal{P} (D_i \to C_j)=0$. Moreover, the paths from $A_i$ to $C_j$ correspond to complete homogeneous symmetric functions, that is, $\mathcal{P} (A_i \to C_j)=h_{\alpha_i-\gamma_j}(\x^\pm)=\sp_{(\alpha_i)/(\gamma_j)}^m(\x)$. Regarding the paths from $D_i$ to $B_j$, we have to keep in mind that no lattice paths are allowed to touch or cross the line $y=x-2$. Hence, Lemma~\ref{lem:SymplecticPaths} implies that $\mathcal{P} (D_i \to B_j) = e_{\beta_j - \delta_i}(\x^\pm) - e_{\beta_j + \delta_i - 2m}(\x^\pm)$. Finally, the paths from $A_i$ to $B_j$ correspond to skew $m$-symplectic characters indexed by hooks, so $\mathcal{P} (A_i \to B_j) = \sp^m_{(\alpha_i \vert \beta_j)}$.

\subsection{Proof of the skew odd orthogonal Giambelli identity~(\ref{Eq_sogiambelli})}

In the case of skew $m$-odd orthogonal characters $\so^m_{\lambda/\mu}$, we consider similar lattice paths as in the previous section with the only addition that we also allow diagonal steps $(1,1)$ along the line $y=x$. These steps correspond to the entries~$\wh{i}$ in the $(m+i)$-th row of the first column of the skew $(n,m)$-odd orthogonal tableau as before and are equipped with the weight~$1$. See Figure~\ref{fig:SkewSpecialOrthogonalGiambelli} for an example.
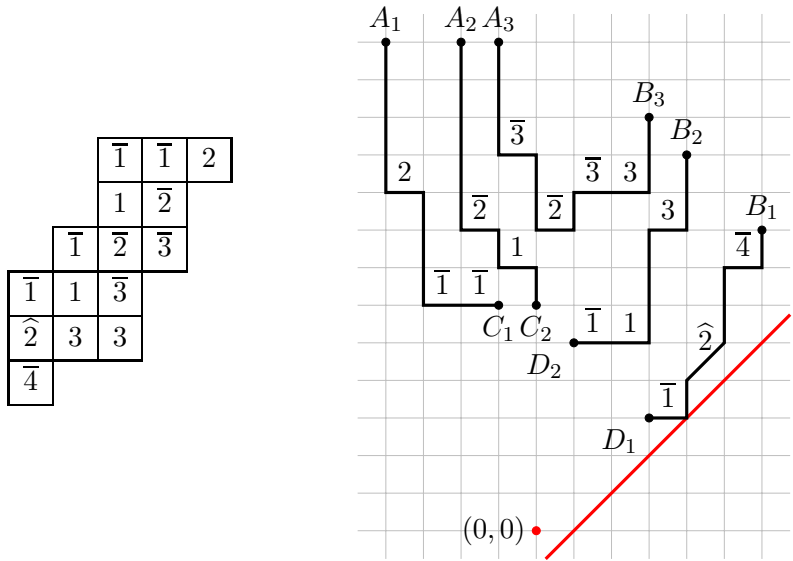
\begin{figure}[htb]
	\centering
	\begin{ytableau}
		\none & \none & \ov{1} & \ov{1} & 2\\
		\none & \none & 1 & \ov{2} \\
		\none & \ov{1} & \ov{2} & \ov{3} \\
		\ov{1} & 1 & \ov{3} \\
		\wh{2} & 3 & 3 \\
		\ov{4} 
	\end{ytableau}
	\qquad\qquad
	\begin{tikzpicture}[scale=.5,baseline=(current bounding box.center)]
		\draw [help lines,step=1cm,lightgray] (-4.75,-.75) grid (6.75,13.75);
		
		\draw[very thick, red] (0.25,-.75) -- (6.75,5.75);
		
		\filldraw[black] (-1,6) circle (3pt);
		\filldraw[black] (0,6) circle (3pt);
		\filldraw[black] (1,5) circle (3pt);
		\filldraw[black] (3,3) circle (3pt);
		\filldraw[black] (-4,13) circle (3pt);
		\filldraw[black] (-2,13) circle (3pt);
		\filldraw[black] (-1,13) circle (3pt);
		\filldraw[black] (3,11) circle (3pt);
		\filldraw[black] (4,10) circle (3pt);
		\filldraw[black] (6,8) circle (3pt);
		
		\draw[very thick] (-4,13) node[above]{$A_1$} -- (-4,9) --node[above]{$2$} (-3,9) -- (-3,6) --node[above]{$\ov{1}$} (-2,6) --node[above]{$\ov{1}$} (-1,6) node[below]{$C_1$};
		\draw[very thick] (-2,13) node[above]{$A_2$} -- (-2,8) --node[above]{$\ov{2}$} (-1,8) -- (-1,7) --node[above]{$1$} (0,7) -- (0,6) node[below]{$C_2$};
		\draw[very thick] (-1,13) node[above]{$A_3$} -- (-1,10) --node[above]{$\ov{3}$} (0,10) -- (0,8) --node[above]{$\ov{2}$} (1,8) -- (1,9) --node[above]{$\ov{3}$} (2,9) --node[above]{$3$} (3,9) -- (3,11) node[above]{$B_3$};
		\draw[very thick] (1,5) node[below left]{$D_2$} --node[above]{$\ov{1}$} (2,5) --node[above]{$1$} (3,5) -- (3,8) --node[above]{$3$} (4,8) -- (4,10) node[above]{$B_2$};
		\draw[very thick] (3,3) node[below left]{$D_1$} --node[above]{$\ov{1}$} (4,3) -- (4,4) --node[above]{$\wh{2}$} (5,5) -- (5,7) --node[above]{$\ov{4}$} (6,7) -- (6,8) node[above]{$B_1$};
		
		\filldraw[red] (0,0) circle (3pt) node[left]{\color{black}$(0,0)$};
	\end{tikzpicture}
	\caption{A skew $(4,3)$-odd orthogonal tableau of shape $(4,2,1 \vert 5,3,2)/\allowbreak (1,0 \vert 2,0)$ (left) and its associated family of non-intersecting lattice paths (right).}
	\label{fig:SkewSpecialOrthogonalGiambelli}
\end{figure}
	As seen before, it suffices to compute the determinantal expression~\eqref{eq:GiambelliGesselViennotDet} in our setting. The expressions for $\mathcal{P} (D_i \to C_j)$ and $\mathcal{P} (A_i \to C_j)$ are the same as in the symplectic case, so $\mathcal{P} (D_i \to C_j) = 0$ and $\mathcal{P} (A_i \to C_j) = h_{\alpha_i - \gamma_j}(\x^\pm)=\so_{(\alpha_i)/(\gamma_j)}^m(\x)$. The entry $\mathcal{P} (D_i \to B_j) = e_{\beta_j - \delta_i}(\x^\pm) + e_{\beta_j + \delta_i -2m+1}(\x^\pm)=\so_{(1^{\beta_j+1})/(1^{\delta_i+1})}^m(\x)$ is a simple consequence of Corollary~\ref{cor:SpecialOrthogonalPaths}, and $\mathcal{P} (A_i \to B_j)$ corresponds to the skew $m$-odd orthogonal character $\so^m_{( \alpha_i \vert \beta_j)}(\x)$.

\subsection{Proof of the skew even orthogonal Giambelli identity~(\ref{Eq_ogiambelli})}

Now, we consider lattice paths in our general set-up with step set $\{ (1,0),(0,-1)\}$ in the region $x \leq 0$ and step set $\{ (1,0),(0,1),(2,0)\}$ in the region $x \geq 0$ such that all lattice paths stay weakly above the line $y=x-1$ and $\o$-horizontal steps are only allowed if they end on the line $y=x$. As before, we interpret the entries of a skew $(n,m)$-even orthogonal tableau of shape $\lambda/\mu$ hookwise as lattice paths. Yet again, consecutive entries $\wc{i}$ and $\wh{i}$ in the first column are interpreted as an $\o$-horizontal step with weight~$1$ drawn as an arch. As a result, we obtain a family of $p+q$ strongly non-intersecting lattice paths, where $\lambda=(\alpha_1,\dots,\alpha_p \vert \beta_1,\dots,\beta_p)$ and $\mu=(\gamma_1,\dots,\gamma_q \vert \delta_1,\dots,\delta_q)$. See Figure~\ref{fig:SkewOrthogonalGiambelli} for an example.
\begin{figure}[htb]
	\centering
	\begin{ytableau}
		\none & \none & \none & \none & \ov{1} \\
		\none & \none & 1 & 1 & 2 \\
		\wc{2} & \ov{2} & 2 & \ov{3} & \ov{4} \\
		\wh{2} & \ov{3} & \ov{3} & 3 \\
		\ov{3} & 3 & \ov{4} & 4 \\
		\ov{5} & 5 \\
		5 
	\end{ytableau}
	\qquad\qquad
	\begin{tikzpicture}[scale=.5,baseline=(current bounding box.center)]
		\draw [help lines,step=1cm,lightgray] (-4.75,-.75) grid (7.75,13.75);
		
		\draw[very thick, red] (0.25,-.75) -- (7.75,6.75);
		
		\filldraw[black] (-3,4) circle (3pt);
		\filldraw[black] (0,4) circle (3pt);
		\filldraw[black] (2,2) circle (3pt);
		\filldraw[black] (1,3) circle (3pt);
		\filldraw[black] (7,7) circle (3pt);
		\filldraw[black] (5,9) circle (3pt);
		\filldraw[black] (3,11) circle (3pt);
		\filldraw[black] (2,12) circle (3pt);
		\filldraw[black] (0,13) circle (3pt);
		\filldraw[black] (-2,13) circle (3pt);
		\filldraw[black] (-3,13) circle (3pt);
		\filldraw[black] (-4,13) circle (3pt);

		\draw[very thick] (-4,13) node[above]{$A_1$} -- (-4,4) --node[above]{$\ov{1}$} (-3,4) node[below]{$C_1$};
		\draw[very thick] (-3,13) node[above]{$A_2$} -- (-3,7) --node[above]{$2$} (-2,7) -- (-2,5) --node[above]{$1$} (-1,5) --node[above]{$1$} (0,5) -- (0,4) node[below]{$C_2$};
		\draw[very thick] (-2,13) node[above]{$A_3$} -- (-2,10) --node[above]{$\ov{4}$} (-1,10) -- (-1,8) --node[above]{$\ov{3}$} (0,8) -- (0,7) --node[above]{$2$} (1,7) --node[above]{$\ov{3}$} (2,7) -- (2,8) --node[above]{$\ov{4}$} (3,8) -- (3,11) node[above]{$B_3$};
		\draw[very thick] (0,13) node[above]{$A_4$} -- (0,9) --node[above]{$3$} (1,9) -- (1,10) --node[above]{$4$} (2,10) -- (2,12) node[above]{$B_4$};
		\draw[very thick] (1,3) node[below left]{$D_2$} -- (1,5) --node[above]{$\ov{2}$} (2,5) -- (2,6) --node[above]{$\ov{3}$} (3,6) --node[above]{$3$} (4,6) -- (4,9) --node[above]{$5$} (5,9) node[right]{$B_2$};
		\draw[very thick] (2,2) node[below left]{$D_1$} -- (2,4) to[out=45, in=135] (4,4) --node[above]{$\ov{3}$} (5,4) -- (5,7) --node[above]{$\ov{5}$} (6,7) --node[above]{$5$} (7,7) node[above right]{$B_1$};
		\node at (2.5,5) {$\wc{2}$};
		\node at (3.5,5) {$\wh{2}$};
		\filldraw[red] (0,0) circle (3pt) node[left]{\color{black}$(0,0)$};
	\end{tikzpicture}
	\caption{A skew $(5,2)$-even orthogonal tableau of shape $(4,3,2,0 \vert \allowbreak 6,4,2,1)/\allowbreak (3,0 \vert 1,0)$ (left) and its associated family of non-intersecting lattice paths (right).}
	\label{fig:SkewOrthogonalGiambelli}
\end{figure}
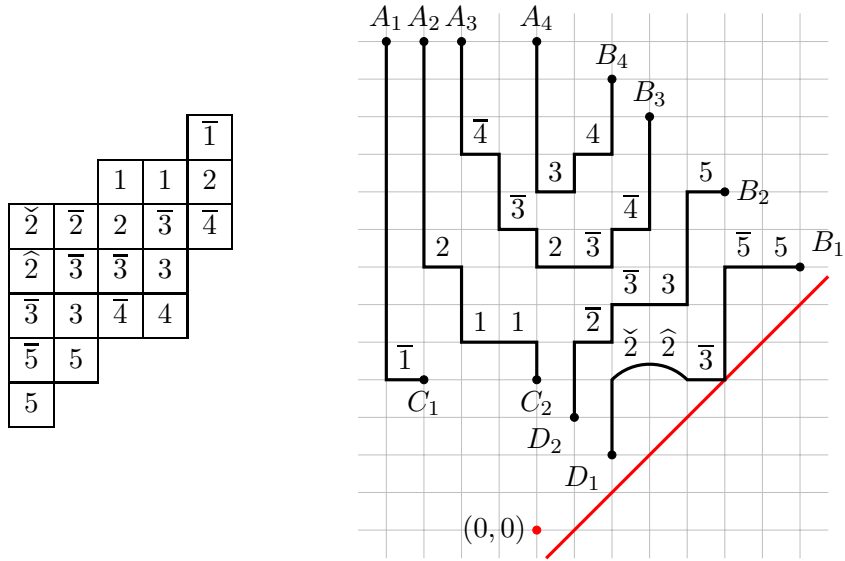
However, not every family of non-intersecting lattice paths in our set-up corresponds to a skew $(n,m)$-even orthogonal tableau. In fact, the $m$-even orthogonal condition implies that the configurations illustrated in Figure~\ref{fig:ForbiddenConfigOrthogonalGiambelli} cannot occur. To be more precise, assume we have a skew $(n,m)$-even orthogonal tableau with entry $\ov{i}$ in the first column of the $(m+i)$-th row and $i$ also appears in the $j$-th column of that row. Then the entry in the same column one row above is supposed to be $\ov{i}$. If that entry at position $(m+i-1,j)$ is not $\ov{i}$, then we obtain one of the three cases in Figure~\ref{fig:ForbiddenConfigOrthogonalGiambelli}; namely (\subref{subfig:config_eside}) if $j<m+i$, (\subref{subfig:config_mixed_ehside}) if $j=m+i$ and (\subref{subfig:config_hside}) if $j>m+i$. In particular, the vacancies indicated by the red points imply that the families of lattice paths are not obtained by skew $(n,m)$-even orthogonal tableaux. Note that case (\subref{subfig:config_eside}) is equivalent to the case of the trapped position in Section~\ref{sec:oYT}. The reason for the other cases is that the trapped position moves from a region with $e$-labelling to a region with $h$-labelling.

The corners of the nested left turns in Figure~\ref{fig:ForbiddenConfigOrthogonalGiambelli} are each lying on the line $y+x=2m+2i-1$; we say that $2m+2i-1$ is the distance between the origin and the trapped positions.

\begin{figure}[htb]
	\centering
	\subcaptionbox{\label{subfig:config_eside}}{
		\begin{tikzpicture}[scale=.5,baseline=(current bounding box.center)]
			\draw [help lines,step=1cm,lightgray] (-1.75,-.75) grid (-.5,4.75);
			\draw [help lines,step=1cm,lightgray] (.5,-.75) grid (5.75,4.75);
			\draw[->,ultra thin] (-1,-.75)--(-1,4.75);
			\draw[very thick, red] (4.25,-.75) -- (5.75,.75); 
			
			\draw[very thick] (4,0) -- (5,0) -- (5,1);
			
			\filldraw[black] (4,1) circle (1pt);
			\filldraw[black] (4.25,.75) circle (1pt);
			\filldraw[black] (3.75,1.25) circle (1pt);
			
			\filldraw[black] (-.25,2) circle (1pt);
			\filldraw[black] (0,2) circle (1pt);
			\filldraw[black] (.25,2) circle (1pt);
			
			\draw[very thick] (2,2) -- (3,2) -- (3,3);
			
			\draw[very thick] (1,3) -- (1,4) -- (2,4);
			
			\filldraw[red] (2,3) circle (3pt);
	\end{tikzpicture}}
	\hfill
	\subcaptionbox{\label{subfig:config_mixed_ehside}}{
		\begin{tikzpicture}[scale=.5,baseline=(current bounding box.center)]
			\draw [help lines,step=1cm,lightgray] (.25,-.75) grid (5.75,4.75);
			\draw[->] (1,-.75)--(1,4.75);
			\draw[very thick,red] (4.25,-.75) -- (5.75,.75); 
			
			\draw[very thick] (4,0) -- (5,0) -- (5,1);
			
			\filldraw[black] (4,1) circle (1pt);
			\filldraw[black] (4.25,.75) circle (1pt);
			\filldraw[black] (3.75,1.25) circle (1pt);
			
			\draw[very thick] (2,2) -- (3,2) -- (3,3);
			
			\draw[very thick] (1,4) -- (2,4);
			
			\filldraw[red] (1,3) circle (3pt);
			\filldraw[red] (2,3) circle (3pt);
	\end{tikzpicture}}
	\hfill
	\subcaptionbox{\label{subfig:config_hside}}{
		\begin{tikzpicture}[scale=.5,baseline=(current bounding box.center)]
			\draw [help lines,step=1cm,lightgray] (-4.75,-.75) grid (-2.5,4.75);
			\draw [help lines,step=1cm,lightgray] (-1.5,-.75) grid (-.5,4.75);
			\draw [help lines,step=1cm,lightgray] (.5,-.75) grid (5.75,4.75);
			\draw[->] (-1,-.75)--(-1,4.75);
			\draw[very thick, red] (4.25,-.75) -- (5.75,.75); 
			
			\draw[very thick] (4,0) -- (5,0) -- (5,1);
			
			\filldraw[black] (-2.25,2) circle (1pt);
			\filldraw[black] (-2,2) circle (1pt);
			\filldraw[black] (-1.75,2) circle (1pt);
			
			\filldraw[black] (-.25,2) circle (1pt);
			\filldraw[black] (0,2) circle (1pt);
			\filldraw[black] (.25,2) circle (1pt);
			
			\filldraw[black] (4,1) circle (1pt);
			\filldraw[black] (4.25,.75) circle (1pt);
			\filldraw[black] (3.75,1.25) circle (1pt);
			
			\draw[very thick] (2,2) -- (3,2) -- (3,3);
			
			\draw[very thick] (.25,3) -- (1,3) -- (2,3) -- (2,4);	
			
			\draw[very thick]  (-4,4) -- (-3,4) -- (-3,3) --++ (0.75,0);
			\draw[very thick] (-1.75,3) -- (-.25,3);
			
			\filldraw[red] (-4,3) circle (3pt);
	\end{tikzpicture}}
	\caption{Local configurations of trapped positions at $(m+i-1,j)$ if (a) $j<m+i$, (b) $j=m+1$ or (c) $j>m+1$. The large points indicate vacancies.}
	\label{fig:ForbiddenConfigOrthogonalGiambelli}
\end{figure}
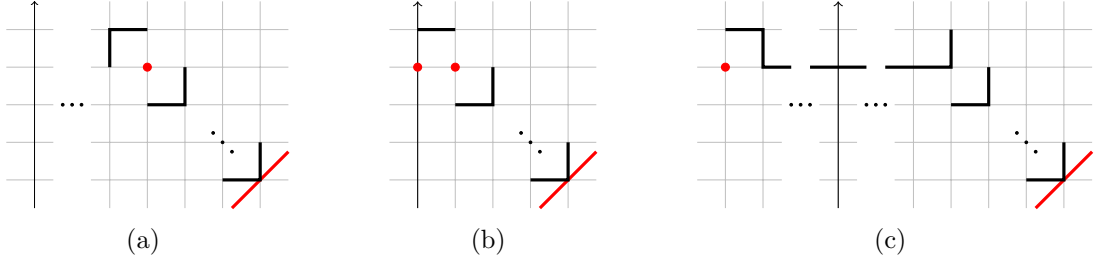

As in the cases before, we want to apply the Lindstr\"om--Gessel--Viennot Lemma~\ref{lem:LGV}. The evaluation of the determinant~\eqref{eq:GiambelliGesselViennotDet} in the current set-up yields
\[
	\frac{(-1)^q}{2^{[m=l(\mu)\wedge m\neq0]}} \det \begin{pmatrix}
		( \o^m_{( \alpha_i \vert \beta_j)} (\x))_{1 \leq i,j \leq p} & ( h_{\alpha_i - \gamma_j} (\x^\pm))_{\substack{1 \leq i \leq p,\\1 \leq j \leq q}}\\
		( e_{\beta_j - \delta_i} (\x^\pm) + e_{\beta_j + \delta_i -2m+2} (\x^\pm))_{\substack{1 \leq i \leq q,\\1 \leq j \leq p}} & 0
	\end{pmatrix},
\]
which, by rewriting the entries in terms of the even orthogonal characters, is equal to \eqref{Eq_ogiambelli}.
However, a priori this is not the generating function of skew $(n,m)$-even orthogonal tableaux since it also enumerates the trapped positions in Figure~\ref{fig:ForbiddenConfigOrthogonalGiambelli} as well as families of weakly but not strongly non-intersecting lattice paths with intersections of $\o$-horizontal steps with vertical steps along the line $y=x+1$. We provide a sign-reversing involution under which these families of lattice paths cancel out, ultimately showing that the above is indeed equal to $\o^m_{\lambda/\mu}(\x)$.

	Consider the families of lattice paths enumerated by \eqref{Eq_ogiambelli} as described above. Assume a given family of lattice paths has intersections involving $\o$-horizontal steps or trapped positions that we have specified above. If an intersection lies on the line $y+x=d$, then we say that $d$ is the distance between the origin and that intersection. We take the unique trapped position or intersection with the smallest distance to the origin and perform the corresponding local changes shown in Figure~\ref{fig:OrthogonalPathsLocalChanges_Giambelli}; the rest of the paths are left unchanged. Thus, the weight of the family of lattice paths is exactly changed by a factor of $-1$.
	
	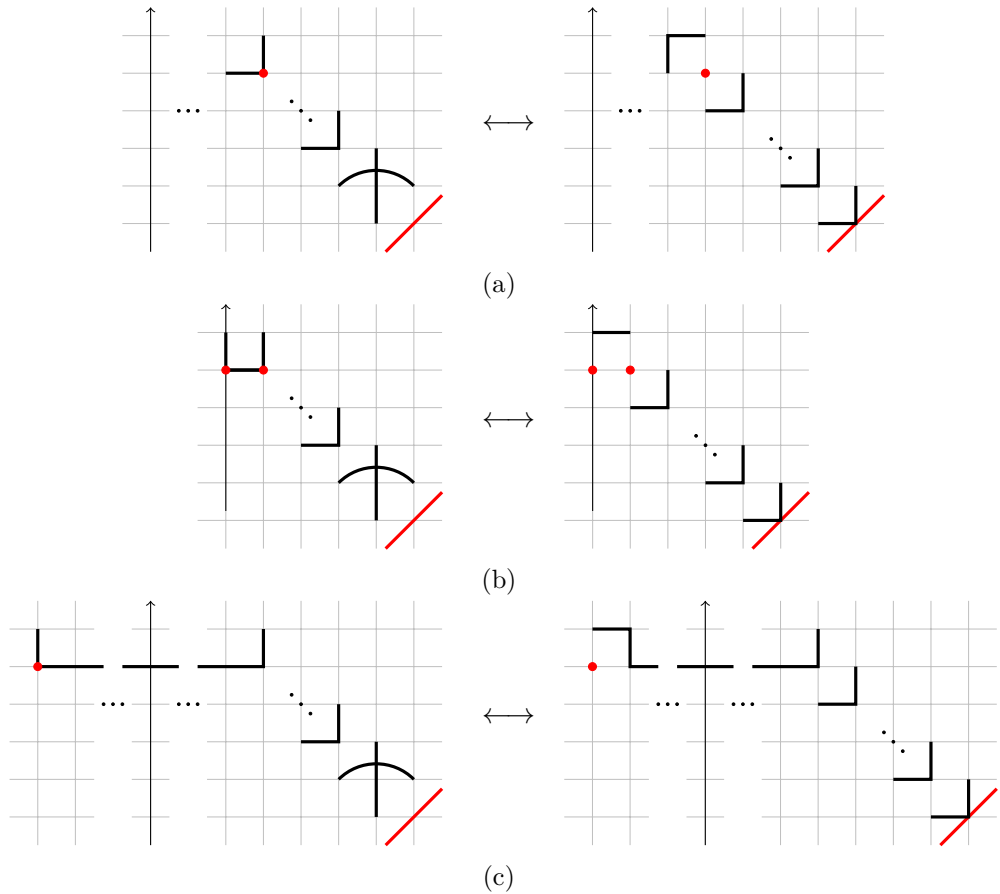
\begin{figure}[htb]
		\centering
		\subcaptionbox{\label{subfig:LocalChanges_eside}}{
			\begin{tikzpicture}[scale=.5,baseline=(current bounding box.center)]
				\draw [help lines,step=1cm,lightgray] (-1.75,-1.75) grid (-.5,4.75);
				\draw [help lines,step=1cm,lightgray] (.5,-1.75) grid (6.75,4.75);
				\draw[->] (-1,-1.75)--(-1,4.75);
				\draw[very thick, red] (5.25,-1.75) -- (6.75,-.25); 
				
				\filldraw[black] (3,2) circle (1pt);
				\filldraw[black] (3.25,1.75) circle (1pt);
				\filldraw[black] (2.75,2.25) circle (1pt);
				
				\filldraw[black] (-.25,2) circle (1pt);
				\filldraw[black] (0,2) circle (1pt);
				\filldraw[black] (.25,2) circle (1pt);
				
				\draw[very thick] (1,3) -- (2,3) -- (2,4);
				\draw[very thick] (3,1) -- (4,1) -- (4,2);
				
				\draw[very thick] (4,0) to[out=45, in=135] (6,0);
				
				\draw[very thick] (5,-1) -- (5,1);
				\filldraw[red] (2,3) circle (3pt);
			\end{tikzpicture}
			\quad$\longleftrightarrow$\quad
			\begin{tikzpicture}[scale=.5,baseline=(current bounding box.center)]
				\draw [help lines,step=1cm,lightgray] (-1.75,-1.75) grid (-.5,4.75);
				\draw [help lines,step=1cm,lightgray] (.5,-1.75) grid (6.75,4.75);
				\draw[->] (-1,-1.75)--(-1,4.75);
				\draw[very thick, red] (5.25,-1.75) -- (6.75,-.25); 
				
				\draw[very thick] (4,0) -- (5,0) -- (5,1);
				
				\filldraw[black] (4,1) circle (1pt);
				\filldraw[black] (4.25,.75) circle (1pt);
				\filldraw[black] (3.75,1.25) circle (1pt);
				
				\filldraw[black] (-.25,2) circle (1pt);
				\filldraw[black] (0,2) circle (1pt);
				\filldraw[black] (.25,2) circle (1pt);
				
				\draw[very thick] (2,2) -- (3,2) -- (3,3);
				
				\draw[very thick] (1,3) -- (1,4) -- (2,4);
				\draw[very thick] (5,-1) -- (6,-1) -- (6,0);				
				
				\filldraw[red] (2,3) circle (3pt);
		\end{tikzpicture}}
		
		\subcaptionbox{\label{subfig:LocalChanges_mixed_ehside}}{
			\begin{tikzpicture}[scale=.5,baseline=(current bounding box.center)]
				\draw [help lines,step=1cm,lightgray] (.25,-1.75) grid (6.75,4.75);
				\draw[->] (1,-.75)--(1,4.75);
				\draw[very thick, red] (5.25,-1.75) -- (6.75,-.25); 
				
				\filldraw[black] (3,2) circle (1pt);
				\filldraw[black] (3.25,1.75) circle (1pt);
				\filldraw[black] (2.75,2.25) circle (1pt);
				
				\draw[very thick] (1,3) -- (2,3) -- (2,4);
				\draw[very thick] (3,1) -- (4,1) -- (4,2);
				
				\draw[very thick] (4,0) to[out=45, in=135] (6,0);
				
				\draw[very thick] (5,-1) -- (5,1);
				
				\draw[very thick] (1,4) -- (1,3) -- (2,3) -- (2,4);
				\filldraw[red] (1,3) circle (3pt);
				\filldraw[red] (2,3) circle (3pt);
			\end{tikzpicture}
			\quad$\longleftrightarrow$\quad
			\begin{tikzpicture}[scale=.5,baseline=(current bounding box.center)]
				\draw [help lines,step=1cm,lightgray] (.25,-1.75) grid (6.75,4.75);
				\draw[->] (1,-.75)--(1,4.75);
				\draw[very thick, red] (5.25,-1.75) -- (6.75,-.25); 
				
				\draw[very thick] (4,0) -- (5,0) -- (5,1);
				
				\filldraw[black] (4,1) circle (1pt);
				\filldraw[black] (4.25,.75) circle (1pt);
				\filldraw[black] (3.75,1.25) circle (1pt);
				
				\draw[very thick] (2,2) -- (3,2) -- (3,3);
				
				\draw[very thick] (1,4) -- (2,4);
				\draw[very thick] (5,-1) -- (6,-1) -- (6,0);	
				\filldraw[red] (1,3) circle (3pt);
				\filldraw[red] (2,3) circle (3pt);
		\end{tikzpicture}}
		
		\subcaptionbox{\label{subfig:LocalChanges_hside}}{
			\begin{tikzpicture}[scale=.5,baseline=(current bounding box.center)]
				\draw [help lines,step=1cm,lightgray] (-4.75,-1.75) grid (-2.5,4.75);
				\draw [help lines,step=1cm,lightgray] (-1.5,-1.75) grid (-.5,4.75);
				\draw [help lines,step=1cm,lightgray] (.5,-1.75) grid (6.75,4.75);
				\draw[->] (-1,-1.75)--(-1,4.75);
				\draw[very thick, red] (5.25,-1.75) -- (6.75,-.25); 
				
				\filldraw[black] (-2.25,2) circle (1pt);
				\filldraw[black] (-2,2) circle (1pt);
				\filldraw[black] (-1.75,2) circle (1pt);
				
				\filldraw[black] (-.25,2) circle (1pt);
				\filldraw[black] (0,2) circle (1pt);
				\filldraw[black] (.25,2) circle (1pt);
				
				\filldraw[black] (3,2) circle (1pt);
				\filldraw[black] (3.25,1.75) circle (1pt);
				\filldraw[black] (2.75,2.25) circle (1pt);
				
				\draw[very thick] (.25,3) -- (1,3) -- (2,3) -- (2,4);	
				
				\draw[very thick]  (-4,4) -- (-4,3) -- (-3,3) --++ (0.75,0);
				\draw[very thick] (-1.75,3) -- (-.25,3);
				
				\draw[very thick] (3,1) -- (4,1) -- (4,2);
				
				\draw[very thick] (4,0) to[out=45, in=135] (6,0);
				
				\draw[very thick] (5,-1) -- (5,1);
				\filldraw[red] (-4,3) circle (3pt);
			\end{tikzpicture}
			\quad$\longleftrightarrow$\quad
			\begin{tikzpicture}[scale=.5,baseline=(current bounding box.center)]
				\draw [help lines,step=1cm,lightgray] (-4.75,-1.75) grid (-2.5,4.75);
				\draw [help lines,step=1cm,lightgray] (-1.5,-1.75) grid (-.5,4.75);
				\draw [help lines,step=1cm,lightgray] (.5,-1.75) grid (6.75,4.75);
				\draw[->] (-1,-1.75)--(-1,4.75);
				\draw[very thick, red] (5.25,-1.75) -- (6.75,-.25); 
				
				\draw[very thick] (4,0) -- (5,0) -- (5,1);
				
				\filldraw[black] (-2.25,2) circle (1pt);
				\filldraw[black] (-2,2) circle (1pt);
				\filldraw[black] (-1.75,2) circle (1pt);
				
				\filldraw[black] (-.25,2) circle (1pt);
				\filldraw[black] (0,2) circle (1pt);
				\filldraw[black] (.25,2) circle (1pt);
				
				\filldraw[black] (4,1) circle (1pt);
				\filldraw[black] (4.25,.75) circle (1pt);
				\filldraw[black] (3.75,1.25) circle (1pt);
				
				\draw[very thick] (2,2) -- (3,2) -- (3,3);
				
				\draw[very thick] (.25,3) -- (1,3) -- (2,3) -- (2,4);	
				
				\draw[very thick]  (-4,4) -- (-3,4) -- (-3,3) --++ (0.75,0);
				\draw[very thick] (-1.75,3) -- (-.25,3);
				
				\draw[very thick] (5,-1) -- (6,-1) -- (6,0);
				\filldraw[red] (-4,3) circle (3pt);
		\end{tikzpicture}}
		\caption{Local changes on the families of lattice paths enumerated by \eqref{Eq_ogiambelli}.}
		\label{fig:OrthogonalPathsLocalChanges_Giambelli}
	\end{figure}
	
	This sign-reversing involution cancels all families of lattice paths that contain one of the trapped positions or an intersection of an $\o$-horizontal step with two vertical steps. We are left with exactly those families of strongly non-intersecting lattice paths that correspond to skew $(n,m)$-even orthogonal tableaux. This completes the proof of Theorem~\ref{thm:giambelli}.

\subsection*{Acknowledgement} 
We thank Ole Warnaar for useful comments on an earlier version of this paper.

\bibliographystyle{amsplain}
\bibliography{skews}

\end{document}